\newcommand{\MD}{\mathcal{D}}
\newcommand{\bCOP}{\partial\text{\rm COP}}
\newcommand{\kzeroc}{\MK^{(0)}\text{-certificate}}
\newcommand{\oN}{{\mathbb N}}
\newcommand{\oR}{{\mathbb R}}
\newcommand\norm[1]{\left\lVert#1\right\rVert}
\newcommand{\ignore}[1]{}
\newcommand{\supp}{\text{\rm Supp}}
\newcommand{\COP}{{\text{\rm COP}}}
\newcommand{\MK}{{\mathcal K}}
\newcommand{\LAS}{{\text{\rm LAS}}}
\newcommand{\MQ}{{\mathcal Q}}
\newcommand{\MS}{{\mathcal S}}
\newcommand{\MC}{{\mathcal C}}
\newcommand{\MM}{{\mathcal M}}
\newcommand{\MT}{{\mathcal T}}
\newcommand{\SPN}{{\text {\rm SPN}}}
\newcommand{\mmod}{\text{\rm mod }}
\newcommand{\las}{\text{\rm las}}
\begin{document}

\title*{Copositive matrices, sums of squares and the stability number of a graph}
\titlerunning{Copositive matrices, sums of squares and stable sets in graphs}
\author{Luis Felipe Vargas and Monique Laurent}
\institute{Luis Felipe Vargas \at  Centrum Wiskunde \& Informatica,  Science Park 123
1098 XG Amsterdam, Netherlands, \email{luis.vargas@cwi.nl}
\and Monique Laurent \at Centrum Wiskunde \& Informatica, Science Park 123
1098 XG Amsterdam, Netherlands, and Tilburg University, Warandelaan 2, 5037 AB Tilburg, Netherlands, \email{m.laurent@cwi.nl}
\\
\\
This work is supported by the European Union's Framework Programme for Research and Innovation Horizon
2020 under the Marie Skłodowska-Curie Actions Grant Agreement No. 813211  (POEMA).}
%
%
\maketitle

\abstract*{Each chapter should be preceded by an abstract (no more than 200 words) that summarizes the content. The abstract will appear \textit{online} at \url{www.SpringerLink.com} and be available with unrestricted access. This allows unregistered users to read the abstract as a teaser for the complete chapter.
Please use the 'starred' version of the \texttt{abstract} command for typesetting the text of the online abstracts (cf. source file of this chapter template \texttt{abstract}) and include them with the source files of your manuscript. Use the plain \texttt{abstract} command if the abstract is also to appear in the printed version of the book.}

\abstract{This chapter investigates the cone of copositive matrices, with a focus on the design and analysis of conic inner approximations for it. These approximations are based on various sufficient conditions for matrix copositivity, relying on positivity certificates in terms of sums of squares of polynomials. Their application to the discrete optimization problem asking for a maximum stable set in a graph is also discussed. A central theme in this chapter is understanding when the conic approximations suffice for describing the full copositive cone, and when the corresponding bounds for the stable set problem admit finite convergence.
}

\section{Introduction} \label{sec:1}

An $n\times n$ symmetric matrix $M$ is said to be {\em copositive} if the associated quadratic form $x^TMx=\sum_{i,j=1}^nM_{ij}x_ix_j$ is nonnegative over the nonnegative orthant $\oR^n_+$.
The set of copositive matrices is a cone, the {\em copositive cone} $\COP_n$, thus defined as 
\begin{align}\label{def-COP}
\COP_n=\{M \in \MS^n : x^TMx\geq 0 \quad \forall x\in \mathbb{R}_+^n\}.
\end{align}
Copositive matrices are a fundamental class of matrices that play an important role in several areas, including linear algebra and combinatorial matrix theory (see the monograph \cite{book-Shaked-Berman}) and optimization
(see, e.g., the overview \cite{Dur-Survey}). Their relevance in optimization is illustrated by the fact that many hard combinatorial optimization problems
can be formulated as linear optimization problems over the copositive cone (see, e.g.,  \cite{Betal,Burer2009,dKP2002,DR10,GL2008}). This is the case, in particular, for the problem of determining the maximum stable set in a graph, a topic that we will discuss in this chapter (see Section \ref{section-alpha}).

Hence the copositive cone has a broad modeling power. As a consequence it is a computationally hard object to work with: linear optimization over $\COP_n$ is an NP-hard problem and checking 
 whether a matrix is copositive is a co-NP-complete problem \cite{MK1987}. Motivated by these hardness results,  several hierarchies of conic inner approximations for $\COP_n$ have been introduced in the literature. A key ingredient in these approximations 
 is to design tractable certificates 
  that permit to certify that the quadratic form $x^TMx$ is nonnegative over $\oR^n_+$ and thus that the matrix $M$ is copositive. 
These certificates are based on using sums of squares of polynomials as a ``proxy" for global nonnegativity, which is motivated by  the fact that sums of squares of polynomials can be modeled using semidefinite optimization (as recalled later in relation (\ref{sossdp})).
  
 Another possible approach to certify copositivity of a matrix $M$ is to consider the quartic form 
\begin{align}\label{eqquartic}
(x^{\circ 2})^TMx^{\circ 2}:=\sum_{i,j=1}^nM_{ij}x_i^2x_j^2
\end{align}
 and to design sum-of-squares based  certificates that certify that  $(x^{\circ 2})^TMx^{\circ 2} $ is nonnegative on the full space $\oR^n$. In other words, one may rely on the following alternative definition of the copositive cone
 \begin{align}\label{def-COP-sq}
\COP_n=\{M\in \MS^n: (x^{\circ 2})^TMx^{\circ 2}\geq 0 \text{ for all } x\in \mathbb{R}^n\},
\end{align} 
where we let $x^{\circ 2}=(x_1^2,\ldots,x_n^2)$ denote the vector of squared variables.  

As we will see in this chapter, these two equivalent definitions (\ref{def-COP}) and (\ref{def-COP-sq}) of the copositive cone offer the starting point for the definition of several hierarchies of conic approximations. Our objective in this chapter is to discuss  the relationships between these various hierarchies, their convergence properties, and their application to the maximum stable set problem in graphs.  We now briefly describe the contents of this chapter.

\subsection*{Organization of the chapter}
 In Section \ref{preliminaries} we introduce some general background about polynomial optimization and  sums of squares of polynomials.  In particular, in Section \ref{seccertificate},  we recall some important positivity certificates  that permit to certify the nonnegativity of a polynomial on  the nonnegative orthant and on compact semialgebraic sets.   In Section \ref{secpop} we describe how these positivity certificates are used to define hierarchies of bounds for polynomial optimization problems and, in Section \ref{secNie}, we recall  a criterion that can be used to detect when the bounds have finite convergence.

\medskip
In Section \ref{secconic} we present several hierarchies of conic inner approximations for the copositive cone $\COP_n$. 
These conic approximations  are based on using different types of positivity certificates for the quadratic form $x^TMx$,  or for  the quartic form $(x^{\circ 2})^TMx^{\circ 2}$ from (\ref{eqquartic}). Moreover, one considers positivity on the full space $\oR^n$, on the nonnegative orthant $\oR^n_+$, on the standard simplex $\Delta_n=\{x\in \oR^n_+: \sum_{i=1}^n x_i=1\}$, or on the unit sphere 
$\mathbb S^{n-1}=\{x\in \oR^n: \sum_{i=1}^n x_i^2=1\}$.

In Section~\ref{secconeK} we introduce the cones $\MC^{(r)}_n$ and $\MK^{(r)}_n$, where, for $\MC^{(r)}_n$, one requires that   the polynomial 
$(\sum_{i=1}^nx_i)^rx^TMx$ has nonnegative coefficients, and, for $\MK^{(r)}_n$, one requires that 
the polynomial $(\sum_{i=1}^nx_i^2)^r (x^{\circ 2})^TMx^{\circ 2}$ is a sum of squares of polynomials. These two conic hierarchies are motivated by the representation results by Reznick (for positive polynomials on $\oR^n$, Theorem \ref{theorem-reznick}) and by P\'olya (for positive polynomials on $\oR^n_+$, Theorem \ref{theorem-polya}).
In addition, the cones $\MQ^{(r)}_n$ are introduced as a simpler, but weaker variation of the cones $\MK^{(r)}_n$.

In Section~\ref{secconeLAS} we introduce the Lasserre-type cones $\LAS^{(r)}_{\Delta_n}$, $\LAS^{(r)}_{\Delta_n,\MT}$ and $\LAS^{(r)}_{\mathbb S^{n-1}}$, where, respectively, one now uses positivity certificates for the polynomial $x^TMx$ on the standard simplex $\Delta_n$ (using representations in the quadratic module or the preordering of $\Delta_n$),  and positivity certificates for the polynomial $(x^{\circ 2})^TMx^{\circ 2}$ on the unit sphere $\mathbb S^{n-1}$. The motivation for these cones now stems from the representation results by Schm\"udgen (Theorem \ref{theorem-schmudgen}) and by Putinar (Theorem \ref{theorem-putinar}).

In Section~\ref{secconelinks} we explain in detail the relationships  between these various hierarchies of conic approximations of the copositive cone (see Theorem \ref{theo-link}).

Each of the above hierarchies of conic approximations covers the {\em interior} of the copositive cone, which follows from the above mentioned representation results. This raises naturally the question of whether some of these hierarchies are  able to cover the {\em full} copositive cone (i.e., also its boundary). This question is the central theme of Section \ref{secexact}.

\medskip Section \ref{secexact} is devoted to investigating exactness properties of the above  hierarchies of cones, i.e., for which matrix sizes   the hierarchies are able to cover the full copositive cone $\COP_n$. 
This question is studied for the cones $\MK^{(r)}_n$ in Section \ref{secexactK} and  for the cones $\LAS^{(r)}_{\Delta_n}$ in Section~\ref{section-membership-LAS}. Section~\ref{secCOP5} is devoted to the exceptional case $n=5$, where one can show that   the hierarchy of cones $\MK^{(r)}_5$ covers the full copositive cone $\COP_5$.

\medskip
Section \ref{section-alpha} discusses the application of the various conic approximation hierarchies for $\COP_n$ to the design of upper bounds for the graph parameter $\alpha(G)$, defined as the maximum cardinality of a stable set in a graph $G$. 
In particular, the cones $\MC^{(r)}_n$ lead to the linear programming based parameters  $\zeta^{(r)}(G)$, discussed in Section~\ref{sec-alpha-C}, and the cones  $\MK^{(r)}_n$ lead to the semidefinite bounds $\vartheta^{(r)}(G)$, discussed in   Section~\ref{sec-alpha-K}.
The main theme in this section is to investigate whether the parameters $\vartheta^{(r)}(G)$ do admit finite convergence to $\alpha(G)$ or, equivalently, whether a class of associated copositive  matrices $M_G$ belong to the union $\bigcup_r\MK^{(r)}_n$. This question, which relates to a long standing conjecture by de Klerk and Pasechnik \cite{dKP2002}, is now settled in the affirmative and a sketch of proof is offered in this section. 

\medskip
We conclude with some observations and further research directions in the last Section \ref{secfinal}.
 
\subsubsection*{Notation}
Throughout we will use the following notation. 
For $n\in \mathbb{N}$ we set $[n]=\{1,2, \dots, n\}$. The nonnegative orthant is $\oR^n_+=\{x\in \oR^n: x_1,\ldots,x_n\ge 0\}$, 
the standard simplex in $\oR^n$ is defined as $\Delta_n=\{x\in \mathbb{R}^n_+: 
\sum_{i=1}^nx_i=1\}$, and the unit sphere in $\mathbb{R}^n$ is defined as $\mathbb{S}^{n-1}=\{x\in \mathbb{R}^n: \sum_{i=1}^nx_i^2=1\}$.
For $x\in \oR^n$, the support of $x$ is the set $\{i\in [n]: x_i\ne 0\}$ and we let $x^{\circ 2}:=(x_1^2,\ldots,x_n^2)$ denote the vector of squared entries. We use the notation $e$ to denote the all-ones vector (of appropriate size), so $e=(1,\ldots,1)^T$. For a sequence $\alpha\in \oN^n$, we set $|\alpha|:=\sum_{i=1}^n\alpha_i$.

Throughout, $\MS^n$ denotes the set of $n\times n$ symmetric matrices. We say that a matrix $M\in \MS^n$ is positive semidefinite (denoted as $M\succeq 0$) if $x^TMx\geq 0$ for all $x\in \mathbb{R}^n$. The set of $n\times n$ positive semidefinite matrices is denoted by $\MS_+^n$. The set of diagonal matrices with strictly positive diagonal entries is denoted by $\MD^n_{++}$. We let $I_n, J_n$ (or simply $I,J$) denote the identity matrix and the all-ones matrix in $\MS^n$.

We denote by $\mathbb{R}[x_1, x_2, \dots, x_n]$ the set of polynomials with real coefficients in $n$ variables. Throughout we abbreviate $\mathbb{R}[x_1, \dots, x_n]$ by  $\mathbb{R}[x]$ when there is no ambiguity. Any polynomial is of the form $p=\sum_{\alpha\in \oN^n} p_\alpha x^\alpha$, where only finitely many coefficients $p_\alpha$ are nonzero. Then $|\alpha|$ is the degree of the monomial $x^\alpha=x_1^{\alpha_1}\cdots x_n^{\alpha_n}$ and the degree of $p$, denoted $\deg(p)$, is the maximum degree of its terms $p_\alpha x^\alpha$ with $p_\alpha\ne 0$. We denote by $\mathbb{R}[x]_r$ the set of polynomials of degree at most $r$.  A form, also known as  a homogeneous polynomial, is a polynomial in which all its terms have the same degree. 

Given a polynomial $f\in \oR[x]$ and a set $K\subseteq \oR^n$, we say that  $f$ is {\em nonnegative} (or {\em positive}) on the set $K$ if $f(x)\geq0$ for all $x\in K$, and we say that $f$ is {\em strictly positive} on $K$ if $f(x)>0$ for all $x\in K$.  Given a tuple of polynomials $h=(h_1, \dots, h_l)$, the ideal generated by $h$ is defined as $I(h):=\{\sum_{i=1}^lq_ih_i: q_i\in \mathbb{R}[x]\}$. Its truncation at degree $r$ is defined as $I(h)_r:=\{\sum_{i=1}^lq_ih_i: \text{deg}(q_ih_i)\leq r \text{ for } i\in [m]\}$. 
We will in particular consider the case when $h=\sum_{i=1}^nx_i-1$ or $h=\sum_{i=1}^nx_i^2-1$, that define the simplex $\Delta_n$ and the unit sphere $\mathbb S^{n-1}$, respectively.
Then we use the shorthand notation $I_{\Delta_n}:=I(\sum_{i=1}^nx_i-1)$ and 
 $I_{\mathbb{S}^{n-1}}:=I(\sum_{i=1}^nx_i^2-1)$. Finally, we let $\Sigma:=\{\sum_{i=1}^mq_i^2: q_i\in \mathbb{R}[x]\}$ denote the cone of sums of squares of polynomials, and, for an integer $r\in\oN$,  $\Sigma_r=\Sigma\cap \mathbb{R}[x]_r$ is the subcone consisting of  the sums of squares that have degree at most $r$.


\section{Preliminaries on polynomial optimization, nonnegative polynomials and sums of squares}\label{preliminaries}
Polynomial optimization asks for minimizing a polynomial over a semialgebraic set. That is, given polynomials $f, g_1, \dots, g_m, h_1, \dots, h_l \in \mathbb{R}[x]$,  the task is to find (or approximate) the infimum of the following problem
\begin{align}\label{poly-opt}
f^*=\inf_{x\in K} f(x) ,
\end{align}
where 
\begin{align}\label{K}
K=\big\{x\in \mathbb{R}^n: g_i(x)\geq 0 \text{ for } i=1, \dots, m \text{ and } h_i(x)=0 \text{ for } i=1,\dots, l\big\}
\end{align}
is a semialgebraic set. Problem (\ref{poly-opt}) can be equivalently rewritten as 
\begin{align}\label{poly-opt-lambda}
f^*=\sup \{\lambda: f(x)-\lambda \geq 0 \text{ for all } x\in K\}.
\end{align}
In view of this new formulation, finding lower bounds for a polynomial optimization problem amounts to finding certificates that certain polynomials are nonnegative on the semialgebraic set $K$.

\subsection{Sum-of-squares certificates for nonnegativity}\label{seccertificate}

Testing whether a polynomial is nonnegative on a semialgebraic set is hard in general. Even testing whether a polynomial is globally nonnegative (nonnegative on $K=\mathbb{R}^n$) is a hard task in general. An easy \textit{sufficient} condition for a polynomial to be globally nonnegative is being a sum of squares. A polynomial $p\in \mathbb{R}[x]$ is said to be a {\em sum of squares} if it can be written as a sum of squares of other polynomials, i.e., if $p=q_1^2 + \dots +q_m^2$ for some $q_1,\ldots,q_m\in \mathbb{R}[x]$. Hilbert \cite{Hilbert1, Hilbert2} showed that every nonnegative polynomial of degree $2d$ in $n$ variables is a sum of squares in the following cases: $(2d, n)$=$(2d,1)$, $(2,n)$, or $(4,2)$. Moreover, he showed that for any other pair $(2d,n)$ there exist nonnegative polynomials that are not sums of squares. The first explicit example of a nonnegative polynomial that is not a sum of squares was given by Motzkin \cite{motzkin-p} in 1967.

\begin{example}{The Motzkin polynomial is nonnegative, but not a sum of squares}\label{example-motzkin-1}
The following polynomial in two variables is known as the Motzkin polynomial:
\begin{align}\label{motzkin-p}
h(x,y)=x^4y^2+x^2y^4-3x^2y^2+1.
\end{align}
The Motzkin polynomial is nonnegative in $\mathbb{R}^2$. 
This can be seen, e.g.,  by using the Arithmetic-Geometric Mean inequality, which gives
$$\frac{x^4y^2+x^2y^4+1}{3}\geq \sqrt[3]{x^4y^2\cdot x^2y^4 \cdot 1}=x^2y^2.$$
However, $h(x,y)$ cannot be written as a sum of squares. This can be checked using ``brute force":  assume $h =\sum_i q_i^2$ and examine the coefficients on both sides (starting from the coefficients of the monomials $x^6,y^6$, etc.; see, e.g.,  \cite{Reznick2000}). 
\\
The Motzkin form is the homogenization of $h$, thus the  homogeneous  polynomial in three variables:
\begin{align}\label{motzkin-f}
m(x,y,z)=x^4y^2+x^2y^4-3x^2y^2z^2+z^6.
\end{align}
Hence, the Motzkin form is  nonnegative on $\oR^3$ and it cannot be written as a sum of squares.
\end{example}
In 1927 Artin \cite{Artin} proved that any globally nonnegative polynomial $f$ can be written as a sum of squares of rational functions, i.e., $f=\sum_{i}(\frac{p_i}{q_i})^2$ for some $p_i,q_i\in \mathbb{R}[x]$, solving affirmatively  Hilbert's 17th problem. Equivalently, Artin's result shows that for any nonnegative polynomial $f$ there exists a polynomial $q$ such that  $q^2f\in \Sigma$. Such certificates are sometimes referred to as certificates ``with denominator". The following result shows that, when $f$ is homogeneous and strictly positive on $\mathbb{R}^n\setminus\{0\}$, the denominator can be chosen to be a power of $(\sum_{i=1}^nx_i^2)$.
\begin{theorem}[Reznick \cite{Reznick1995}]\label{theorem-reznick}
Let $f\in \mathbb{R}[x]$ be a homogeneous polynomial such that $f(x)>0$ for all $x\in \mathbb{R}^n\setminus \{0\}$. Then the following holds:
\begin{align}\label{cert-reznick}
\Big(\sum_{i=1}^nx_i^2\Big)^rf\in \Sigma \quad \text{ for some $r\in \mathbb{N}$}.
\end{align}
\end{theorem}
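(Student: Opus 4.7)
First, since $f$ is homogeneous and positive on $\mathbb{R}^n\setminus\{0\}$, its degree must be even, say $2d$ (otherwise $f(-x)=-f(x)$ would contradict positivity). By compactness of $\mathbb{S}^{n-1}$, one has $m:=\min_{x\in\mathbb{S}^{n-1}} f(x)>0$, which gives $f(x)\ge m\,(\sum_i x_i^2)^d$ pointwise on $\mathbb{R}^n$. The task is to upgrade this pointwise inequality to an algebraic SOS certificate after multiplication by a sufficiently high power of $\sum_i x_i^2$.

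The plan I would follow is Reznick's original argument, which combines Hilbert's identity with a duality argument based on the Fischer (apolar) inner product on forms. Hilbert's identity asserts that, for every $N\ge 1$, there exist positive rationals $c_\ell>0$ and vectors $\beta_\ell\in\mathbb{Q}^n$ such that
\[
\Big(\sum_{i=1}^n x_i^2\Big)^N=\sum_\ell c_\ell\,(\beta_\ell^T x)^{2N}.
\]
The Fischer inner product $\langle p,q\rangle:=p(\partial)q\big|_{x=0}$ on forms of fixed degree $D$ satisfies the reproducing property $\langle(\alpha^T x)^D,q\rangle=D!\,q(\alpha)$. Using Hilbert's identity together with this reproducing property, one constructs a family of linear operators $T_N$ on forms of degree $2(N+d)$ with two features: $T_N(f)$ is manifestly a sum of squares of the shape $\sum_\ell c_\ell\,f(\beta_\ell)\,(\beta_\ell^T x)^{2(N+d)}$ whenever $f(\beta_\ell)>0$ for all $\ell$, and $T_N$ approximates multiplication by $(\sum_i x_i^2)^N$ in Fischer norm as $N\to\infty$.

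Since $f$ is strictly positive on the sphere with margin $m>0$, this coercivity margin eventually dominates the shrinking Fischer-norm error between $T_N(f)$ and $(\sum_i x_i^2)^N f$. Hence for $N$ large enough, $(\sum_i x_i^2)^N f$ itself inherits the SOS representation provided by $T_N(f)$, yielding the certificate with $r=N$.

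The main obstacle I anticipate is making the quantitative convergence argument precise: one must carefully bound the discrepancy between $T_N$ and multiplication by $(\sum_i x_i^2)^N$ against the coercivity margin $m$, so that the conclusion holds at some finite $N$ rather than merely asymptotically. An alternative route would invoke Putinar's theorem (Theorem~\ref{theorem-putinar}) on the compact sphere $\mathbb{S}^{n-1}$, producing an identity $f=\sigma_0+q(1-\sum_i x_i^2)$ with $\sigma_0\in\Sigma$ and $q\in\mathbb{R}[x]$, and then homogenizing by multiplication with $(\sum_i x_i^2)^r$ while carefully tracking parities of the homogeneous components of $\sigma_0$ in order to absorb the affine defect $1-\sum_i x_i^2$.
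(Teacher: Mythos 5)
You should first note that the chapter itself offers no proof of Theorem~\ref{theorem-reznick} (it is quoted from Reznick), so your argument has to stand on its own; your main route is indeed the strategy of Reznick's original paper, but its decisive step is missing as written. Concretely: you set $T_N(f)=\sum_\ell c_\ell f(\beta_\ell)(\beta_\ell^Tx)^{2(N+d)}$, observe it is a nonnegative combination of even powers of linear forms (hence a sum of squares) because $f\ge m>0$ on the sphere, and then assert that, since $T_N(f)$ is close in Fischer norm to $\big(\sum_{i}x_i^2\big)^Nf$, the latter ``inherits'' the SOS representation. Closeness to a sum of squares does not by itself certify membership in $\Sigma$. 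To make this inheritance work you would have to quantify the interiority of the SOS cone at $\big(\sum_ix_i^2\big)^{N+d}$ --- for instance (after normalizing the nodes $\beta_\ell$ to the unit sphere) write $\big(\sum_ix_i^2\big)^Nf=\big(T_N(f)-m\big(\sum_ix_i^2\big)^{N+d}\big)+\big(m\big(\sum_ix_i^2\big)^{N+d}+E_N\big)$ and show that a Gram matrix of the quadrature error $E_N$ is dominated by $m$ --- and for that you need both a cubature interpretation of Hilbert's identity of strength growing with $N$ and a careful normalization, since the raw Fischer norms of these degree-$2(N+d)$ forms blow up. Alternatively, the mechanism actually used by Reznick avoids the perturbation issue: one shows the suitably normalized operator is invertible and close to (a positive multiple of) the identity on degree-$2d$ forms, deduces that the preimage $g_N$ of $\big(\sum_ix_i^2\big)^Nf$ is still strictly positive on the sphere for $N$ large (this is where the margin $m$ enters), and concludes that $\big(\sum_ix_i^2\big)^Nf=T_N(g_N)$ is manifestly a nonnegative combination of $2(N+d)$-th powers. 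Either way, the quantitative estimate is the entire content of the theorem --- the soft part (even degree, margin on the sphere) is immediate --- and you acknowledge but do not supply it, so the proposal is an outline rather than a proof.

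Your alternative route, on the other hand, can be closed rigorously with results already quoted in the chapter, and without circularity. The Archimedean condition for the sphere is trivial (take $N=1$, $\sigma_0=0$, $q=-1$ in (\ref{archimedean-cond})), so Theorem~\ref{theorem-putinar} applies to $f>0$ on $\mathbb{S}^{n-1}$ and gives $f=\sigma+u\big(\sum_ix_i^2-1\big)$ with $\sigma\in\Sigma$; the parity and homogenization bookkeeping you mention is exactly the nontrivial direction of Theorem~\ref{dKPL-prop} (de Klerk--Laurent--Parrilo), which converts such a representation with $\sigma\in\Sigma_{2r+2d}$ into $\big(\sum_ix_i^2\big)^rf\in\Sigma$. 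Neither Putinar's theorem nor that lemma relies on Reznick's result, so this gives a legitimate, if non-constructive, proof; Reznick's own argument buys in addition an explicit bound on $r$ in terms of $n$, $d$ and the margin $m$, which the Putinar route cannot provide.
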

Scheiderer \cite{Scheiderer} shows that the {\em strict} positivity condition can be omitted for $n=3$: any {\em{nonnegative}} form $f$ in three variables admits a certificate as in (\ref{cert-reznick}). On the negative side, this is not the case  for  $n\geq 4$: there exist nonnegative forms in $n\ge 4$ variables that do not admit a positivity certificate as in (\ref{cert-reznick}) (an example is given below).

\begin{example}{Certificate for nonnegativity of the Motzkin polynomial}
Let  $h(x,y)=x^4y^2+x^2y^4-3x^2y^2+1$ be the Motzkin polynomial, which  is nonnegative and not a sum of squares. However, 
$$(x^2+y^2)^2h(x,y)=x^2y^2(x^2+y^2+1)(x^2+y^2-2)^2+(x^2-y^2)^2$$
is a sum of squares. This sum-of-squares certificate  thus shows  (again) that $h$ is nonnegative on $\mathbb{R}^2$.
\end{example}

\begin{example}{A nonnegative polynomial $f$ such that $(\sum_{i=1}^nx_i^2)^rf\notin \Sigma$ for all $r\in \mathbb{N}$}
Let $q(x,y,z,w):=m^2+w^6m$, where $m$ is the Motzkin form from (\ref{motzkin-f}). Clearly, $q$ is nonnegative on $\mathbb{R}^4$, as $m$ is nonnegative on $\mathbb{R}^3$. Assume that  there exists $r\in\oN$ such that $(x^2+y^2+z^2+w^2)^rq\in \Sigma$. Then, $p':=(x^2+y^2+z^2+1)^rq(x,y,z,1)=(x^2+y^2+z^2+1)^r(m^2+m)$ is also a sum of squares. 
As $p'$ is a sum of squares, one can check that also  its lowest degree homogeneous part  is a sum of squares (see \cite[Lemma 4]{LV2021b}). However, the lowest degree homogeneous part of $p'$ is $m$, which is not a sum of squares. Hence this shows that  $(x^2+y^2+z^2+w^2)^rq\not\in \Sigma$ for all $r\in \oN$.
\end{example}

Next, we give some positivity certificates for polynomials on semialgebraic sets. The following result shows the existence of a positivity certificate for polynomials that are strictly positive on the nonnegative orthant $\mathbb{R}_+^n$.
\begin{theorem}[P\'{o}lya \cite{Polya}]\label{theorem-polya}
Let $f$ be a homogeneous polynomial such that $f(x)>0$ for all $x\in \mathbb{R}_+^n\setminus{\{0\}}$. Then the following holds:
\begin{align}\label{cert-polya}
\Big(\sum_{i=1}^nx_i\Big)^rf \text{ has nonnegative coefficients \quad for some } r\in \mathbb{N}.
\end{align}
\end{theorem}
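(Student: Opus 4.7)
The plan is to prove P\'olya's theorem by interpreting the coefficients of $(\sum_i x_i)^r f$ as Bernstein-type evaluations of $f$ on the standard simplex $\Delta_n$, and then invoke uniform approximation together with the compactness of $\Delta_n$. By homogeneity, the hypothesis $f(x)>0$ on $\oR^n_+\setminus\{0\}$ is equivalent to $f(x)>0$ on $\Delta_n$. Since $\Delta_n$ is compact and $f$ is continuous, there exists $\varepsilon>0$ with $f(y)\ge\varepsilon$ for all $y\in\Delta_n$. Let $d=\deg(f)$ and write $f=\sum_{|\alpha|=d}f_\alpha x^\alpha$.

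The first step is to compute the coefficient of $x^\beta$ (for $|\beta|=r+d$) in $(\sum_i x_i)^r f$. Expanding the multinomial $(\sum_i x_i)^r=\sum_{|\gamma|=r}\binom{r}{\gamma}x^\gamma$ and multiplying by $f$ yields
\begin{equation*}
c_\beta \;=\; \sum_{\substack{|\alpha|=d\\ \alpha\le\beta}} f_\alpha\,\binom{r}{\beta-\alpha}.
\end{equation*}
The key computation is to compare $c_\beta$ with $\binom{r+d}{\beta}f(\beta/(r+d))$. A direct calculation gives
\begin{equation*}
\frac{\binom{r}{\beta-\alpha}}{\binom{r+d}{\beta}} \;=\; \frac{r!}{(r+d)!}\prod_{i=1}^n \beta_i(\beta_i-1)\cdots(\beta_i-\alpha_i+1),
\end{equation*}
so multiplying both sides by $(r+d)^d$ and setting $y=\beta/(r+d)\in\Delta_n$ produces a quantity that tends to $y^\alpha$ as $r\to\infty$, with an error that is $O(1/r)$ uniformly in $y$ (since each $\alpha_i\le d$ is bounded). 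Summing over $\alpha$ with $|\alpha|=d$ one obtains
\begin{equation*}
\frac{(r+d)^d}{\binom{r+d}{\beta}}\,c_\beta \;=\; f\!\left(\frac{\beta}{r+d}\right) + O(1/r),
\end{equation*}
where the $O(1/r)$ bound is uniform over all $\beta$ with $|\beta|=r+d$.

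For the final step, since $f\ge\varepsilon$ on $\Delta_n$, choose $r$ large enough that the uniform error term above is smaller than $\varepsilon/2$. Then $(r+d)^d c_\beta/\binom{r+d}{\beta}\ge \varepsilon/2>0$, hence $c_\beta>0$ for every $\beta$ with $|\beta|=r+d$, which is exactly the claim. The main technical obstacle is establishing the uniform $O(1/r)$ bound on the approximation error; this requires bounding $\prod_i \beta_i(\beta_i-1)\cdots(\beta_i-\alpha_i+1)/(r+d)^{|\alpha|}-y^\alpha$ uniformly for $y\in\Delta_n$, which is routine since each factor differs from $y_i^{\alpha_i}$ by a polynomial correction of order $1/(r+d)$ and the number of such factors is bounded by $d$.
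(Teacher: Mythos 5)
The paper does not prove this statement: P\'olya's theorem is quoted from \cite{Polya} without proof, so there is no in-paper argument to compare against. Your proposal is the classical proof (essentially P\'olya's original argument, the same one underlying the quantitative versions of Powers and Reznick), and it is correct in substance: the normalized coefficients $c_\beta/\binom{r+d}{\beta}$ are uniform $O(1/r)$-perturbations of the values $f(\beta/(r+d))$ at rational points of $\Delta_n$, and compactness together with $f\ge\varepsilon>0$ on $\Delta_n$ finishes the argument. Two small repairs are needed. First, your displayed approximation carries a spurious factor $(r+d)^d$: since
\begin{equation*}
\frac{\binom{r}{\beta-\alpha}}{\binom{r+d}{\beta}}
=\frac{\prod_{i=1}^n\beta_i(\beta_i-1)\cdots(\beta_i-\alpha_i+1)}{(r+1)(r+2)\cdots(r+d)}
=y^\alpha+O(1/r)\qquad\text{with } y=\beta/(r+d),
\end{equation*}
the ratio already tends to $y^\alpha$ with no rescaling; if you prefer to compare $\prod_i\beta_i(\beta_i-1)\cdots(\beta_i-\alpha_i+1)$ with $(r+d)^d y^\alpha$, then the compensating factor $(r+d)^d/\bigl((r+1)\cdots(r+d)\bigr)=1+O(1/r)$ must appear. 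The correct normalized identity is $c_\beta/\binom{r+d}{\beta}=f(\beta/(r+d))+O(1/r)$, uniformly in $\beta$; the slip is harmless for your conclusion because the extra factor is positive and only the sign of $c_\beta$ matters. Second, when you pass from the sum over $\alpha\le\beta$ (which is what $c_\beta$ involves) to the sum over all $\alpha$ with $|\alpha|=d$ in order to recognize $f(y)$, you should note that for $\alpha\not\le\beta$ the falling-factorial product vanishes (some $\beta_i$ is a nonnegative integer smaller than $\alpha_i$) while $y^\alpha=O(1/r)$ because then $y_i\le d/(r+d)$; hence the uniform error bound survives this completion of the sum. With these two points made explicit, your argument is a complete and standard proof, and it even gives the stronger conclusion that all coefficients of $(\sum_i x_i)^rf$ are strictly positive for every sufficiently large $r$.
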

In addition, Castle, Powers, and Reznick \cite{CPR} show that nonnegative polynomials on $\mathbb{R}^n_+$ with finitely many zeros (satisfying some technical properties) also admit a certificate as in (\ref{cert-polya}).

Now we consider positivity  certificates for polynomials restricted to compact semialgebraic sets. Let $g=\{g_1, \dots, g_m\}$ and $h=\{h_1, \dots, h_l\}$ be sets of polynomials and consider the semialgebraic set $K$ defined as in 
(\ref{K}). 
The \textit{quadratic module} generated by $g$, denoted by $\MM(g)$, is defined as
\begin{align}
\MM(g):=\Big\{\sum_{i=0}^m\sigma_ig_i  :  \sigma_i\in \Sigma \text{ for } i=0,1, \dots, m, \text{ and } g_0:=1\Big\},
\end{align} 
and the \textit{preordering} generated by $g$, denoted by $\MT(g)$, is defined as
\begin{align}
\MT(g):=\Big\{\sum_{J\subseteq [m]}\sigma_J\prod_{i\in J}g_i: \sigma_J\in \Sigma \text{ for } J\subseteq \{1,\dots m\}, \text{ and } g_{\emptyset}:=1\Big\}.
\end{align}  
Observe that, if for a polynomial $f$ we have
\begin{align}
f \in\MM(g) + I(h), \label{cert-putinar}\\
 \text{ or } \
f\in \MT(g) + I(h),\label{cert-schmudgen}
\end{align}
then $f$ is nonnegative on $K$. Moreover, if a polynomial admits a certificate as in 
(\ref{cert-putinar}), then it also admits a certificate as in (\ref{cert-schmudgen}), because $\MM(g)\subseteq \MT(g)$.

\ignore{
\begin{align*}
\mathcal{M}(K)&=\Big\{\sum_{i=0}^n\sigma_ig_i +  q : \sigma_i\in \Sigma, q\in I_h\}, \quad \text{where }g_0:=1\Big\},
\\
\mathcal{T}(K)&= \Big\{\sum_{I\subseteq [m]}\sigma_Ig_I + q : \sigma_J\in \Sigma, q\in I_h\Big\}, \quad \text{where } g_I:=\prod_{i\in J}g_i.
\end{align*}
Clearly,  $\mathcal{M}(K)\subseteq \mathcal{T}(K)$. If a polynomial belongs to  $\mathcal{M}(K)$ or  to $\mathcal{T}(K)$ then it is clearly nonnegative on K.
}

\begin{example}{Example}
Consider the polynomial $p(x,y)=x^2+y^2-xy$ in two variables $x,y$. We show that $p$ is nonnegative on $\mathbb{R}^2_+$
 in two different ways. The following identities hold:
 \begin{align*}
 (x+y)p(x,y)=x^3+y^3, \\
 p(x,y)=(x-y)^2+xy, 
 \end{align*}
which both 
certify that $p$ is nonnegative on $\mathbb{R}_+^2$. The first identity is a certificate as in (\ref{cert-polya}):  $x^3+y^3$ has nonnegative coefficients. The second identity shows that $p\in \MT(\{x,y\})$, i.e.,  gives a certificate as in (\ref{cert-schmudgen}).
 \end{example}
 
The following two theorems show that, under certain conditions on the semialgebraic set $K$ (and on the tuples $g$ and $h$ defining it), every strictly positive polynomial admits certificates as in (\ref{cert-putinar}) or (\ref{cert-schmudgen}).

\begin{theorem}[Schm\"{u}dgen  \cite{Schmudgen}]\label{theorem-schmudgen}
Let $K=\{x\in \mathbb{R}^n: g_i(x)\geq 0 \text{ for }i\in[m], h_j(x)=0\text{ for }j\in[l]\}$ be a compact semialgebraic set. Let $f\in \mathbb{R}[x]$ such that $f(x)>0$ for all $x\in K$. Then we have $f\in \mathcal{T}(g)+I(h)$.
\end{theorem}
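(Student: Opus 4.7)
The natural approach is to derive Schm\"udgen's theorem from the abstract Krivine--Stengle Positivstellensatz and then to use compactness of $K$ to eliminate the denominator in the resulting certificate.

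The plan has three steps. First, I would establish the Krivine--Stengle Positivstellensatz in the following form: if $f>0$ on $K$, then there exist $p,q\in \MT(g)+I(h)$ with $pf = 1+q$. The standard proof is by contradiction, using Zorn's lemma to extend $\MT(g)+I(h)$ to a maximal prime cone that still avoids $-1$ but contains $-f$, and then applying the Tarski--Seidenberg transfer principle to the resulting real closed field to locate a point of $K$ at which $f\le 0$, contradicting the hypothesis.

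Second, I would exploit compactness of $K$ to show that $\MT(g)+I(h)$ is \emph{archimedean}, meaning that $N-\sum_{i=1}^n x_i^2\in \MT(g)+I(h)$ for some $N\in\mathbb{N}$. Compactness gives $N-\sum_i x_i^2>0$ on $K$ for $N$ large enough, so the previous step yields a certificate with denominator for this polynomial. An inductive argument, bounding each variable $x_i$ separately by means of identities such as $4ab=(a+b)^2-(a-b)^2$, upgrades this to genuine membership in $\MT(g)+I(h)$ for a possibly larger constant.

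Finally, I would use archimedeanness to clear the denominator in the certificate $pf=1+q$. Archimedeanness implies that every polynomial $a$ satisfies $M-a\in \MT(g)+I(h)$ for some $M>0$; applying this to $a=p$ and combining with $pf=1+q$ produces a recurrence $Mf = sf+1+q$ with $s=M-p\in \MT(g)+I(h)$, which is iterated inside the preordering to eventually express $f$ itself as an element of $\MT(g)+I(h)$. The hardest step is precisely this denominator-clearing procedure: naive division by $p$ is impossible, and making the iteration terminate requires both the archimedean property (to control the intermediate terms) and the strict positivity of $f$ on $K$ (to ensure the recurrence makes progress at each stage and halts after finitely many iterations).
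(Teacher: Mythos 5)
The paper does not prove Theorem \ref{theorem-schmudgen}: it is quoted as a classical result with a pointer to \cite{Schmudgen}, so there is no in-paper argument to compare against. Your plan is the standard purely algebraic route (Krivine--Stengle Positivstellensatz, then W\"ormann's theorem that compactness of $K$ makes the preordering archimedean, then clearing the denominator), which differs from Schm\"udgen's original operator-theoretic proof via the moment problem but is the route found in most modern expositions. Steps 1 and 2 are correctly identified, with the usual caveats that each is a substantial theorem in its own right, that one should observe that $\MT(g)+I(h)$ is again a preordering closed under products (so the ideal part causes no trouble), and that the degenerate case $K=\emptyset$ is handled separately since then $-1\in\MT(g)+I(h)$ and every polynomial lies in $\MT(g)+I(h)$.

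The one place where your sketch is genuinely imprecise is the final denominator-clearing step. The identity $Mf=(M-p)f+pf=sf+1+q$ with $s=M-p$ does not by itself produce membership in $T:=\MT(g)+I(h)$, because $sf$ is not known to lie in $T$, and iterating that identity as written just goes in circles. The standard repair uses archimedeanness twice: choose $N$ with $N-p\in T$ and $c\ge 0$ with $f+c\in T$; then $N(f+c)=1+q+(N-p)(f+c)+pc\in 1+T$, since $T$ is closed under products and contains nonnegative constants, whence $f+c-\tfrac{1}{N}\in T$. Each iteration thus lowers the additive constant by the fixed amount $\tfrac{1}{N}$, and after finitely many steps one reaches $f+c'\in T$ with $c'\le 0$, hence $f\in T$. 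Note in particular that, contrary to what you assert, the strict positivity of $f$ on $K$ plays no role in making this iteration terminate (termination comes from archimedeanness alone); positivity is used exactly once, to obtain the Positivstellensatz identity $pf=1+q$. Alternatively, your step 3 can be replaced wholesale by an appeal to the Kadison--Dubois/Becker--Schwartz representation theorem for archimedean preorderings. With that repair (or citation) your outline is a correct proof plan.
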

We say that the sets of polynomials $g=\{g_1, \dots, g_m\}$ and $h=\{h_1, \dots, h_l\}$ satisfy the \textit{Archimedean condition} if 
\begin{align}\label{archimedean-cond}
N-\sum_{i=1}^n x_i^2\in \MM(g) + I(h) \quad \text{for some } N\in \mathbb{N}.
\end{align}
Note this implies that the associated set $K$ is compact. We have the following result.

\begin{theorem}[Putinar \cite{Putinar}]\label{theorem-putinar}
Let $K=\{x\in \mathbb{R}^n: g_i(x)\geq 0 \text{ for }i\in[m], h_j(x)=0\text{ for }j\in[l]\}$ be a semialgebraic set. Assume the sets of polynomials $g=\{g_1, \dots, g_m\}$ and $h=\{h_1, \dots, h_l\}$ satisfy the Archimedean condition (\ref{archimedean-cond}). Let $f\in \mathbb{R}[x]$ be such that $f(x)>0$ for all $x\in K$. Then we have $f\in \mathcal{M}(g)+I(h)$.
\end{theorem}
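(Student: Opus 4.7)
The plan is to prove Putinar's theorem by a Hahn--Banach separation argument combined with a moment-representation construction via the Gelfand--Naimark--Segal (GNS) machinery, with the Archimedean hypothesis being exactly the ingredient that provides the boundedness needed to realize the abstract functional as an honest measure supported on $K$.

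First I would establish an auxiliary Jacobi-type lemma: the Archimedean condition implies that for every polynomial $p\in\oR[x]$ there exists a constant $N_p\in\oN$ with $N_p\pm p\in\MM(g)+I(h)$. The proof proceeds by induction on $\deg(p)$, reducing to monomials and using that a sufficiently high integer power of $N-\sum_{i=1}^n x_i^2$ dominates any given polynomial in absolute value. Consequently $1$ is an algebraic interior point of the convex cone $\MC:=\MM(g)+I(h)$. Now suppose for contradiction that $f\notin\MC$. Hahn--Banach separation (applicable because $1$ is an algebraic interior point of $\MC$) yields a linear functional $L:\oR[x]\to\oR$ with $L(\MC)\ge 0$, $L(1)=1$, $L(f)\le 0$, and moreover $|L(p)|\le N_p$ for every $p$. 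In particular $L$ vanishes on $I(h)$ and satisfies $L(\sigma g_i)\ge 0$ for every $\sigma\in\Sigma$ and $i\in\{0,1,\dots,m\}$ (with $g_0:=1$).

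Next I would construct a representing measure via GNS. The bilinear form $\langle p,q\rangle:=L(pq)$ is positive semidefinite on $\oR[x]/I(h)$ since $L(\Sigma)\ge 0$; after quotienting by its null space and completing to a Hilbert space $H$, let $X_i$ denote the operator of multiplication by $x_i$. The Archimedean bound $L(\sum_{i=1}^n x_i^2)\le N$ forces the $X_i$ to be bounded self-adjoint operators, and they commute because polynomial multiplication does. The inequalities $L(\sigma g_i)\ge 0$ translate into $g_i(X_1,\dots,X_n)\succeq 0$, and $L(I(h))=0$ translates into $h_j(X_1,\dots,X_n)=0$. The joint spectral theorem for the commuting tuple $(X_1,\dots,X_n)$ yields a Borel probability measure $\mu$ on $\oR^n$ with $L(p)=\int p\,d\mu$ for every $p$, and the operator inequalities above force $\supp(\mu)\subseteq\{x:g_i(x)\ge 0,\ h_j(x)=0\}=K$. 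Since $f>0$ on $K$ and $\mu(K)=1$, this gives $L(f)=\int_K f\,d\mu>0$, contradicting $L(f)\le 0$.

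The main obstacle is the boundedness of the coordinate operators $X_i$ in the GNS construction: this is precisely what the Archimedean hypothesis buys, and without it the $X_i$ may fail to be bounded so that the joint spectral measure need not be compactly supported in $K$. All other steps---Hahn--Banach separation, the GNS construction, and the joint spectral theorem for a commuting tuple of bounded self-adjoint operators---are classical once boundedness is in place. It is worth noting that the weaker Theorem~\ref{theorem-schmudgen} of Schm\"udgen can be derived by a similar construction where the Archimedean property is not assumed but compactness of $K$ is used to bound the coordinate operators indirectly via products $\prod_{i\in J}g_i$, which explains why its conclusion involves the preordering $\MT(g)$ rather than the quadratic module $\MM(g)$.
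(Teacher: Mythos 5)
The paper does not prove this statement at all: it is quoted as a classical result with a citation to Putinar's 1993 paper, so there is nothing internal to compare against. Your outline is, in substance, the standard (essentially Putinar's original) proof: Archimedeanity makes $1$ an order unit of the cone $\MM(g)+I(h)$, Eidelheit/Hahn--Banach separation produces a normalized functional $L$ nonnegative on the cone with $L(f)\le 0$, the GNS construction with bounded commuting multiplication operators and the joint spectral theorem turn $L$ into a probability measure supported in $K$, and $f>0$ on $K$ gives the contradiction. All the main steps are correct and complete in the way they need to be for this argument. One point deserves repair: your justification of the auxiliary lemma ($N_p\pm p\in\MM(g)+I(h)$ for every $p$) by ``a sufficiently high power of $N-\sum_i x_i^2$ dominates $p$'' does not work as stated, because a quadratic module is not closed under multiplication, so powers of $N-\sum_i x_i^2$ need not lie in $\MM(g)+I(h)$. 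The standard argument instead shows that the set $B=\{p:\exists N,\ N\pm p\in \MM(g)+I(h)\}$ is a subring of $\oR[x]$ (using, e.g., the identity $N^2-p^2=\tfrac{1}{2N}\bigl[(N+p)^2(N-p)+(N-p)^2(N+p)\bigr]$, in which module elements are only multiplied by squares) together with $x_i\in B$, which follows from $N-\sum_j x_j^2\in\MM(g)+I(h)$ and $(x_i\pm\tfrac12)^2\in\Sigma$; with this lemma in place the rest of your argument goes through. Your closing remark about Schm\"udgen's theorem is also essentially right, though there the Archimedeanity of the preordering is usually derived from compactness via the Positivstellensatz rather than by bounding operators directly.
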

Note that positivity certificates for a polynomial $f$ as in Theorem \ref{theorem-schmudgen} and Theorem \ref{theorem-putinar} involve a representation of the polynomial $f$ ``without denominators".

\subsection{Approximation hierarchies for polynomial optimization}\label{secpop}
Based on the result in Putinar's theorem, Lasserre \cite{Lasserre1} proposed a hierarchy of approximations $(f^{(r)})_{r\in \oN}$ for problem (\ref{poly-opt}). Given an  integer $r\in \mathbb{N}$,   the \textit{quadratic module truncated at degree $r$} (generated by the set $g=\{g_1, \dots, g_m\}$)   is defined as 
\begin{align}\label{truncated-qm}
\mathcal{M}(g)_{r}:=\Big\{\sum_{i=0}^m\sigma_ig_i  : \sigma_i\in \Sigma_{r-\text{deg}(g_i)}\ \text{\rm for } i\in\{0,1,\ldots,m\}, \text{\rm and }  g_0=1\Big\},
\end{align}
and  the  parameter $f^{(r)}$  as
\begin{align}\label{lasserre-hierarchy}
f^{(r)}:= \sup \{\lambda : f-\lambda\in \MM(g)_r + I(h)_r\}.
\end{align}
Clearly, $f^{(r)}\le f^{(r+1)} \leq f^*$ for all $r \in \mathbb{N}$. The hierarchy of parameters $f^{(r)}$ is also known as \textit{Lasserre sum-of-squares hierarchy} for problem (\ref{poly-opt}).

\begin{example}{Semidefinite programming and sums of squares}
Consider a polynomial $p\in \mathbb{R}[x]_{2d}$. The following observation was made in \cite{CLR}: 
\begin{align}\label{sossdp}
p\in \Sigma_{2d} \Longleftrightarrow p=[x]_{d}^TM[x]_{d} \text{ for some } M\succeq 0,
\end{align}
where $[x]_d=(x^{\alpha})_{|\alpha|\leq d}$ denotes the vector of monomials with degree at most $d$.

 Indeed, if $p\in \Sigma_{2d}$ then $p=\sum_{i=1}^m q_i^2$ for some $q_i\in \mathbb{R}[x]_d$. We can write $q_i=[x]_d^Tv_i$ for an appropriate vector $v_i$. Then, we obtain $p=\sum_{i=1}^m q_i^2 = [x]_d^T(\sum_{i=1}^mv_iv_i^T)[x]_d^T=[x]_d^TM[x]_d$, where $M:=\sum_{i=1}^mv_iv_i^T$ is a positive semidefinite matrix.

Conversely, assume $p=[x]^T_dM[x]_d$ with $M\succeq 0$. Then $M=\sum_{i=1}^mv_iv_i^T$ for some vectors $v_1, \dots, v_m$. Hence, $p=\sum_{i=1}^m([x]_d^Tv_i)^2$ is a sum of squares. 

So relation (\ref{sossdp}) shows that testing whether a given polynomial is a sum of squares can be modeled as  a semidefinite program. There exist efficient algorithms for solving semidefinite programs (up to any arbitrary precision, and under some technical assumptions). See, e.g., \cite{BTN,dKbook}.
\end{example}

Under the Archimedean condition, by Putinar's theorem,  we have asymptotic convergence of the Lasserre hierarchy: $f^{(r)}\to f^*$ as $r\to \infty$. We say that {\em finite convergence} holds if $f^{(r)}=f^*$ for some $r\in \mathbb{N}$. In general, finite convergence does not hold, as the following example shows. 

\begin{example}{A polynomial optimization problem without finite convergence}\label{example-finite}
Consider the problem
$$\min \quad x_1x_2 \quad \text{ s.t. }\quad  x\in \Delta_3, \text{ \rm i.e., } x_1\geq 0, x_2\geq 0, x_3\geq 0, x_1+x_2+x_3=1.$$
We show that the Lasserre hierarchy for this problem does not have finite convergence. The optimal value is clearly 0 and is attained, for example, in $x=(0,0,1)$. Assume the Lasserre hierarchy has finite convergence.  Then, 
\begin{align}\label{no-finite}
x_1x_2= \sigma_0 + \sum_{i=1}^{3} x_i\sigma_i +q(\sum_{i=1}^{3}x_i-1),
\end{align}
for some $\sigma_i\in \Sigma$ for $i=0,1,2,3$ and $q\in \mathbb{R}[x]$. For a scalar $t\in (0,1)$ define the vector $u_t:=(t,0,1-t)\in \Delta_3$. Now we evaluate equation (\ref{no-finite}) at $x+u_t$ and obtain 
\begin{align*}
x_1x_2 + tx_2 = \sigma_0(x+u_t) + (x_1+t)\sigma_1(x+u_t) + x_2\sigma_2(x+u_t)\\
+(x_3+1-t)\sigma_3(x+u_t) + q(x+u_t)(x_1+x_2+x_3).
\end{align*}
for any fixed $t\in (0,1)$. We compare the coefficients of the polynomials in $x$ at both sides of the above identity.
Observe that there is no constant term in the left hand side, so $\sigma_0(u_t) + t\sigma_1(u_t) + (1-t)\sigma_3(u_t)=0$, which implies $\sigma_i(u_t)=0$ for $i=0,1,3$ as $\sigma_i\in \Sigma$ and thus $\sigma_i(u_t)\geq 0$. Then, for $i=0,1,3$, the polynomial $\sigma_i(x+u_t)$ has no constant term, and thus  it has no linear terms.  Now, by comparing the coefficient of $x_1$ at both sides, we get $q(u_t)=0$. Finally, by comparing the coefficient of $x_2$ at both sides, we get $t=\sigma_2(u_t)$ for all $t\in (0,1)$. This implies  $\sigma_2(u_t)=t$ as polynomials in the variable $t$. This is a contradiction because $\sigma_2(u_t)$ is a sum of squares in $t$.
\end{example}

\subsection{Optimality conditions and finite convergence}\label{secNie}
In this section we recall a result of Nie \cite{Nie} that guarantees finite convergence of the Lasserre hierarchy (\ref{lasserre-hierarchy}) under some assumptions on the minimizers of problem (\ref{poly-opt}). This result builds on  a result of Marshall \cite{Marshall2006, Marshall2009}.

Let $u$ be a local minimizer of problem (\ref{poly-opt}) and let $J(u):=\{j\in [m]: g_j(u)=0\}$ be the set of inequality constraints that are active at $u$. We say that the  \textit{constraint qualification condition} (abbreviated as CQC) holds at $u$ if the set 
$$G(u):=\{\nabla g_j(u) : j\in J(u)\} \cup \{\nabla h_i(u): i\in [l]\}$$ is linearly independent.  If CQC holds at $u$ then there exist  $\lambda_1, \dots, \lambda_l, \mu_1, \dots, \mu_m\in\oR$  satisfying
\begin{align*}
  \nabla f(u)=\sum_{i=1}^{l}\lambda_i\nabla h_i(u)+\sum_{j\in J(u)}\mu_j\nabla g_j(u), \quad \mu_j\ge 0 \text{ for } j\in J(u),\quad \\ \mu_j=0\text{ for } j\in [m]\setminus J(u).
\end{align*}
If we have $\mu_j>0$ for all $j\in J(u)$, then we say that the \textit{strict complementarity condition} (abbreviated as SCC) holds. The Lagrangian function $L(x)$ is defined as  
$$L(x):= f(x)-\sum_{i=1}^{l}\lambda_ih_i(x)-\sum_{j\in J(u)}\mu_jg_j(x).$$
Another (second order) necessary condition for $u$ to be a local minimizer is the following inequality 
\begin{align}\label{eqSONC}\tag{SONC}
    v^T\nabla^2L(u)v\geq 0 \text{ for all } v\in G(u)^{\perp}.
\end{align}
\ignore{
where $G(u)$ is defined by
$$G(u)^{\perp}=\{x\in \oR^n: x^T\nabla g_j(u)=0 \text{ for all  } j\in J(u) \text{ and }  x^T\nabla h_i(u)=0 \text{ for all } i\in [k]\}.$$}
 If it happens that the inequality (\ref{eqSONC}) is strict, i.e., if
\begin{align}\label{eqSOSC}\tag{SOSC}
    v^T\nabla^2L(u)v> 0 \text{ for all } 0\neq v\in G(u)^{\perp},
\end{align}
then one says that the \textit{second order sufficiency condition}  (SOSC) holds at $u$.

\medskip
We can now state the following result by Nie \cite{Nie}.

\begin{theorem}[Nie \cite{Nie}]\label{theo-Nie}
Assume that the Archimedean condition (\ref{archimedean-cond}) holds for the polynomial sets $g$ and $h$ in problem (\ref{poly-opt}). If the constraint qualification condition (CQC), the strict complementarity condition (SCC), and the second order sufficiency condition (SOSC)  hold at every global minimizer of (\ref{poly-opt}), then the Lasserre hierarchy (\ref{lasserre-hierarchy}) has finite convergence, i.e., $f^{(r)}=f^*$ for some $r\in \oN$.
\end{theorem}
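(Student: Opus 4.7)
The plan is to reduce the theorem to a result of Marshall \cite{Marshall2006, Marshall2009}, which ensures finite convergence of the Lasserre hierarchy (\ref{lasserre-hierarchy}) whenever the Archimedean condition holds and the so-called \emph{boundary Hessian condition} (BHC) is satisfied at every global minimizer of (\ref{poly-opt}). The core of the argument is then to show that the three hypotheses CQC, SCC, and SOSC at a global minimizer $u$ imply BHC at $u$.

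As a preliminary step, I would observe that the Archimedean condition makes $K$ compact and that CQC combined with SOSC forces every global minimizer to be an isolated strict local minimum; hence the set of global minimizers is finite, say $\{u_1,\ldots, u_N\}$. It therefore suffices to verify BHC at each $u_s$.

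Fix such a minimizer $u = u_s$. CQC provides KKT multipliers $\lambda_i, \mu_j$, and I would exploit the KKT decomposition
\begin{align*}
f(x) - f^* = \bigl(L(x) - f^*\bigr) + \sum_{i=1}^l \lambda_i h_i(x) + \sum_{j\in J(u)} \mu_j g_j(x),
\end{align*}
which is valid because $L(u) = f^*$. The middle sum lies in $I(h)$, and the last sum lies in $\MM(g)$ since SCC gives $\mu_j > 0$ for $j\in J(u)$. The residual $L - f^*$ vanishes at $u$ together with its gradient and, by SOSC, has a Hessian that is positive definite on the tangent space $G(u)^\perp$. Using CQC and the implicit function theorem, I would introduce local analytic coordinates $(y_1,\ldots,y_n)$ near $u$ in which the active $g_j$'s and the $h_i$'s become simple coordinate functions. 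In those coordinates, the active-constraint directions carry the strictly positive multipliers, while $L - f^*$ becomes a positive definite quadratic form in the tangent coordinates up to higher-order terms. Together these ingredients yield, in a neighborhood of $u$, a representation
\begin{align*}
f - f^* \equiv \sigma_0 + \sum_{j\in J(u)} \sigma_j\, g_j \pmod{I(h)}
\end{align*}
with $\sigma_0, \sigma_j \in \Sigma$ of bounded degree, which is precisely the BHC requirement at $u$.

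Once BHC has been verified at every $u_s$, Marshall's theorem yields the desired global representation $f - f^* \in \MM(g)_r + I(h)_r$ for some finite $r$, proving finite convergence. Away from the minimizers, $f - f^*$ is strictly positive on the compact set $K$, so Putinar's theorem (Theorem \ref{theorem-putinar}) applies directly; the patching of the finitely many local BHC representations with such a Putinar representation into a single bounded-degree certificate is the main technical obstacle of the whole program. Converting the analytic local representation coming from the coordinate change into an honest \emph{polynomial} element of the truncated quadratic module requires careful Taylor-remainder estimates and partition-of-unity-style SOS weights, which is precisely where Marshall's machinery is indispensable and which I would invoke as a black box.
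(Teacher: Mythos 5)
The chapter itself gives no proof of this theorem: it is quoted from Nie, with the remark that it builds on Marshall's work, so there is no in-paper argument to compare against. Your plan is essentially a faithful reconstruction of Nie's published proof: reduce to Marshall's theorem (Archimedean condition plus the boundary Hessian condition at every global minimizer gives $f-f^*\in \MM(g)+I(h)$, hence finite convergence), note that the hypotheses force the set of global minimizers to be finite, and then verify that CQC, SCC and SOSC imply the boundary Hessian condition at each minimizer via local coordinates from the implicit function theorem, strictly positive multipliers from SCC along the active-constraint coordinates, and a positive definite Hessian of the Lagrangian on $G(u)^{\perp}$ from SOSC. One caveat: your final formulation of BHC is not accurate. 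BHC is a condition on the local expansion of $f$ at $u$ in boundary coordinates (strictly positive linear coefficients on the active-constraint coordinates and a positive definite quadratic form in the remaining tangent coordinates), not the existence of a local bounded-degree representation $f-f^*\equiv \sigma_0+\sum_j\sigma_j g_j$ modulo $I(h)$; the passage from BHC at the minimizers (together with strict positivity of $f-f^*$ elsewhere on the compact set $K$, where Putinar-type arguments enter) to a single bounded-degree certificate is exactly the content of Marshall's theorem, so the ``patching'' you worry about is not an extra step you must carry out but is subsumed in the black box you invoke. With that correction your outline matches the intended route.
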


Nie \cite{Nie} uses Theorem \ref{theo-Nie} to show that finite convergence of Lasserre hierarchy (\ref{lasserre-hierarchy}) holds generically. Note that the conditions in the above theorem imply that problem (\ref{poly-opt}) has finitely many minimizers.
So this result may help to show finite convergence only when there are finitely many minimizers.
It will be used later in this chapter (for the proof of Theorem \ref{Tpsi} and Theorem \ref{finite-acritical}).

\section{Sum-of-squares approximations for $\COP_n$}\label{secconic}
\label{sec:sos-approx}

As mentioned in the Introduction, optimizing over the copositive cone  is a hard problem,  this motivates to design tractable conic inner approximations for it.  One classical cone that is often used as inner relaxation of $\COP_n$  is the cone $\SPN_n$, defined as 
\begin{align}\label{spn}
\SPN_n:=\{M\in \MS^n: M=P+N \text{ where } P\succeq 0, N\geq0\}.
\end{align} 
In this section we explore several conic approximations for $\COP_n$, strengthening $\SPN_n$, based on sums of squares of polynomials.  They are inspired by the positivity certificates (\ref{cert-reznick}), (\ref{cert-polya}), (\ref{cert-putinar}), and (\ref{cert-schmudgen})   introduced in Section \ref{preliminaries}. 

\subsection{Cones based on P\'{o}lya's nonnegativity certificate}\label{secconeK}

In view of relation (\ref{def-COP}), a matrix is copositive if the homogeneous polynomial $x^TMx$ is nonnegative on $\mathbb{R}_+^n$. Motivated by the nonnegativity certificate (\ref{cert-polya}) in P\'{o}lya's theorem, de Klerk and Pasechnik \cite{dKP2002} introduced the cones $\MC_n^{(r)}$, defined as   
\begin{align}\label{eqCr}
\MC^{(r)}_n:=\Big\{M\in \MS^n: \Big(\sum_{i=1}^{n}x_i\Big)^{r} x^TMx  \text{ has nonnegative coefficients}\Big\}
\end{align}
for any $r\in\oN$. Clearly, $\MC_n^{(r)}\subseteq \MC^{(r+1)}_n\subseteq \COP_n$. By P\'{o}lya's theorem (Theorem \ref{theorem-polya}), the cones $\MC^{(r)}_n$ cover the interior of $\COP_n$, i.e.,  $\text{int}(\COP_n)\subseteq \bigcup_{r\geq 0}\MC_n^{(r)}$. This follows from the fact that $M\in \text{\rm int}(\COP_n)$ precisely when 
$x^TMx>0$ for all $x\in \mathbb{R}_+^{n}\setminus\{0\}$. The cones $\MC^{(r)}_n$ were introduced in \cite{dKP2002} for approximating the stability number of a graph, as we will see in Section \ref{section-alpha}. 

In a similar way, in view of relation (\ref{def-COP-sq}), a matrix is copositive if the homogeneous polynomial $(x^{\circ2})^TMx^{\circ2}$ is globally nonnegative. Parrilo \cite{Parrilo-thesis-2000} introduced the cones $\MK_n^{(r)}$, that are  defined by using certificate (\ref{cert-reznick}) as
\begin{align}\label{eqKr}
\MK^{(r)}_n:=\Big\{M\in \MS^n: \Big(\sum_{i=1}^nx_i^2\Big)^r(x^{\circ2})^TMx^{\circ2} \in\Sigma\Big\}.
\end{align}
Clearly, $\MC_n^{(r)}\subseteq \MK_n^{(r)}\subseteq \COP_n$, and thus $\text{int}(\COP_n)\subseteq \bigcup_{r\geq0}\MK_n^{(r)}$. This inclusion also follows  from Reznick's theorem (Theorem \ref{theorem-reznick}).

The following result by Pe\~{n}a, Vera and Zuluaga \cite{ZVP2006} gives information about the structure of the homogeneous polynomials $f$ for which $f(x^{\circ2})$ is a sum of squares. As a byproduct, this gives the reformulation for the cones $\MK_n^{(r)}$ from relation (\ref{eqKrb}) below.

\begin{theorem}[Pe\~{n}a, Vera, Zuluaga \cite{ZVP2006}]\label{theoZVP} 
Let $f\in \oR[x]$ be a homogeneous polynomial with degree $d$.
Then the polynomial $f(x^{\circ2})$ is a sum of squares if and only if  $f$ admits a decomposition of the  form 
  \begin{align}\label{eqf}
  f = \sum_{\substack{S\subseteq [n], |S|\leq d \\ |S|\equiv d\ (\mmod  2)} }\sigma_Sx^S \ \ \text{ for some } \sigma_S\in \Sigma_{d-|S|}.
  \end{align}
In particular, for any $r\ge 0$, we have
\begin{align}\label{eqKrb}
\MK^{(r)}_n=\Big\{M\in \MS^n: \Big(\sum_{i=1}^{n}x_i\Big)^{r} x^TMx =\sum_{\substack{S\subseteq [n], |S|\leq r+2 \\ |S|\equiv r\ (\mmod  2)} }\sigma_Sx^S \ \ \text{ for some } \sigma_S\in \Sigma_{r+2-|S|}\Big\}.
\end{align}
\end{theorem}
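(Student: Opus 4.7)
The plan is to establish the two directions of the stated equivalence separately and then derive \eqref{eqKrb} as the immediate specialisation $f(y) = (\sum_{i=1}^n y_i)^{r} y^T M y$, which is homogeneous of degree $d=r+2$.

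The ``if'' direction is a direct verification: given the decomposition, substituting $x_i \mapsto x_i^2$ gives
\[
f(x^{\circ 2}) \;=\; \sum_S \sigma_S(x^{\circ 2})\Bigl(\prod_{i\in S}x_i\Bigr)^{2},
\]
which is a sum of squares because $\Sigma$ is preserved under polynomial substitutions and under multiplication (using $(\sum p_i^2)(\sum q_j^2) = \sum_{i,j}(p_iq_j)^2$). The parity condition $|S|\equiv d\ (\mmod 2)$ is not needed for this direction; it will be automatic in the converse because every nonzero sum of squares has even degree.

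The ``only if'' direction is where the work lies. Starting from $f(x^{\circ 2}) = \sum_k q_k^2$, the main obstacle is that the squares $q_k^2$ may freely mix monomials of different parity types, and I need to reorganise the sum so that each square corresponds to a single subset $T\subseteq[n]$. I plan two preparatory reductions. First, since $f(x^{\circ 2})$ is homogeneous of degree $2d$, comparing the top-degree and bottom-degree homogeneous parts of $\sum_k q_k^2$ forces each $q_k$ to be homogeneous of degree exactly $d$; this is a standard argument. Second, I decompose $q_k = \sum_T q_{k,T}$, where $q_{k,T}$ collects the monomials $x^\alpha$ with $\{i : \alpha_i \text{ odd}\}=T$. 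Averaging the identity $f(x^{\circ 2}) = \sum_k q_k(\epsilon\cdot x)^2$ over all sign patterns $\epsilon\in\{-1,+1\}^n$ and exploiting $\epsilon^T\epsilon^{T'} = \epsilon^{T\triangle T'}$ together with $\sum_\epsilon\epsilon^U = 2^n\,\mathbf{1}[U=\emptyset]$, the cross terms cancel and I obtain
\[
f(x^{\circ 2}) \;=\; \sum_{k,T} q_{k,T}(x)^2.
\]

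It remains to recognise the shape of each $q_{k,T}$. Being homogeneous of degree $d$ and supported on monomials of parity type $T$, each $q_{k,T}$ is divisible by $x^T$ with an even-exponent quotient, so $q_{k,T}(x) = x^T p_{k,T}(x^{\circ 2})$ for a homogeneous polynomial $p_{k,T}$ of degree $(d-|T|)/2$; this forces $|T|\le d$ and $|T|\equiv d\ (\mmod 2)$. Hence $q_{k,T}^2 = (x^{\circ 2})^T p_{k,T}(x^{\circ 2})^2$, and setting $y=x^{\circ 2}$ and grouping by $T$ yields
\[
f(y) \;=\; \sum_{T} y^T\, \sigma_T(y), \qquad \sigma_T \;:=\; \sum_k p_{k,T}^2 \,\in\, \Sigma_{d-|T|},
\]
which is the claimed decomposition. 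Finally, \eqref{eqKrb} is the application of this equivalence to $f(y) = (\sum_i y_i)^{r} y^T M y$, for which $M\in \MK_n^{(r)}$ means exactly $f(x^{\circ 2})\in\Sigma$.
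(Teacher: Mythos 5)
Your proposal is correct. Note that the chapter itself offers no proof of this statement—it is quoted from Pe\~na, Vera and Zuluaga \cite{ZVP2006}—so there is nothing internal to compare against; your argument is a valid self-contained proof and follows the standard route used in that literature: reduce to homogeneous $q_k$ of degree $d$ by comparing extremal homogeneous components, split each $q_k$ according to the parity pattern of its monomials, and kill the cross terms by averaging over the sign group $\{\pm 1\}^n$ (equivalently, by extracting the even-monomial part), after which each surviving square factors as $x^T p_{k,T}(x^{\circ 2})^2$ and the substitution $y=x^{\circ 2}$ (legitimate since the identity on the orthant forces equality of polynomials) gives the decomposition with the degree and parity constraints on $|T|$; the specialisation $f=(\sum_i y_i)^r\, y^TMy$ then yields (\ref{eqKrb}) exactly as you say.
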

Alternatively, the cones $\MK^{(r)}_n$ may be defined as
\begin{align}\label{eqKrbb}
\MK^{(r)}_n=\Big\{M\in \MS^n: \Big(\sum_{i=1}^{n}x_i\Big)^{r} x^TMx =
\sum_{\substack{\beta\in \mathbb{N}^n\\ |\beta|\le r+2 }} \sigma_\beta x^\beta \ \ \text{ for some } \sigma_\beta\in \Sigma_{r+2-|\beta|}\Big\},
\end{align}
where, in (\ref{eqKrb}), one replaces square-free monomials by arbitrary monomials.
Based on this reformulation of the cones $\MK_n^{(r)}$, Pe\~{n}a et.al. \cite{ZVP2006} introduced the cones $\MQ_n^{(r)}$, 
defined as 
\begin{align}\label{eqQr}
\MQ^{(r)}_n:=\Big\{M\in \MS^n: \Big(\sum_{i=1}^{n}x_i\Big)^{r} x^TMx =
\sum_{\substack{\beta\in\oN^n \\ |\beta|=r,r+2} }\sigma_\beta x^\beta \ \ \text{ for some } \sigma_\beta\in \Sigma_{r+2-|\beta|}\Big\}.
\end{align} 
So $\MQ_n^{(r)}$ is a restrictive version of the formulation (\ref{eqKrbb}) for the cone $\MK_n^{(r)}$, in which the decomposition only allows sums of squares of degree 0 and 2. Then, we have 
\begin{align}\label{inclusion-cones}
\MC_n^{(r)}\subseteq \MQ_n^{(r)} \subseteq \MK_n^{(r)},
\end{align}
and thus 
\begin{align}\label{inclusion-union-cones}
\text{int}(\COP_n)\subseteq \bigcup_{r\geq0}\MC_n^{(r)}\subseteq \bigcup_{r\geq0}\MQ_n^{(r)}\subseteq \bigcup_{r\geq0}\MK_n^{(r)}.
\end{align}

As an application of (\ref{eqKrb}) 
we obtain the following characterization of the cones $\MK_n^{(r)}$ for $r=0,1$. A matrix $M\in \MS^n$ belongs to $\MK_n^{(0)}$ if and only if 
\begin{align*}
x^TMx=\sigma+\sum_{1\leq i<j\leq n}c_{ij}x_ix_j 
\end{align*}
for some $\sigma\in \Sigma_2$ and some scalars $c_{ij}\geq0 \text{ for }1\leq i<j\leq n$, and   $M$ belongs to $\MK_n^{(1)}$ if and only if 
\begin{align}\label{eqpM}
\Big(\sum_{i=1}^nx_i\Big)x^TMx= \sum_{i=1}^nx_i\sigma_i  + \sum_{1\leq i\leq j\leq k\leq n}c_{ijk}x_ix_jx_k,
\end{align}
for some $\sigma_i\in \Sigma_2$ for $i\in [n]$ and some scalars $c_{ijk}$ for $1\leq i\leq j\leq n$. 
From this, one can also derive the following result.

\begin{lemma}[Characterization of the cones $\MK^{(0)}_n$ and $\MK^{(1)}_n$]\label{lem-K0-K1}

 Let $M\in \MS^n$ be a symmetric matrix. Then the following holds.
 \begin{description}
\item [(1)] $M$ belongs to the cone $\MK_n^{(0)}$ if and only if there exists a positive semidefinite matrix $P\succeq0$ such that $P\leq M$. In other words, 
\begin{align}
\MK_n^{(0)}=\{M\in \MS^n: M=P+N \text{ for some } P\succeq 0 \text{ and } N\geq0\}=\SPN_n. 
\end{align}
\item [(2)] $M$ belongs to the cone $\MK^{(1)}_n$ if and only if there exist symmetric matrices $P(i)$ for $i\in [n]$ satisfying the following conditions:
\begin{description}
\item[(i)] $P(i)\succeq 0$ \  for all $i\in [n]$,
\item[(ii)] $P(i)_{ii}=M_{ii}$  \  for all $i\in [n]$,
\item[(iii)] $2P(i)_{ij}+P(j)_{ii}= 2M_{ij}+M_{ii}$  \ for all $i\ne j\in [n]$,
\item[(iv)] $P(i)_{jk}+P(j)_{ik}+P(k)_{ij} \le M_{ij}+M_{ik}+M_{jk}$ \  for all distinct $i,j,k\in [n]$.
\end{description}
\end{description}
\end{lemma}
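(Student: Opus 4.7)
The plan is to derive both characterizations directly from the reformulation in (\ref{eqKrb}), specialized to $r=0$ and $r=1$, combined with the Gram matrix representation (\ref{sossdp}) that identifies a sum of squares of linear forms $\sigma = x^TPx$ with a positive semidefinite matrix $P$.

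For Part (1), I read off (\ref{eqKrb}) at $r=0$: $M \in \MK^{(0)}_n$ if and only if $x^TMx = \sigma + \sum_{i<j} c_{ij} x_i x_j$ for some $\sigma \in \Sigma_2$ and scalars $c_{ij}\ge 0$. Since $x^TMx$ is homogeneous of degree 2, I may restrict $\sigma$ to its degree-2 homogeneous part, hence write $\sigma = x^TPx$ with $P\succeq 0$ via (\ref{sossdp}). Letting $N$ be the symmetric matrix with $N_{ii}=0$ and $N_{ij}=c_{ij}/2$ for $i<j$ gives $M=P+N$ with $P\succeq 0$, $N\ge 0$. Conversely, any such splitting yields $x^TMx = x^TPx + \sum_i N_{ii}x_i^2 + 2\sum_{i<j} N_{ij}x_ix_j$, where the first two terms together lie in $\Sigma_2$ and the off-diagonal coefficients are nonnegative; so $M\in\MK^{(0)}_n=\SPN_n$.

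For Part (2), I use (\ref{eqKrb}) at $r=1$: $M\in\MK^{(1)}_n$ iff
\[
\Big(\sum_{l=1}^n x_l\Big)\, x^TMx \;=\; \sum_{i=1}^n x_i\sigma_i + \sum_{1\le i<j<k\le n} c_{ijk}\, x_ix_jx_k
\]
for some $\sigma_i\in \Sigma_2$ and $c_{ijk}\ge 0$. Since the LHS is homogeneous of cubic degree, I may again retain only the degree-2 homogeneous parts of the $\sigma_i$, and write $\sigma_i = x^TP(i)x$ with $P(i)\succeq 0$, yielding (i). The proof then reduces to matching coefficients of the three types of cubic monomials on both sides. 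The coefficient of $x_i^3$ on the LHS is $M_{ii}$, and on the RHS only $x_i\sigma_i$ contributes, giving $P(i)_{ii}$; this is (ii). For a monomial $x_i^2 x_j$ with $i\neq j$, the LHS gives $M_{ii}+2M_{ij}$, while on the RHS the only contributions come from $x_i\sigma_i$ (yielding $2P(i)_{ij}$) and $x_j\sigma_j$ (yielding $P(j)_{ii}$); this produces (iii). Finally, for $x_ix_jx_k$ with distinct $i,j,k$, the LHS contributes $2(M_{ij}+M_{ik}+M_{jk})$, while the RHS contributes $2(P(i)_{jk}+P(j)_{ik}+P(k)_{ij})+c_{ijk}$; nonnegativity of $c_{ijk}$ gives (iv).

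The converse direction reverses this calculation: given matrices $P(i)$ satisfying (i)--(iv), I set $\sigma_i := x^TP(i)x\in \Sigma_2$ and define $c_{ijk} := 2(M_{ij}+M_{ik}+M_{jk}) - 2(P(i)_{jk}+P(j)_{ik}+P(k)_{ij})$, which is nonnegative by (iv). The coefficient comparison above, run in reverse, shows that $\sum_i x_i\sigma_i + \sum_{i<j<k} c_{ijk}x_ix_jx_k$ equals $(\sum_l x_l)x^TMx$ precisely when (ii) and (iii) hold. There is no genuine obstacle here beyond bookkeeping; the only conceptual subtlety is justifying the passage from an arbitrary $\sigma_i\in \Sigma_2$ to a homogeneous one, which follows from the fact that the degree-2 part of any sum of squares of affine forms $\sum_k (a_k + L_k(x))^2$ is itself the sum of squares $\sum_k L_k(x)^2$ of linear forms, and from the homogeneity of the identity being imposed.
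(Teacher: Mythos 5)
Your proposal is correct and follows essentially the route the paper itself indicates: the paper derives the degree-$0$ and degree-$1$ characterizations from (\ref{eqKrb}) and then states that Lemma \ref{lem-K0-K1} "can be derived from this" (deferring the details to Parrilo and Bomze--de Klerk), and your argument simply carries out that derivation via the Gram-matrix correspondence (\ref{sossdp}), reduction to homogeneous parts, and coefficient matching. The bookkeeping (coefficients of $x_i^3$, $x_i^2x_j$, $x_ix_jx_k$) is accurate in both directions, so no gap.
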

Claim (1) and the ``if" part in (2) in the above lemma were already proved by Parrilo in \cite{Parrilo-thesis-2000}. The ``only if" part in (2) was proved by Bomze and de Klerk in \cite{Bomze}.

A matrix $P$ is called to be a $\kzeroc$ for $M$ if $P\succeq 0$ and $P\leq M$. Now we show a result that relates the zeros of the form $x^TMx$ with the kernel of its $\kzeroc$s, which will be used later in the chapter. 

\begin{lemma}[\cite{LV2021b}]\label{kernel-k0}
Let $M\in \MK_n^{(0)}$ and let $P$ be a $\kzeroc$ of $M$. If $x\in \mathbb{R}^n_+$ and $x^TMx=0$, then $Px=0$ and $P[S]=M[S]$, where $S=\{i\in[n] \text{ : } x_i>0\}$ is the support of $x$.
\end{lemma}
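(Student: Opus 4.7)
The plan is to exploit the explicit SPN decomposition of $M$ provided by Lemma \ref{lem-K0-K1}(1). Since $M \in \MK_n^{(0)} = \SPN_n$, we may write $M = P + N$ where $P \succeq 0$ and $N \geq 0$ entrywise, which is precisely the content of the hypothesis that $P$ is a $\MK^{(0)}$-certificate of $M$ (after noting that $N := M - P$ must be entrywise nonnegative). The quadratic form then splits as
\[
0 = x^T M x = x^T P x + x^T N x.
\]

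Next I would argue that each summand on the right is individually nonnegative and hence each vanishes. For $x^T P x \geq 0$ this is just $P \succeq 0$; for $x^T N x = \sum_{i,j} N_{ij} x_i x_j \geq 0$ we use that $x \in \mathbb{R}^n_+$ together with $N \geq 0$ entrywise. So both quantities equal $0$. To conclude $Px=0$, I would invoke the standard fact that a positive semidefinite matrix $P$ with $x^T P x = 0$ must satisfy $Px = 0$ (e.g., write $P = LL^T$ and observe that $\|L^T x\|^2 = 0$ implies $L^T x = 0$, hence $Px = LL^T x = 0$).

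For the equality $P[S] = M[S]$ on the support $S = \{i : x_i > 0\}$, I would return to $x^T N x = \sum_{i,j} N_{ij} x_i x_j = 0$: since every term in this sum is nonnegative, each must vanish. For $i, j \in S$ we have $x_i x_j > 0$, forcing $N_{ij} = 0$. Thus $N[S] = 0$ and therefore $P[S] = M[S] - N[S] = M[S]$, as claimed. There is no genuine obstacle in this argument; it is essentially a direct reading of the SPN characterization of $\MK_n^{(0)}$, and the only step worth flagging is the standard linear-algebra passage from $x^T P x = 0$ to $Px = 0$ for $P \succeq 0$.
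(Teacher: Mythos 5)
Your proposal is correct and follows essentially the same argument as the paper's proof: decompose $M=P+N$ with $P\succeq 0$ and $N\ge 0$, split $0=x^TPx+x^TNx$ into two nonnegative summands that must each vanish, deduce $Px=0$ from $P\succeq 0$, and deduce $N_{ij}=0$ for $i,j\in S$ hence $P[S]=M[S]$. The only difference is that you spell out the standard step $x^TPx=0\Rightarrow Px=0$ via a factorization $P=LL^T$, which the paper leaves implicit.
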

\begin{proof}
Since $P$ is a $\kzeroc$  there exists a matrix $N\geq 0$ such that $M=P+N$. Hence, $0=x^TMx=x^TPx +x^TNx$. Then  $x^TPx=0=x^TNx$ as $P\succeq 0$ and $N\geq 0$. This implies $Px=0$ since $P\succeq 0$. On the other hand, since $x^TNx=0$ and $N\geq 0$, we get $N_{ij}=0$ for $i,j\in S$. Hence, $M[S]=P[S]$, as $M=P+N$.
\end{proof}

\subsection{Lasserre-type approximation cones}\label{secconeLAS}

Recall the definitions (\ref{def-COP}) and (\ref{def-COP-sq}) of the copositive cone.
Clearly, in (\ref{def-COP}), 
the nonnegativity condition for $x^TMx$ can be restricted to the simplex $\Delta_n$ and, in (\ref{def-COP-sq}), the nonnegativity condition for $(x^{\circ2})^TMx^{\circ2}$ can be restricted to the unit sphere $\mathbb{S}^{n-1}$.
Based on these observations, one can now  use the positivity  certificate (\ref{cert-putinar}) or 
(\ref{cert-schmudgen}) to certify the nonnegativity on $\Delta_n$ or $\mathbb S^{n-1}$. 
This leads naturally to defining the following cones (as done in \cite{LV-COP_5}): for an integer $r\in \oN$, 
\begin{align}
\LAS_{\Delta_n}^{(r)} & :=\displaystyle \Big\{M\in \MS^n: x^TMx = \displaystyle \ \sigma_0+\sum_{i=1}^n\sigma_i x_i + q \ \text{ for    } \sigma_0\in \Sigma_r, \sigma_i\in \Sigma_{r-1},\    q\in I_{\Delta_n}        \Big\}, \label{eqLASDa} 
\end{align}
\begin{align}
\LAS_{\Delta_n, \MT}^{(r)} & =\Big \{M\in \MS^n: x^TMx  =\displaystyle
\sum_{S\subseteq [n],  |S|\le r}\sigma_Sx^S + q \
 \text{ for  } \sigma_S\in \Sigma_{r-|S|} \text{ and } q\in I_{\Delta_n}\Big\}, \label{eqLASDPa}
 \end{align}
 \begin{align}
\LAS^{(r)}_{\mathbb S^{n-1}} =\Big\{M\in \MS^n:   (x^{\circ 2})^TMx^{\circ 2}= \sigma+ q \text{ for some }  \sigma\in \Sigma_r, q\in I_{\mathbb{S}^{n-1}}\Big\}.
\end{align}
 Clearly, we have $\LAS_{\Delta_n}^{(r)} \subseteq \LAS_{\Delta_n, \MT}^{(r)}$ and,
by Putinar's theorem (Theorem \ref{theorem-putinar}),
\begin{align}\label{interior-LAS}
\text{int}(\COP_n)\subseteq \bigcup_{r\geq0} \LAS_{\Delta_n}^{(r)}, \quad \quad  \text{int}(\COP_n)\subseteq \bigcup_{r\geq0} \LAS_{\mathbb{S}^{n-1}}^{(r)}.
\end{align} 

\subsection{Links between the various approximation cones for $\COP_n$}\label{secconelinks}

In this section, we link the various cones introduced in the previous sections.

\begin{theorem}[\cite{LV-COP_5}]\label{theo-link}
 Let $r\geq 2$ and $n\geq 1$. Then the following holds.
 \begin{align}\label{link}
 \LAS_{\Delta_n}^{(r)}\subseteq\MK_n^{(r-2)}=\LAS_{\Delta_n,\MT}^{(r)}=\LAS_{\mathbb{S}^{n-1}}^{(2r)}.
 \end{align} 
 \end{theorem}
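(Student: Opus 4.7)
The statement asserts one inclusion and two equalities. The inclusion $\LAS_{\Delta_n}^{(r)} \subseteq \MK_n^{(r-2)}$ will follow once I establish the two equalities, via the trivial inclusion $\LAS_{\Delta_n}^{(r)} \subseteq \LAS_{\Delta_n,\MT}^{(r)}$, which holds by definition: a decomposition $x^TMx = \sigma_0 + \sum_{i} \sigma_i x_i + q$ with $\sigma_0 \in \Sigma_r$, $\sigma_i \in \Sigma_{r-1}$ is the special case of the preordering decomposition in which only $S = \emptyset$ and $S = \{i\}$ contribute, and the degree bounds $r - |\emptyset| = r$, $r - |\{i\}| = r-1$ match exactly.

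For $\MK_n^{(r-2)} = \LAS_{\Delta_n,\MT}^{(r)}$, I rely on the reformulation (\ref{eqKrb}) of Theorem~\ref{theoZVP}: $M \in \MK_n^{(r-2)}$ iff $(\sum_i x_i)^{r-2} x^TMx = \sum_{|S| \leq r,\, |S| \equiv r\,(\mmod 2)} \sigma_S x^S$ with $\sigma_S \in \Sigma_{r-|S|}$. For the direction $\MK_n^{(r-2)} \subseteq \LAS_{\Delta_n,\MT}^{(r)}$, observe that $(\sum_i x_i)^{r-2} - 1 \in I_{\Delta_n}$, so subtracting $x^TMx \cdot ((\sum_i x_i)^{r-2} - 1)$ produces the desired preordering form. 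The converse is the main technical step. Starting from $x^TMx = \sum_S \sigma_S x^S + q(\sum_i x_i - 1)$, I first normalize so every contributing set satisfies $|S| \equiv r \pmod{2}$: for an $S$ of the wrong parity, $\sigma_S$ being a sum of squares forces $\deg \sigma_S \le r-|S|-1$, and the congruence $\sigma_S x^S \equiv \sigma_S x^S \cdot \sum_j x_j \pmod{I_{\Delta_n}}$ redistributes this term into contributions indexed by sets of opposite (hence correct) parity, whose SOS coefficients still meet the required degree bounds. I then homogenize the normalized identity to degree $r$ with an auxiliary variable $x_0$: because $r - |S|$ is now even, the homogenization $\tilde{\sigma}_S(x,x_0)$ of each $\sigma_S$ to degree $r-|S|$ remains a sum of squares. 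Substituting $x_0 = \sum_i x_i$ kills the ideal term and yields the representation required by Theorem~\ref{theoZVP} for $\MK_n^{(r-2)}$.

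For $\MK_n^{(r-2)} = \LAS_{\mathbb{S}^{n-1}}^{(2r)}$, the forward inclusion is analogous, using $(\sum_i x_i^2)^{r-2} - 1 \in I_{\mathbb{S}^{n-1}}$. The reverse inclusion uses the same homogenization template but exploits evenness in each variable. From $(x^{\circ 2})^T M x^{\circ 2} = \sigma + q(\sum_i x_i^2 - 1)$ with $\sigma = \sum_i s_i^2 \in \Sigma_{2r}$, I homogenize to degree $2r$ with variable $t$, obtaining $t^{2r-4}(x^{\circ 2})^T M x^{\circ 2} = \tilde{\sigma}(x,t) + \tilde{q}(x,t)(\sum_i x_i^2 - t^2)$ with $\tilde{\sigma}$ a homogeneous SOS. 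Since both the LHS and the ideal polynomial are even in $t$, I keep only the even-in-$t$ part of $\tilde{\sigma}$: splitting each $s_i = s_i^e + s_i^o$ into its even and odd parts in $t$, the even-in-$t$ component of $s_i^2$ is $(s_i^e)^2 + (s_i^o)^2$, still a sum of squares. Substituting $t^2 = \sum_i x_i^2$ (well-defined on even-in-$t$ polynomials) yields $(\sum_i x_i^2)^{r-2}(x^{\circ 2})^T M x^{\circ 2}$ on the LHS; on the RHS, $(s_i^e)^2$ becomes a square, while $(s_i^o)^2 = t^2 \, b_i(x,t^2)^2$ becomes $(\sum_i x_i^2) \, b_i(x,\sum_i x_i^2)^2 = \sum_j (x_j b_i)^2$, still a sum of squares. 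This establishes $M \in \MK_n^{(r-2)}$.

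The main obstacle is the parity bookkeeping underlying both homogenization arguments. Sums of squares occupy only even total degrees, so a Putinar- or preordering-type decomposition does not homogenize in one shot into an SOS certificate of prescribed degree; the parity-reduction step on $\Delta_n$ and the even/odd-in-$t$ split on $\mathbb{S}^{n-1}$ are the technical devices that reconcile the two gradings.
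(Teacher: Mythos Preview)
Your proof is correct, but it takes a different route from the paper's. The paper argues via a cycle of inclusions: it shows $\LAS_{\Delta_n,\MT}^{(r)} \subseteq \LAS_{\mathbb{S}^{n-1}}^{(2r)}$ by the one-line substitution $x \mapsto x^{\circ 2}$ (which turns a preordering certificate on $\Delta_n$ directly into a sphere certificate of degree $2r$), then invokes Theorem~\ref{dKPL-prop} (de Klerk--Laurent--Parrilo) as a black box for the equality $\LAS_{\mathbb{S}^{n-1}}^{(2r)} = \MK_n^{(r-2)}$, and finally closes the loop with $\MK_n^{(r-2)} \subseteq \LAS_{\Delta_n,\MT}^{(r)}$ exactly as you do. You instead attack each equality by proving both inclusions separately: your parity-correction-plus-homogenization argument gives $\LAS_{\Delta_n,\MT}^{(r)} \subseteq \MK_n^{(r-2)}$ directly (without passing through the sphere), and your even/odd-in-$t$ homogenization argument for $\LAS_{\mathbb{S}^{n-1}}^{(2r)} \subseteq \MK_n^{(r-2)}$ is essentially a self-contained proof of the nontrivial direction of Theorem~\ref{dKPL-prop}. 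The paper's approach is shorter and avoids the parity bookkeeping by outsourcing it to the cited result and the $x \mapsto x^{\circ 2}$ trick; your approach is more explicit and stands on its own, at the cost of reproving a known lemma. One small notational slip: in your sphere argument you write ``splitting each $s_i$'' where you mean the homogenized $\tilde s_i$, but the intent is clear.
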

So,  this result shows  that membership in the cones $\MK^{(r)}_n$ can be  characterized via positivity certificates on $\oR^n_+$ or $\oR^n$ of P\'olya- and Reznick-type (using a 'denominator' of the form $(\sum_ix_i)^r$ for some $r\in\oN$), or, alternatively, via `denominator-free' positivity certificates on the simplex or the sphere of Schm\"udgen- and Putinar-type.

Theorem \ref{theo-link}   was implicitly shown in \cite[Corollary 3.9]{LV2021a}. We now sketch the proof. First, the equality $\MK_n^{(r-2)}=\LAS_{\mathbb{S}^{n-1}}^{(2r)}$ follows from the following result.
\begin{theorem}[de Klerk, Laurent, Parrilo \cite{dKLP}]\label{dKPL-prop}
Let $f$ be a homogeneous polynomial of  degree $2d$ and $r\in \oN$. Then, we have $(\sum_{i=1}^nx_i^2)^rf\in \Sigma $ if and only if $f=\sigma+u(\sum_{i=1}^nx_i^2-1)$ for some $\sigma\in \Sigma_{2r+2d}$ and  $u\in \oR[x]$.

In particular, for any $r\ge 2$, we have 
\begin{align}
 \LAS^{(2r)}_{\mathbb S^{n-1}} =\Big\{M\in \MS^n:  \Big(\sum_{i=1}^n x_i^2\Big)^{r-2} (x^{\circ 2})^TMx^{\circ 2} \in \Sigma\Big\}=\MK^{(r-2)}_n.\label{eqLASSb}
 \end{align}
  \end{theorem}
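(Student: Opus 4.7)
The statement has two parts: the general equivalence for a form $f$ of degree $2d$, and the claimed equality $\LAS^{(2r)}_{\mathbb{S}^{n-1}} = \MK^{(r-2)}_n$. The second part is an immediate corollary of the first, obtained by choosing $f := (x^{\circ 2})^T M x^{\circ 2}$ (a form of degree $2d = 4$) and exponent $r-2$, since then the degree bound $\Sigma_{2(r-2)+4} = \Sigma_{2r}$ matches the definition of $\LAS^{(2r)}_{\mathbb{S}^{n-1}}$. So the task reduces to proving the first equivalence. Write $P := \sum_i x_i^2$ throughout.

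The direction ``$\Rightarrow$'' is straightforward. If $P^r f \in \Sigma$, then since $P^r f$ is a form of degree $2r+2d$, it lies in $\Sigma_{2r+2d}$. Using $P^r - 1 = (P-1)(1+P+\cdots+P^{r-1})$, set $\sigma := P^r f$ and $u := -f(1+P+\cdots+P^{r-1})$; a direct check gives $\sigma + u(P-1) = f$.

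The reverse direction is the substantive one. Starting from $f = \sigma + u(P-1)$ with $\sigma \in \Sigma_{2r+2d}$, I would homogenize using a fresh variable $x_0$. Setting $\tilde\sigma(x_0,x) := x_0^{2r+2d}\sigma(x/x_0)$ and $\tilde u(x_0,x) := x_0^{2r+2d-2}u(x/x_0)$, both forms in $(x_0,x)$, and using that $f,P$ are themselves forms of degrees $2d,2$, one obtains the polynomial identity
\begin{equation*}
x_0^{2r} f(x) \;=\; \tilde\sigma(x_0,x) + \tilde u(x_0,x)\bigl(P(x) - x_0^2\bigr),
\end{equation*}
where $\tilde\sigma$ is SOS of forms of degree $r+d$ in $(x_0,x)$. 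Averaging with the identity obtained by $x_0 \to -x_0$ preserves the left-hand side (since $2r$ is even) and replaces $\tilde\sigma,\tilde u$ by their even-in-$x_0$ parts $\tilde\sigma^{\mathrm{ev}},\tilde u^{\mathrm{ev}}$; crucially $\tilde\sigma^{\mathrm{ev}}$ remains SOS as the sum of two SOS. Now substitute $x_0^2 \mapsto P(x)$: the $(P - x_0^2)$ factor vanishes and the left-hand side becomes $P^r f(x)$, yielding $P^r f = \tilde\sigma^{\mathrm{ev}}|_{x_0^2 = P}$.

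The final step is to check that this specialization is SOS in $x$. I would use the explicit decomposition $\tilde\sigma^{\mathrm{ev}} = \sum_j [(\tilde q_j^{\mathrm{ev}})^2 + (\tilde q_j^{\mathrm{odd}})^2]$ obtained by averaging the squares from $\tilde\sigma = \sum_j \tilde q_j^2$ with their $x_0 \to -x_0$ counterparts. Since $\tilde q_j^{\mathrm{ev}}$ is a polynomial in $x_0^2$ and $x$, its specialization is a polynomial in $x$ whose square is SOS; and writing $\tilde q_j^{\mathrm{odd}} = x_0 \cdot s_j(x_0^2, x)$ gives $(\tilde q_j^{\mathrm{odd}})^2\big|_{x_0^2 = P} = P \cdot s_j(P,x)^2 = \sum_i \bigl(x_i\, s_j(P,x)\bigr)^2$, which is SOS thanks to $P$ being itself a sum of squares. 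Summing over $j$ yields an explicit SOS decomposition of $P^r f$. The main obstacle is precisely this last preservation step: the substitution $x_0^2 \mapsto P$ would produce non-polynomial terms if applied to odd-in-$x_0$ pieces, forcing the parity trick; and the leftover $x_0^2$ from odd-squared contributions becomes $P$, which must then be absorbed into a SOS using $P = \sum_i x_i^2$.
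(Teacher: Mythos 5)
The paper itself does not prove Theorem \ref{dKPL-prop}: it is quoted from de Klerk, Laurent and Parrilo \cite{dKLP} and then used as a black box, so there is no internal proof to compare against. Your argument is correct and is essentially the standard route for this equivalence: the ``$\Rightarrow$'' direction via $P^r-1=(P-1)(1+P+\cdots+P^{r-1})$, and the ``$\Leftarrow$'' direction by homogenizing the identity $f=\sigma+u(P-1)$ with a fresh variable $x_0$, symmetrizing under $x_0\mapsto -x_0$, substituting $x_0^2\mapsto P=\sum_{i=1}^n x_i^2$, and absorbing the leftover factor of $P$ coming from the odd parts into squares via $P=\sum_i x_i^2$. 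Two small facts you use implicitly deserve a line each: (i) $\deg u\le 2r+2d-2$, which follows because $u(P-1)=f-\sigma$ has degree at most $2r+2d$ and degrees add in $\mathbb{R}[x]$, so that $\tilde u(x_0,x)=x_0^{2r+2d-2}u(x/x_0)$ is indeed a polynomial; and (ii) any $\sigma\in\Sigma_{2r+2d}$ is a sum of squares of polynomials of degree at most $r+d$ (the top-degree homogeneous parts of the $q_j$ cannot cancel in $\sum_j q_j^2$), so that each homogenization $\tilde q_j=x_0^{r+d}q_j(x/x_0)$ is a polynomial. With these noted, the specialization step and the final SOS decomposition are sound, and the deduction of (\ref{eqLASSb}) from the first part (taking $f=(x^{\circ 2})^TMx^{\circ 2}$, $2d=4$, exponent $r-2$, so $\Sigma_{2(r-2)+4}=\Sigma_{2r}$) is exactly right.
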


Next, the  inclusion $\LAS_{\Delta_n,\MT}^{(r)}\subseteq \LAS_{\mathbb{S}^{n-1}}^{(2r)}$ follows by replacing $x$ by $x^{\circ2}$ in the definition of $\LAS_{\Delta_n,\MT}^{(r)}$. Indeed, if $M\in \LAS_{\Delta_n,\MT}^{(r)}$, then 
$$x^TMx  = \sum_{S\subseteq [n],  |S|\le r}\sigma_Sx^S + q\Big(\sum_{i=1}^nx_i-1\Big) \text{ for  } \sigma_S\in \Sigma_{|S|-r}, q\in \mathbb{R}[x].$$
Then, by replacing $x$ by $x^{\circ2}$, we obtain 
$$(x^{\circ2})^TMx^{\circ2}  = \sum_{\substack{S\subseteq [n]\\ |S|\le r}}\sigma_S(x^{\circ2})\prod_{i\in S}x_i^2 + q(x^{\circ2})\Big(\sum_{i=1}^nx_i^2-1\Big) \text{ for  } \sigma_S\in \Sigma_{|S|-r}, q\in \mathbb{R}[x],$$
where the first summation is a sum of squares of degree at most $2r$, thus showing 
 that $M\in  \LAS_{\mathbb{S}^{n-1}}^{(2r)}$. 

Finally, as the inclusion $\LAS^{(r)}_{\Delta_n}\subseteq \LAS^{(r)}_{\Delta_n,\MT}$ is clear, 
it remains to show that $\MK_n^{(r-2)}\subseteq \LAS_{\Delta_n,\MT}^{(r)}$ in order to conclude the proof of Theorem \ref{theo-link}. For this, we use the formulation (\ref{eqKrb}) of the cones $\MK_n^{(r)}$. Let $M\in \MK_n^{(r-2)}$, then 
$$ \Big(\sum_{i=1}^{n}x_i\Big)^{r-2} x^TMx =\sum_{\substack{S\subseteq [n], |S|\leq r \\ |S|\equiv r\ (\mmod  2)} }\sigma_Sx^S \ \ \text{ for some } \sigma_S\in \Sigma_{r-|S|}.$$
Write  $\sum_{i=1}^nx_i= (\sum_{i=1}^nx_i-1) +1$ and expand 
$(\sum_{i=1}^nx_i)^r$ as $1 + p(\sum_{i=1}^nx_i-1)$ for some $p\in \oR[x]$.
From this, setting $q=-p x^TMx$, we  obtain 
$$x^TMx =\sum_{\substack{S\subseteq [n], |S|\leq r \\ |S|\equiv r\ (\mmod  2)} }\sigma_Sx^S + q\Big(\sum_{i=1}^n x_i-1\Big) \ \ \text{ for some } \sigma_S\in \Sigma_{r+2-|S|}, \ q\in \oR[x],$$
which shows $M\in  \LAS_{\Delta_n,\MT}^{(r)}$.

 It is useful  to note that, in the formulation (\ref{eqLASDPa}) of $\LAS^{(r)}_{\Delta_n,\MT}$,
 we could equivalently require a decomposition of the form
 \begin{equation}\label{eqbeta}
 x^TMx=\sum_{\beta\in \oN^n, |\beta|\le r}\sigma_\beta x^\beta +q\ \text{ for some } \sigma_\beta\in \Sigma_{r-|\beta|}\ \text{ and } q\in I_{\Delta_n},
 \end{equation}
 thus using arbitrary monomials $x^{\beta}$ instead of square-free monomials $x^S$.
This allows to draw a parallel with the definitions of the cones $\MC_n^{(r)}$ (in (\ref{eqCr})) and $\MQ_n^{(r)}$ (in (\ref{eqQr})). Namely,  using the same type of arguments as above,  one can obtain the following analogous  reformulations for the cones $\MC_n^{(r)}$ and $\MQ_n^{(r)}$:
\begin{align}
\MQ_n^{(r)}=\Big\{M\in \MS^n: x^TMx =
\sum_{\substack{\beta\in \mathbb{N}^n\\ |\beta|=r, r+2 }} \sigma_\beta x^\beta + q \ \text{ for  } \sigma_\beta\in \Sigma_{r+2-|\beta|} \text{ and } q\in I_{\Delta_n}\Big\}, \label{eqQrb}
\end{align}
\begin{align}
\MC_n^{(r)}=\{M\in \MS^n: x^TMx= \sum_{\substack{\beta\in \mathbb{N}^n\\|\beta|=r+2}}c_\beta x^\beta+ q \ \text{ for } c_\beta\geq0 \text{ and }q\in I_{\Delta_n}\}.
\end{align}

\begin{example}{Seeing all cones as restrictive Schm\"udgen-type representations of $x^TMx$}
 We illustrate how membership in the cones $\LAS^{(r)}_{\Delta_n}$, $\LAS^{(r)}_{\Delta_n,\MT}$, $\MC^{(r)}_n$, and $\MQ^{(r)}_n$ can also be viewed as `restrictive' versions of membership in the cone $\MK^{(r-2)}_n$. Indeed, as we saw above, $\MK^{(r-2)}_n=\LAS^{(r)}_{\Delta_n,\MT}$ and thus  a matrix $M$ belongs to $\MK^{(r-2)}_n$ if and only if the form $x^TMx$ has a decomposition of the form (\ref{eqbeta}). 
 Then, membership in the cones $\LAS^{(r)}_{\Delta_n}$, $\MC^{(r-2)}_n$, and $\MQ^{(r-2)}_n$ corresponds to restricting  to decompositions that allow only some terms in (\ref{eqbeta}):
\begin{align}\label{link-rep}
\underbrace{\sigma_0 + \sum_{i=1}^nx_i\sigma_i}_{\text{for cones $\LAS_{\Delta_n}^{(r)}$}} + \dots +\overbrace{\sum_{\beta \in \mathbb{N}^n, |\beta|=r-2}x^\beta \sigma_\beta +  \underbrace{\sum_{\beta \in \mathbb{N}^n, |\beta|=r}x^\beta c_\beta}_{\text{for cones $\MC_n^{(r-2)}$}}}^{\text{for cones $\MQ_n^{(r-2)}$}} + \underbrace{q(\sum_{i=1}^nx_i-1)}_{\text{for cones }\begin{cases}\LAS_{\Delta_n}^{(r)}\\ \MQ_n^{(r-2)} \\ \MC_n^{(r-2)} \end{cases}}
\end{align}
\end{example}

\section{Exactness of sum-of-squares approximations for $\COP_n$}\label{secexact}

We have discussed  several hierarchies of conic inner approximations for the copositive cone $\COP_n$. In particular, we have seen that each of them  covers 
the interior of $\COP_n$. 
In this section, we investigate the question of deciding exactness of these hierarchies, where we say that a hierarchy of conic inner approximations is {\em exact} if it covers the full copositive cone $\COP_n$.

\subsection{Exactness of the conic approximations $\MK_n^{(r)}$}\label{secexactK}

We first recall a result from \cite{Diananda}, that shows  equality  in the inclusion $\MK_n^{(0)}\subseteq \COP_n$ for $n\leq 4$.
\begin{theorem}[Diananda \cite{Diananda}]\label{theo-k0}
For $n\leq 4$ we have 
$$\COP_n=\{M\in\MS^n:M=P+N \text{ for some } P\succeq 0, N\geq 0\}=\MK_n^{(0)}\ (=\SPN_n).$$
\end{theorem}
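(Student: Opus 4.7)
The inclusion $\SPN_n = \MK_n^{(0)} \subseteq \COP_n$ is immediate: if $M = P + N$ with $P \succeq 0$ and $N \ge 0$, then $x^T M x = x^T P x + x^T N x \ge 0$ for every $x \in \oR_+^n$; this together with Lemma~\ref{lem-K0-K1}(1) settles the easy direction. The substantive content is the reverse inclusion $\COP_n \subseteq \SPN_n$ for $n \le 4$, and this is what I plan to establish. My plan is to argue by direct case analysis on $n$, handling the base cases $n = 1, 2$ by inspection and the cases $n = 3, 4$ via an explicit decomposition driven by the sign pattern of the off-diagonal entries of $M$. The foundational observation I will use throughout is the following: if $M \in \COP_n$ and $M_{ij} < 0$ for some $i \ne j$, then the $2 \times 2$ principal submatrix $M[\{i,j\}]$ is copositive with a negative off-diagonal entry, so the $n = 2$ case forces $M[\{i,j\}] \succeq 0$, equivalently $M_{ii} M_{jj} \ge M_{ij}^2$.

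For $n = 1$, copositivity just means $M \ge 0$, so $M = 0 + M \in \SPN_1$. For $n = 2$, let $M$ have diagonal entries $a, c$ and off-diagonal entry $b$. Testing on the standard basis vectors gives $a, c \ge 0$. If $b \ge 0$, then $M$ is entrywise nonnegative and $(P, N) = (0, M)$ works. If $b < 0$, optimizing the quadratic form $a x_1^2 + 2 b x_1 x_2 + c x_2^2$ over $x \in \oR_+^2$ with $x_1, x_2 > 0$ forces $a c \ge b^2$, hence $M \succeq 0$ and $(P, N) = (M, 0)$ works.

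For $n = 3$, I would classify $M$ by the number of negative off-diagonal entries. If none, $M \ge 0$ and $(P, N) = (0, M)$. If exactly one is negative, say $M_{12} < 0$, I would take $P$ to be the $3 \times 3$ PSD matrix whose top-left block equals $M[\{1,2\}]$ (PSD by the key observation) and whose remaining entries vanish; then $N = M - P$ has zeros in the $\{1,2\}$-block and retains the originally nonnegative entries $M_{13}, M_{23}, M_{33}$, so $N \ge 0$. If two or three off-diagonal entries are negative, I would construct the PSD component as a sum of rank-one PSD summands, each localised on a negative edge $(i,j)$ and proportional to $(\alpha_i e_i + \alpha_j e_j)(\alpha_i e_i + \alpha_j e_j)^T$ with $\alpha_i \alpha_j < 0$, tuning the coefficients so that the remainder is entrywise nonnegative; the feasibility of this small system can be verified explicitly in dimension three by using the constraints $M_{ii} M_{jj} \ge M_{ij}^2$ on each negative edge.

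For $n = 4$, the same template applies, but the combinatorial explosion of sign patterns among the six off-diagonal entries makes the case analysis substantially heavier. The main obstacle, and the genuinely delicate part of the theorem, is to verify that no configuration of negative entries yields a copositive $4 \times 4$ matrix lying outside $\SPN_4$; that is, one must exclude the existence of an ``exceptional'' copositive matrix at $n = 4$. The cleanest route I know is to pass to the dual cones: the identity $\COP_n = \SPN_n$ is equivalent, by conic duality, to the identity $\mathcal{CP}_n = \mathcal{DNN}_n$ between the completely positive and the doubly nonnegative cones, and the latter can be proven for $n \le 4$ through spectral and Perron--Frobenius arguments establishing that every doubly nonnegative matrix of size at most $4$ admits a nonnegative rank factorization. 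The sharpness of the bound $n \le 4$ is witnessed by the Horn matrix in size $5$, which is an exceptional copositive matrix lying outside $\SPN_5$.
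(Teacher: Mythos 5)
The paper itself does not prove this theorem: it is recalled as a classical result of Diananda, so the only question is whether your outline would constitute a complete proof. It would not, for two concrete reasons. First, in the $n=3$ case with two negative off-diagonal entries, you claim that the feasibility of your rank-one tuning system ``can be verified explicitly \ldots by using the constraints $M_{ii}M_{jj}\ge M_{ij}^2$ on each negative edge.'' Those pairwise constraints cannot suffice: take $M$ with unit diagonal, $M_{12}=M_{13}=-1$ and $M_{23}=0$. It satisfies $M_{ii}M_{jj}\ge M_{ij}^2$ on both negative edges, yet $x^TMx=-2<0$ at $x=(2,1,1)$, so $M\notin\COP_3$ and a fortiori $M\notin\SPN_3$. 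Hence any argument deriving the decomposition only from the edge inequalities would ``prove'' membership in $\SPN_3$ for a non-copositive matrix; a correct proof must invoke the full three-variable copositivity condition (this is exactly where the $n=3$ case becomes delicate). Your simplest subcase does work, and the all-negative pattern can be handled by the standard fact that a copositive matrix with nonpositive off-diagonal entries is positive semidefinite (compare $x^TMx$ with $|x|^TM|x|$), but as written the two-negative case is unproved.

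Second, and more seriously, the case $n=4$ — which you yourself identify as the genuinely delicate part — is entirely outsourced. Passing to duals replaces $\COP_4=\SPN_4$ by $\mathcal{CP}_4=\mathcal{DNN}_4$ (and even this equivalence needs the remark that $\SPN_4$ is a closed cone, so that biduality applies); but the statement that every doubly nonnegative $4\times4$ matrix is completely positive is precisely the dual formulation of the theorem and is of comparable depth, classically proved by a nontrivial argument (Maxfield--Minc type) rather than by the vague ``spectral and Perron--Frobenius arguments'' you gesture at. So the substantive content of the theorem for $n=4$ remains unproven in your proposal, and the $n=3$ case is only partially established. The easy inclusion $\SPN_n\subseteq\COP_n$ and the cases $n=1,2$ are fine.
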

This result does not extend to matrix size $n\geq 5$.  For instance, as we now see,  the {\em Horn matrix} $H$ in (\ref{matHorn})  is copositive, but it does not belong to $\MK_5^{(0)}$.

\begin{example}{The Horn matrix}
The Horn matrix 
\begin{align}\label{matHorn}
H := \left(\begin{matrix} 1 & 1 & -1 & -1 & 1\cr
1 & 1 & 1 & -1 & -1\cr
-1 & 1 & 1 & 1 & -1\cr
-1 & -1 & 1 & 1 & 1 \cr
1 & -1 & -1 & 1 & 1
 \end{matrix}\right)
\end{align}
is copositive. A direct way to show this  is to observe that  $H\in \MK_n^{(1)}$.  Parrilo \cite{Parrilo-thesis-2000} shows this latter fact by giving the following explicit sum of squares decomposition:
\begin{equation}
\begin{split}\label{Horn-k1}
\Big(\sum_{i=1}^5x_i^2\Big)(x^{\circ2})^THx^{\circ2}= &\hspace{0.2cm}x_1^2(x_1^2+x_2^2+x_5^2-x_3^2-x_4^2)^2 \\
&+x_2^2(x_1^2+x_2^2+x_3^2-x_4^2-x_5^2)^2\\
&+x_3^2(x_2^2+ x_3^2+x_4^2-x_5^2-x_1^2)^2\\
&+x_4^2(x_3^2+x_4^2+x_5^2-x_1^2-x_2^2)^2\\
&+x_5^2(x_1^2+x_4^2+x_5^2-x_2^2-x_3^2)^2\\
&+4x_1^2x_2^2x_5^2+ 4x_1^2x_2^2x_3^2\\
&+ 4x_2^2x_3^2x_4^2+ 4x_3^2x_4^2x_5^2\\
&+ 4x_4^2x_5^2x_1^2.
\end{split}
\end{equation}
\end{example}
On the other hand, Hall and Newman \cite{Hall-Newman} show that $H$ does not belong to $\SPN_5$ ($=\MK_5^{(0)}$). We give a short proof of this fact, based on Lemma \ref{kernel-k0}.

\begin{theorem}[Hall, Newman \cite{Hall-Newman}]\label{H-notin-K_0}
The Horn matrix $H$ does not belong to $\MK^{(0)}_5$. Hence, the inclusion $\MK_n^{(0)}\subseteq \COP_n$ is strict for any $n\geq 5$.
\end{theorem}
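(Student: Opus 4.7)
\medskip
\noindent\textbf{Proof plan.} The plan is to exploit Lemma \ref{kernel-k0}, which forces rigid constraints on any potential $\MK^{(0)}$-certificate $P$ of $H$ coming from nonnegative zeros of the quadratic form $x^THx$. I would first list the ``obvious'' zeros of $x^THx$ on $\oR^5_+$: for cyclic indices $i\in \oN/5\oN$, the vectors $u_i:=e_i+e_{i+2}$ all satisfy
\[
u_i^THu_i = H_{ii}+H_{i+2,i+2}+2H_{i,i+2}= 1+1-2 = 0,
\]
since $H$ has ones on the diagonal and the entries $H_{i,i+2}$ (the ``distance-two'' entries in the pentagonal cycle) all equal $-1$. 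So we have five nonnegative zeros $u_1,\ldots,u_5$ of $x^THx$.

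Next, suppose for contradiction that $P$ is a $\kzeroc$ of $H$. Applying Lemma \ref{kernel-k0} to each $u_i$ gives two pieces of information: the principal $2\times 2$ submatrix $P[\{i,i+2\}]$ equals $H[\{i,i+2\}]$, and $Pu_i=0$, i.e., columns $i$ and $i+2$ of $P$ are opposite to each other. In particular $P_{ii}=1$ for all $i$, and column $i$ and column $i+2$ of $P$ are negatives of one another, for every $i\in \oN/5\oN$.

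The key step is to chase these column identities around the 5-cycle. Reading them as $\text{col}_{i+2}(P)=-\text{col}_i(P)$ and iterating along the index sequence $1\to 3\to 5\to 2\to 4\to 1$ (each step adds $2$ modulo $5$), one obtains
\[
\text{col}_1(P)=-\text{col}_3(P)=\text{col}_5(P)=-\text{col}_2(P)=\text{col}_4(P)=-\text{col}_1(P),
\]
so $\text{col}_1(P)=0$, contradicting $P_{11}=1$. Hence $H\notin \MK^{(0)}_5$. The ``hard'' part here is just spotting the right family of zeros $u_i$; once they are in hand, the odd length of the cycle produces the sign contradiction automatically.

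Finally, for the second assertion ($n\geq 5$), I would embed $H$ into larger size by considering $\widetilde H=\begin{pmatrix}H&0\\0&0\end{pmatrix}\in \MS^n$. Clearly $\widetilde H\in \COP_n$. If $\widetilde H=P+N$ with $P\succeq 0$ and $N\geq 0$, then restricting to the principal submatrix indexed by $[5]$ gives $H=P[{[5]}]+N[{[5]}]$ with $P[{[5]}]\succeq 0$ and $N[{[5]}]\geq 0$, placing $H$ in $\MK^{(0)}_5$ and contradicting the first part. Thus $\widetilde H\in \COP_n\setminus \MK^{(0)}_n$, so the inclusion $\MK^{(0)}_n\subseteq \COP_n$ is strict for all $n\geq 5$.
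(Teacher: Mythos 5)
Your proof is correct and follows essentially the same approach as the paper: both apply Lemma \ref{kernel-k0} to the zeros $e_i+e_j$ of $x^THx$ (where $H_{ij}=-1$) to force linear relations among the columns of a putative certificate $P$, the only difference being that you chase signs around the odd cycle to get $\mathrm{col}_1(P)=0$ while the paper concludes all columns are equal, forcing $P=tJ$ and contradicting $P\le H$. Your explicit zero-padding argument for the extension to $n\ge 5$ is a fine way to make precise what the paper leaves implicit.
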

\begin{proof}
Assume, by way of contradiction, that $H\in \MK_5^{(0)}$. Let $P$ be a $\kzeroc$ for $H$, i.e., such that $P\succeq 0$ and $P\le H$, and let $C_1, C_2, \dots, C_5$ denote the columns of $P$. Observe that $u_1=(1, 0, 1, 0 , 0)$ and $u_2=(1,0,0,1,0)$ are zeros of the form $x^THx$. Then, by Lemma \ref{kernel-k0}, $Pu_1=Pu_2=0$.
Hence, $C_1+C_3=C_1+C_4=0$, so that $C_3=C_4$. Using an analogous argument we obtain that $C_1=C_2=\ldots=C_5$, which implies  $P=tJ$ for some scalar $t\geq 0$, where $J$ is the all-ones matrix. This leads to a  contradiction since $P\leq H$.  
\end{proof}
Next, we recall a result of Dickinson, D\"ur, Gijben and Hildebrand \cite{DDGH} that shows exactness of the conic approximation $\MK_5^{(1)}$ for copositive matrices with an all-ones diagonal.

\begin{theorem}[Dickinson, D\"ur, Gijben, Hildebrand \cite{DDGH}]\label{theoDDGH1}
Let $M\in \COP_5$ with $M_{ii}=1$ for all $i\in[5]$. Then $M\in \MK_5^{(1)}$.
\end{theorem}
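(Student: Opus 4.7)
The plan is to reduce the statement to a verification on the extreme rays of $\COP_5$ that meet the unit-diagonal hyperplane, and to handle the resulting structural classes separately. Let $\mathcal{B} := \{M \in \COP_5 : \mathrm{diag}(M) = e\}$; this set is convex, and it is compact since copositivity on supports of size two forces $|M_{ij}| \le 1$ whenever $M_{ii} = M_{jj} = 1$ (nonnegativity of $M_{ii}x_i^2 + 2M_{ij}x_ix_j + M_{jj}x_j^2$ on $\oR^2_+$ yields $M_{ij} \ge -1$, and the upper bound is trivial). By Krein--Milman, every $M \in \mathcal{B}$ is a convex combination of extreme points of $\mathcal{B}$, and each such extreme point generates an extreme ray of $\COP_5$. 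Since $\MK_5^{(1)}$ is a convex cone, it suffices to show that every extreme point of $\mathcal{B}$ lies in $\MK_5^{(1)}$.

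The next step is to invoke the classification of the extreme rays of $\COP_5$ due to Hildebrand, which states that each extreme ray either (a) already lies in $\SPN_5$ (such as rank-one nonnegative matrices and certain boundary PSD-plus-nonnegative matrices), or (b) belongs to the orbit of the Horn matrix $H$ under coordinate permutations and those signed diagonal congruences $M \mapsto \Diag(\varepsilon) M \Diag(\varepsilon)$ (with $\varepsilon \in \{\pm 1\}^5$) that preserve copositivity. Class (a) is handled immediately by the inclusion $\SPN_5 = \MK_5^{(0)} \subseteq \MK_5^{(1)}$ from Lemma \ref{lem-K0-K1}(1).

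For the Horn orbit, Parrilo's explicit decomposition (\ref{Horn-k1}) places $H$ itself in $\MK_5^{(1)}$. Substituting $x_i \mapsto x_{\pi(i)}$ in (\ref{Horn-k1}) transfers this certificate to the permuted matrix $\Pi^T H \Pi$. For the signed diagonal congruences, one repeats Parrilo's construction after tracking signs: the squared factors on the right-hand side of (\ref{Horn-k1}) are even polynomials in each $x_i$, so they are invariant under $x_i \mapsto -x_i$, while the cubic-monomial terms $4 x_i^2 x_j^2 x_k^2$ acquire coefficient signs $\varepsilon_i\varepsilon_j\varepsilon_k$. The signature patterns that preserve copositivity of $DHD$ are precisely those that leave these coefficients nonnegative (else one could find $x \in \oR_+^5$ witnessing $x^T(DHD)x < 0$), so the transformed right-hand side remains a valid $\MK_5^{(1)}$-certificate.

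The main obstacle is Step 2: Hildebrand's classification of extreme rays of $\COP_5$ is a substantial stand-alone theorem, and cleanly delineating which of his families meet the unit-diagonal slice requires careful bookkeeping; the sign-change analysis in the Horn case adds further casework. An alternative avenue would bypass the full classification and instead aim to construct the matrices $P(1), \dots, P(5)$ of Lemma \ref{lem-K0-K1}(2) directly from the entries of $M$, by first analyzing the zero locus of $x^T M x$ on $\Delta_5$ and using Lemma \ref{kernel-k0}-type constraints to pin down the $P(i)$ on the supports of these zeros before extending by a PSD completion argument; this second route, while more self-contained, seems harder to close in a uniform way without recovering essentially the same case split.
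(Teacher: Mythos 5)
You should first note that the chapter does not prove this theorem; it cites \cite{DDGH}, so there is no in-paper proof to match — but your proposal has genuine gaps. The most serious one is that your "classification of extreme rays" is not Hildebrand's theorem. The chapter's own Theorem \ref{extreme-rays} lists \emph{three} classes: extreme matrices lying in $\MK_5^{(0)}$, positive diagonal scalings of the Horn matrix $H$ (up to permutation), and positive diagonal scalings of the matrices $T(\psi)$, $\psi\in\Psi$ (up to permutation). There are no signed diagonal congruences in the picture: a congruence by $\Diag(\varepsilon)$ with some $\varepsilon_i=-1$ does not preserve copositivity in general (the automorphisms of $\COP_n$ are permutations and \emph{positive} diagonal scalings), so your sign-tracking analysis of Parrilo's certificate addresses a case that does not occur, while the entire $T(\psi)$ family — which has unit diagonal and therefore cannot be avoided in any reduction to the unit-diagonal slice — is omitted. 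Proving $T(\psi)\in\MK_5^{(1)}$ is precisely the technical core of the Dickinson--D\"ur--Gijben--Hildebrand result; the chapter needs the full Nie/Lasserre finite-convergence machinery (Theorem \ref{Tpsi}) even for the weaker statement $T(\psi)\in\bigcup_r\MK_5^{(r)}$, and nothing in your proposal touches this. Note also that reducing to extreme rays of $\COP_5$ \emph{without} retaining the unit-diagonal normalization cannot work even in principle: not every positive diagonal scaling $DHD$ lies in $\MK_5^{(1)}$ (the chapter cites \cite{LV2021b} for the characterization of which do), so any argument must use the diagonal constraint in an essential way.

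Two further steps are flawed as written. First, $\mathcal{B}=\{M\in\COP_5:\ M_{ii}=1\}$ is not compact: copositivity on two-element supports gives only $M_{ij}\ge -1$, while $M_{ij}$ can be arbitrarily large and positive (any entrywise nonnegative matrix is copositive), so Krein--Milman does not apply as stated. This is repairable — $\mathcal{B}$ is closed, convex and line-free, its recession cone consists of copositive matrices with zero diagonal, which are entrywise nonnegative and hence in $\MK_5^{(0)}\subseteq\MK_5^{(1)}$ — but you must argue via extreme points plus recession directions. Second, the claim that every extreme point of $\mathcal{B}$ generates an extreme ray of $\COP_5$ is unsupported: the slice has codimension $5$, and extreme points of such slices correspond to faces of the cone of dimension up to $5$, not to extreme rays. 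The analogous statement already fails for the PSD cone, whose unit-diagonal slice (the elliptope) has extreme points of rank greater than one; whether the statement holds for $\COP_5$ is a nontrivial structural assertion that would itself require proof. So the skeleton "pass to the unit-diagonal extreme objects and certify each class" is a reasonable plan, but as written the reduction step is unjustified and the case analysis both misstates Hildebrand's classification and omits the one class where all the difficulty lies.
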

In contrast, the same authors show that the cone $\COP_n$ is never equal to a single cone $\MK_n^{(r)}$ for $n\geq 5$. 

\begin{theorem}[Dickinson, D\"ur, Gijben, Hildebrand \cite{DDGH}]\label{theoDDGH}
For any $n\geq 5$ and $r\geq 0$, we have $\COP_n\neq \MK_n^{(r)}$.
\end{theorem}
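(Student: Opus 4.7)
The strategy has two phases: a reduction from arbitrary $n \geq 5$ to the base case $n=5$, and a construction of explicit witnesses inside $\COP_5$.

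First I would reduce to $n=5$. Assuming a matrix $M \in \COP_5 \setminus \MK_5^{(r)}$ has been produced, consider, for $n \geq 6$, the block matrix
$$\tilde M \ = \ \begin{pmatrix} M & 0 \\ 0 & 0_{n-5}\end{pmatrix} \in \MS^n.$$
The identity $x^T \tilde M x = (x_1,\ldots,x_5)^T M (x_1,\ldots,x_5)$ shows that $\tilde M \in \COP_n$. Conversely, if we had $\tilde M \in \MK_n^{(r)}$, say with an SoS identity
$$\Bigl(\sum_{i=1}^n x_i^2\Bigr)^r (x^{\circ 2})^T \tilde M x^{\circ 2} \ = \ \sum_j p_j(x)^2 \quad \text{with } p_j \in \oR[x_1,\ldots,x_n],$$
then specializing $x_6=\cdots=x_n=0$ and renaming the remaining variables would yield an SoS identity at level $r$ for $M$, contradicting $M \notin \MK_5^{(r)}$. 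Thus it suffices to settle the case $n=5$.

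For the base case $n=5$, the case $r=0$ is exactly Theorem \ref{H-notin-K_0}: the Horn matrix $H$ lies in $\COP_5 \setminus \MK_5^{(0)}$. For $r \geq 1$ the Horn matrix itself no longer suffices since $H \in \MK_5^{(1)}$. The plan is instead to pick a matrix $M_r$ on the boundary of $\COP_5$ whose form $x^T M_r x$ has a sufficiently rich zero set on $\oR_+^5$ to overconstrain any candidate representation of degree $r+2$ as in (\ref{eqKrbb}). A natural family from which to draw such $M_r$ is the 5-parameter Hildebrand family of extreme rays of $\COP_5$, which generalizes the Horn matrix and retains a pentagonal arrangement of zeros that can be deformed. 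The mechanism of exclusion is a generalization of Lemma \ref{kernel-k0} applied at the level of the representation (\ref{eqKrbb}): for any candidate decomposition $(\sum_i x_i)^r x^T M_r x = \sum_\beta \sigma_\beta x^\beta$, plugging in a zero $x^* \in \oR_+^5$ of $x^T M_r x$ forces $\sigma_\beta(x^*)=0$ for every $\beta$ with $\supp(\beta) \subseteq \supp(x^*)$, and the local minimality of the zero propagates further vanishing to derivatives of $\sigma_\beta$ at $x^*$. Iterating over the deformed pentagonal zero set of $M_r$ produces a system of linear constraints on the coefficients of the $\sigma_\beta \in \Sigma_{r+2-|\beta|}$ whose rank, for an appropriate choice of the parameter in the Hildebrand family, strictly exceeds the dimension of the SoS ansatz at level $r$, ruling out membership in $\MK_5^{(r)}$.

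The main obstacle is the final quantitative step: for each $r$ one must choose a parameter in Hildebrand's family so that the number of independent vanishing conditions produced by the zero set and its infinitesimal neighborhoods genuinely exceeds the freedom available in the $\sigma_\beta$'s of degree at most $r+2-|\beta|$. In effect this amounts to showing that the minimal sum-of-squares degree needed to certify copositivity is unbounded as one ranges over the full boundary of $\COP_5$. Carrying out this count relies on the explicit parametric form of Hildebrand's extreme rays together with a careful comparison-of-coefficients argument at each pentagonal zero; an attractive alternative would be a purely structural argument showing that the spectrahedral cone $\MK_5^{(r)}$ cannot realize every extreme ray of the Hildebrand family simultaneously at any fixed level $r$.
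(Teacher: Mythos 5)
Your reduction from $n\geq 6$ to $n=5$ (padding with a zero block and specializing $x_6=\cdots=x_n=0$ in an SoS identity) is fine, and the case $r=0$ via the Horn matrix is exactly Theorem \ref{H-notin-K_0}. But the heart of the theorem is the case $r\geq 1$, and there your argument is only a plan with a gap that I do not think can be closed as stated. The proposed mechanism is a dimension count: the zeros of $x^TM_rx$ in $\oR^5_+$ (and derivative conditions at them) should impose more independent linear conditions than the number of free parameters in a representation $(\sum_i x_i)^r\,x^TMx=\sum_\beta\sigma_\beta x^\beta$ with $\sigma_\beta\in\Sigma_{r+2-|\beta|}$. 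This count goes the wrong way: for a fixed matrix $T(\psi)$ the form has only finitely many zeros in the simplex (five of them, by Lemma \ref{minimizers}), so the vanishing and first-order conditions they impose on each $\sigma_\beta$ number $O(1)$ per $\beta$, while the number of coefficients of each $\sigma_\beta$ grows with $r$. Indeed, no such obstruction can persist for a fixed $\psi$: by Theorem \ref{Tpsi} every $T(\psi)$ (and every positive diagonal scaling of it) lies in some $\LAS^{(r)}_{\Delta_5}\subseteq\MK^{(r)}_5$, consistent with $\COP_5=\bigcup_r\MK^{(r)}_5$ (Theorem \ref{cop5=k}). So your plan really requires, for each $r$, a parameter $\psi_r$ together with a quantitative lower bound on the level at which $T(\psi_r)$ enters the hierarchy; nothing in the proposal produces such a bound, and obtaining degree lower bounds of this kind is precisely the hard, unaddressed step.

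The paper's proof uses a completely different and much simpler mechanism, which you may want to internalize: take a single $M\in\COP_n\setminus\MK^{(0)}_n$ (e.g.\ coming from the Horn matrix) and consider its positive diagonal scalings $DMD$, which are all copositive. If $DMD\in\MK^{(r)}_n$ for \emph{every} $D\in\MD^n_{++}$, then after the substitution $z_i=\sqrt{d_i}\,x_i$ the polynomials $(\sum_i d_i^{-1}z_i^2)^r\sum_{i,j}M_{ij}z_i^2z_j^2$ are all SoS; letting $d_1=1$ and $d_i\to\infty$ for $i\geq 2$ and using closedness of the SoS cone, the limit $(z_1^2)^r\sum_{i,j}M_{ij}z_i^2z_j^2$ is SoS, and since every square in such a decomposition vanishes on $z_1=0$ one can factor out $z_1$ and iterate, concluding that $\sum_{i,j}M_{ij}z_i^2z_j^2$ itself is SoS, i.e.\ $M\in\MK^{(0)}_n$ — a contradiction. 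Thus for every $r$ some scaling $DMD$ lies in $\COP_n\setminus\MK^{(r)}_n$. This avoids Hildebrand's classification and any counting of zero conditions entirely; the only analytic inputs are the closedness of $\Sigma$ and an elementary factoring argument.
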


\begin{proof}
Let $M$ be a copositive matrix that lies outside $\MK_n^{(0)}$.  Clearly, any positive diagonal scaling of $M$ remains copositive, that is, $DMD\in \COP_n$ for any $D\in \MD^n_{++}$. We will show that for any $r\ge 0$ there exists a diagonal matrix $D\in\MD^n_{++}$ such that $DMD\not\in \MK^{(r)}_n$.
Fix $r\ge 0$ and assume, by way of contradiction, that $DMD\in \MK_n^{(r)}$ for any positive diagonal matrix $D$. Then, for all scalars $d_1, d_2, \dots, d_n>0$ the polynomial 
$(\sum_{i=1}^nx_i^2)^r(\sum_{i,j=1}^n M_{ij}d_id_jx_i^2x_j^2)$ is a sum of squares. 
Equivalently, the polynomial $(\sum_{i=1}^nd_i^{-1}z_i^2)^r(\sum_{i,j=1}^nM_{ij}z_i^2z_j^2)$ is a sum of squares in the variables $z_i=\sqrt{d_i}x_i$ ($i=1, \dots, n$). Now we fix $d_1=1$ and we let $d_i\to \infty$ for $i=2, \dots, n$. Since the cone of sums of squares of polynomials is closed (see, e.g.,   \cite[Section~3.8]{monique-survey}), the limit polynomial $(z_1^2)^r(\sum_{i,j=1}^nM_{i,j}z_i^2z_j^2)$ is also a sum of squares in the variables $z_1, \dots, z_n$. Say $(z_1^2)^r(\sum_{i,j=1}^nM_{i,j}z_i^2z_j^2)=\sum_{k=1}^mq_k^2$. 
Then, for each $k$, we have $q_k(z)=0$ whenever $z_1=0$. Hence, if $r\ge 1$, then $z_1$ can be factored out from $q_k$, and we obtain  that $(z_1^2)^{r-1}(\sum_{i,j=1}^nM_{i,j}z_i^2z_j^2)$ is also a sum of squares. After repeatedly using this argument we can conclude that $\sum_{i,j=1}^nM_{i,j}z_i^2z_j^2$ is a sum of squares, that is, $M\in \MK_n^{(0)}$, leading to a contradiction.
\end{proof}

As was recalled earlier, sums of squares of polynomials can be expressed using semidefinite programming. Hence, the cone $\MK^{(r)}_n$ is {\em semidefinite representable}, which means that membership in  it can be modeled using semidefinite programming. 
In \cite{BKT} it is  shown  that $\COP_5$ is {\em not} semidefinite representable, which is thus a stronger result that implies Theorem \ref{theoDDGH}.
On the other hand, it was  shown recently in \cite{SV} that 
every $5\times 5$ copositive matrix belongs to the cone $\MK^{(r)}_5$ for some $r\in \oN$.

\begin{theorem}[Laurent, Vargas \cite{LV-COP_5}; Schweighofer, Vargas \cite{SV}]\label{cop5=k}
We have $\COP_5=\bigcup_{r\geq 0}\MK_5^{(r)}$.
\end{theorem}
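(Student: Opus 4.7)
I split the proof into the zero-diagonal and positive-diagonal cases, and then address the central de-scaling step.

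\emph{Zero diagonal entry.} Suppose $M_{ii}=0$ for some $i$. For any $j\ne i$ and $t\ge 0$, copositivity at $te_i+e_j$ (with $e_i,e_j$ the standard unit vectors) gives $2tM_{ij}+M_{jj}\ge 0$; letting $t\to\infty$ forces $M_{ij}\ge 0$. Thus the $i$-th row and column of $M$ are entrywise nonnegative. The remaining $4\times 4$ principal submatrix $M'$ is copositive, hence in $\SPN_4=\MK_4^{(0)}$ by Diananda's theorem (Theorem \ref{theo-k0}); write $M'=P'+N'$ with $P'\succeq 0$ and $N'\ge 0$. Extending $P'$ by a zero row and column and absorbing the nonnegative row/column of $M$ into $N'$ yields $M\in\SPN_5=\MK_5^{(0)}$.

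\emph{Strictly positive diagonal.} Now assume $M_{ii}>0$ for all $i$ and set $D=\Diag(M_{11}^{-1/2},\ldots,M_{55}^{-1/2})$, so $DMD$ is copositive with all-ones diagonal. By Dickinson--D\"ur--Gijben--Hildebrand (Theorem \ref{theoDDGH1}), $DMD\in\MK_5^{(1)}$, i.e.\ $\bigl(\sum_i x_i^2\bigr)(x^{\circ 2})^TDMDx^{\circ 2}\in\Sigma$. Substituting $x_i=M_{ii}^{1/4}y_i$ converts this identity into
\[
\Big(\sum_{i=1}^{5}\sqrt{M_{ii}}\,y_i^{2}\Big)\cdot (y^{\circ 2})^TMy^{\circ 2}\in\Sigma,
\]
so $(y^{\circ 2})^TMy^{\circ 2}$ already has a sum-of-squares-with-denominator certificate, but with the positive-definite quadratic $h(y)=\sum_i\sqrt{M_{ii}}\,y_i^2$ in place of the Reznick denominator $\sum_iy_i^2$.

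\emph{Main obstacle: upgrading the denominator.} The remaining task is to produce some $r\in\oN$ with $\bigl(\sum_iy_i^2\bigr)^r(y^{\circ 2})^TMy^{\circ 2}\in\Sigma$. Theorem \ref{theoDDGH} rules out a uniform $r$ across all such $M$, so the argument must be matrix-specific; in particular, one cannot simply conjugate the level-$1$ certificate for $DMD$ back by $D^{-1}$. My plan is to pass through the simplex reformulation of Theorem \ref{theo-link}: $M\in\bigcup_r\MK_5^{(r)}$ is equivalent to finite convergence of Schm\"udgen's hierarchy for $\min\{x^TMx:x\in\Delta_5\}$, whose optimum is $0$ by copositivity. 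I would then invoke a preordering variant of Nie's criterion (Theorem \ref{theo-Nie}) at every global minimizer, i.e.\ every zero $u\in\Delta_5$ of $u^TMu$. The hardest part will be controlling the KKT data---constraint qualification, strict complementarity, and second-order sufficiency---at degenerate zeros of $x^TMx$. This should be feasible only by leveraging the fine structure of copositive forms in dimension $5$ (for instance Hildebrand's classification of the extreme rays of $\COP_5$, the very restricted support patterns that zeros of $x^TMx$ on $\Delta_5$ can exhibit, or a tailored perturbation $M\mapsto M+\varepsilon E$ that regularizes the minimizer set). Carrying out that case analysis, tying it to the certificate produced in the previous step, is precisely where the depth of the Schweighofer--Vargas argument lies.
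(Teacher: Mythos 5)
There is a genuine gap. Your two preliminary reductions are fine: a copositive $5\times 5$ matrix with a zero diagonal entry does land in $\SPN_5=\MK_5^{(0)}$ by the argument you give, and for positive diagonal the DDGH result (Theorem \ref{theoDDGH1}) plus the substitution $x_i=M_{ii}^{1/4}y_i$ correctly yields $\bigl(\sum_i\sqrt{M_{ii}}\,y_i^2\bigr)(y^{\circ 2})^TMy^{\circ 2}\in\Sigma$. But this weighted-denominator certificate is essentially a restatement of what is to be proved (by a linear change of variables it is equivalent to membership of a diagonal scaling of $M$ in $\MK_5^{(1)}$), and the entire content of the theorem --- upgrading it to a Reznick denominator $(\sum_i y_i^2)^r$ for $M$ itself --- is left as a ``plan''. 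Worse, the plan you outline cannot work in the decisive case. A Nie-type criterion (Theorem \ref{theo-Nie}, or any preordering variant of it) presupposes optimality conditions that force the problem $\min\{x^TMx:x\in\Delta_5\}$ to have finitely many minimizers; but for every positive diagonal scaling $DHD$ of the Horn matrix the form $x^TDHDx$ has infinitely many zeros in $\Delta_5$ (the paper even notes $H\notin\LAS^{(r)}_{\Delta_5}$ for all $r$), so SCC/SOSC fail on a continuum of minimizers. The perturbation idea $M\mapsto M+\varepsilon E$ also fails: interior matrices are covered at some level $r_\varepsilon$, but $r_\varepsilon$ is unbounded as $\varepsilon\to 0$ (this is exactly the phenomenon behind Theorem \ref{theoDDGH}, and for $n\ge 6$ it is why exactness fails despite the interior being covered), so no membership of the limit matrix can be inferred.

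For comparison, the paper's route is: reduce to matrices on extreme rays of $\COP_5$ and invoke Hildebrand's classification (Theorem \ref{extreme-rays}); for the scalings of the matrices $T(\psi)$, the form has exactly five zeros in $\Delta_5$ satisfying the support condition of Theorem \ref{opt-DMD}, so the Nie-based argument does apply and even gives membership in some $\LAS^{(r)}_{\Delta_5}$ (Theorem \ref{Tpsi}); for the scalings of the Horn matrix, where your machinery breaks down, one needs the genuinely different Schweighofer--Vargas theorem (Theorem \ref{DHD-theorem}), proved via pure states in real algebraic geometry together with the characterization of the scalings $DHD$ lying in $\MK^{(1)}_5$, and not via KKT-type case analysis. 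Your proposal identifies where the difficulty sits but supplies neither the reduction to extreme matrices nor any substitute for the pure-states argument, so the statement is not proved.
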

We will return to this result in Section \ref{secCOP5}, where we will give some hints on the strategy and  tools that are used for the proof.

\medskip
It is known that the result from Theorem \ref{cop5=k}  does not extend to matrix size $n\geq 6$. To show this, we recall the following  result. 

\begin{proposition}[\cite{LV2021b}]\label{theo-cop}
Let $M_1\in \COP_n$ and $M_2\in \COP_m$ be two copositive matrices. Assume $M_1\notin \MK_n^{(0)}$ and there exists $0\neq z\in \mathbb{R}^m_+$ such that $z^TM_2z=0$. Then  we have
\begin{equation}
\left(
\begin{array}{c|c}
M_1 & 0\\
\hline
0 & M_2
\end{array}
\right) \in \COP_{n+m} \setminus \bigcup\limits_{r\in \mathbb{N}}\MK_{n+m}^{(r)}.
\end{equation}
\end{proposition}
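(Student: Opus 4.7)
Copositivity of the displayed block-diagonal matrix is immediate: for any $(x,y)\in\mathbb{R}^{n+m}_+$ with $x\in \mathbb{R}^n_+$ and $y\in \mathbb{R}^m_+$, the quadratic form splits as $x^T M_1 x + y^T M_2 y\ge 0$ since $M_1,M_2$ are copositive. The substance of the proposition is therefore to show that the block-diagonal matrix, which I denote $M$, does not belong to $\MK_{n+m}^{(r)}$ for any $r \in \mathbb{N}$.

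I would argue by contradiction: suppose $M \in \MK_{n+m}^{(r)}$ for some $r \ge 0$. Writing the coordinates of $\mathbb{R}^{n+m}$ as $(u,v) \in \mathbb{R}^n \times \mathbb{R}^m$ in accordance with the block structure, the definition of $\MK_{n+m}^{(r)}$ yields
\begin{align*}
\Bigl(\sum_{i=1}^n u_i^2 + \sum_{j=1}^m v_j^2\Bigr)^r \cdot \Bigl( (u^{\circ 2})^T M_1 u^{\circ 2} + (v^{\circ 2})^T M_2 v^{\circ 2} \Bigr) \in \Sigma[u,v].
\end{align*}

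The key idea is to exploit the zero $z$ of $M_2$ via the substitution $v_j := \sqrt{z_j}$, which is well-defined because $z \in \mathbb{R}^m_+$. Plugging concrete real values for some of the variables of a SOS polynomial preserves the SOS property (if $p=\sum_k q_k^2$, then $p(u,\sqrt z)=\sum_k q_k(u,\sqrt z)^2$ is a SOS in $u$). Under this substitution $v^{\circ 2}$ becomes $z$, so the term $(v^{\circ 2})^T M_2 v^{\circ 2}$ evaluates to $z^T M_2 z = 0$; simultaneously $\sum_j v_j^2$ becomes the constant $s := \sum_j z_j$, which is strictly positive since $z \ne 0$ and $z\ge 0$. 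We are left with
\begin{align*}
\Bigl(\sum_{i=1}^n u_i^2 + s\Bigr)^r \cdot (u^{\circ 2})^T M_1 u^{\circ 2} \in \Sigma[u].
\end{align*}

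Finally, I would apply the fact that the lowest-degree homogeneous part of a SOS polynomial is itself a SOS, which is precisely the tool already used in the Motzkin-type example in the excerpt (cited there as \cite[Lemma~4]{LV2021b}). Expanding $(\sum_i u_i^2 + s)^r = \sum_{k=0}^r \binom{r}{k} s^{r-k} (\sum_i u_i^2)^k$, the lowest-degree homogeneous part in $u$ of the left-hand side is the degree-$4$ form $s^r \cdot (u^{\circ 2})^T M_1 u^{\circ 2}$. Hence this form is a SOS, and dividing by $s^r>0$ gives $(u^{\circ 2})^T M_1 u^{\circ 2} \in \Sigma$, i.e., $M_1 \in \MK_n^{(0)}$, contradicting the hypothesis. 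The main conceptual step is designing the substitution $v=z^{\circ 1/2}$: it annihilates the $M_2$-block exactly (thanks to $z^T M_2 z=0$) while freezing $\sum_j v_j^2$ to the positive constant $s$, which is the precise setup needed to recover $M_1$ via the lowest-degree-part argument.
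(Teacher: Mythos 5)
Your proof is correct, and it follows essentially the same route as the source the paper cites for this proposition: evaluate the sum-of-squares identity defining $\MK^{(r)}_{n+m}$ at $v=z^{\circ 1/2}$ (killing the $M_2$-block via $z^TM_2z=0$ and freezing $\sum_j v_j^2$ at a positive constant), then invoke the fact that the lowest-degree homogeneous part of a sum of squares is again a sum of squares (the same \cite[Lemma 4]{LV2021b} tool the chapter uses in its Motzkin-type example) to conclude $M_1\in\MK^{(0)}_n$, a contradiction. No gaps; the only trivial point left implicit is that $M_1\neq 0$, which is anyway forced by $M_1\notin\MK^{(0)}_n$ and is immaterial to the argument.
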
 
Now we give explicit examples of copositive matrices of size $n\ge 6$ that do not belong to any of the cones $\MK_n^{(r)}$.

\begin{example}{Examples of copositive matrices outside $\bigcup_{r\geq0}\MK_n^{(r)}$}
Let  $M_1=H$ be the Horn matrix, known   to be copositive with $H\notin \MK_n^{(0)}$. For the matrix $M_2$ we first consider the $1\times 1$ matrix $M_2=0$ and, as a second example, we consider $ M_2=\begin{pmatrix} 1 & -1 \\ -1 & 1 \end{pmatrix}\in \COP_2$. Then,  as an application of Proposition~\ref{theo-cop},  we obtain 
\begin{equation}\label{eqexmat}
\left(
\begin{array}{c|c}
H & 0\\
\hline
0 & 0
\end{array}
\right) \in \COP_6 \setminus \bigcup\limits_{r\in\mathbb{N}}\MK_{6}^{(r)}, \quad  \left(
\begin{array}{c|c}
H & 0\\
\hline
0 & {\begin{array}{cc}1 & -1 \\ -1 & 1
\end{array}}
\end{array}
\right)\in \COP_7\setminus \bigcup\limits_{r\in \mathbb{N}}\MK_7^{(r)}.
 \end{equation}
The leftmost matrix in (\ref{eqexmat}) is  copositive, it has all its diagonal entries equal to $0$ or $1$, and it does not belong to any of the cones $\MK^{(r)}_6$. Selecting for $M_2$ the zero matrix of  size $m\ge 1$  gives  a matrix in $\COP_n\setminus \bigcup_{r\ge 0}\MK^{(r)}_n$ for any size $n\ge 6$.
The rightmost matrix in (\ref{eqexmat}) is  copositive,  it has all its diagonal entries equal to 1, and  it does not lie in any of the cones $\MK_7^{(r)}$.
More generally, if we  select the matrix $M_2= {1\over m-1}(mI_m-J_m)$, which is positive semidefinite with $e^TM_2e=0$, then we obtain a matrix in $\COP_n\setminus \bigcup_{r\ge 0} \MK^{(r)}_n$ with an all-ones diagonal for any size $n\ge 7$.
In contrast, as mentioned in Theorem \ref{theoDDGH1}, 
any copositive $5\times 5$ matrix with an all-ones diagonal  belongs to $\MK_5^{(1)}$. 
The situation for the case of $6\times 6$ copositive matrices remains open.
\end{example}

\begin{example}{Question}
Is it true that any $6\times 6$ copositive matrix with an all-ones diagonal belongs to $\MK^{(r)}_6$ for some $r\in \oN$?
\end{example}

\subsection{Exactness of the conic approximations $\LAS^{(r)}_{\Delta_n}$}
\label{section-membership-LAS}
We begin with the characterization of the matrix sizes $n$ for which the hierarchy of cones $\LAS_{\Delta_n}^{(r)}$ is exact.

\begin{theorem}[Laurent, Vargas \cite{LV-COP_5}]\label{theoexactLAS}
We have $\COP_2=\LAS_{\Delta_2}^{(3)}$, and the inclusion $\bigcup_{r\ge 0}\LAS_{\Delta_n}^{(r)}\subseteq \COP_n$ is strict for any $n\ge 3$.
\end{theorem}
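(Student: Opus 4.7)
The proof splits into two independent parts: the equality $\COP_2=\LAS_{\Delta_2}^{(3)}$, and the strict inclusion for $n\ge 3$. The inclusion $\LAS_{\Delta_2}^{(3)}\subseteq \COP_2$ is immediate, since any decomposition $x^TMx=\sigma_0+\sigma_1x_1+\sigma_2x_2+q(x_1+x_2-1)$ with $\sigma_i\in\Sigma$ yields $x^TMx\ge 0$ on $\Delta_2$, and hence on $\oR^2_+$ by homogeneity. For the reverse inclusion, I would invoke Diananda's theorem (Theorem~\ref{theo-k0}) to write $M\in\COP_2$ as $P+N$ with $P\succeq 0$ and $N\ge 0$ entrywise. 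Then $x^TPx+N_{11}x_1^2+N_{22}x_2^2\in\Sigma_2\subseteq\Sigma_3$, so it remains only to rewrite $2N_{12}x_1x_2$ in the target form. For this, I would use the simplex identity
\[
x_1x_2 \;=\; x_2^2\cdot x_1+x_1^2\cdot x_2+(-x_1x_2)(x_1+x_2-1),
\]
which exhibits $2N_{12}x_1x_2$ as $\sigma_1x_1+\sigma_2x_2+q(x_1+x_2-1)$ with $\sigma_1=2N_{12}x_2^2$, $\sigma_2=2N_{12}x_1^2\in\Sigma_2$, completing the decomposition.

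For the strict inclusion when $n\ge 3$, I propose as witness the matrix $M_n\in\MS^n$ with $(M_n)_{12}=(M_n)_{21}=1/2$ and all other entries zero, so that $x^TM_nx=x_1x_2\ge 0$ on $\oR^n_+$ and hence $M_n\in\COP_n$. Assume for contradiction that $M_n\in\LAS_{\Delta_n}^{(r)}$ for some $r$, giving a decomposition
\[
x_1x_2 \;=\; \sigma_0+\sum_{i=1}^n\sigma_i x_i + q\Bigl(\sum_{i=1}^n x_i-1\Bigr)
\]
with $\sigma_0,\ldots,\sigma_n\in\Sigma$ and $q\in\oR[x]$. Adapting Example~\ref{example-finite} to this setting, I would substitute $x\leftarrow x+u_t$ where $u_t=te_1+(1-t)e_n\in\Delta_n$ for $t\in(0,1)$ (the hypothesis $n\ge 3$ ensures that $1,2,n$ are pairwise distinct indices). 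Comparing constant terms at $x=0$ forces $\sigma_0(u_t)=\sigma_1(u_t)=\sigma_n(u_t)=0$; since these are sums of squares attaining their global minimum value $0$ at $u_t$, all their first partial derivatives also vanish at $u_t$. Comparing the coefficient of $x_1$ on both sides then gives $q(u_t)=0$, and comparing the coefficient of $x_2$ yields $\sigma_2(u_t)=t$ as an identity of polynomials in $t$. But $\sigma_2(u_t)$ is a sum of squares of univariate polynomials in $t$, and hence has even degree, contradicting $\deg(t)=1$.

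The first part reduces to a combination of Diananda's theorem with a single elementary simplex identity, so the main obstacle lies in the second part: keeping track of all cross-terms and verifying that every contribution except the crucial one vanishes. The key observation that cleans up the bookkeeping is that a sum of squares attaining its minimum value $0$ at a point must have a vanishing gradient there; this eliminates at once all first-order contributions from $\sigma_0,\sigma_1,\sigma_n$ along the curve $u_t$ and isolates the contradictory identity $\sigma_2(u_t)=t$.
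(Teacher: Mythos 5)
Your proposal is correct and follows essentially the same route as the paper: for $n=2$ an explicit degree-3 Schm\"udgen-type decomposition of $x^TMx$ modulo $I_{\Delta_2}$ (the paper writes the positive-semidefinite-plus-nonnegative split explicitly as $(\sqrt a x_1-\sqrt b x_2)^2+2(c+\sqrt{ab})x_1x_2$ instead of citing Diananda, and then uses the same identity $x_1x_2\equiv x_1^2x_2+x_1x_2^2$ modulo the ideal), and for $n\ge 3$ the same witness $x_1x_2$ ruled out by the substitution $x\mapsto x+u_t$ argument of Example \ref{example-finite}. Your write-up has the minor merit of carrying out that second argument for general $n\ge 3$, whereas the paper spells it out only for $n=3$.
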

\begin{proof}
First, assume $M=\left(\begin{matrix}a & c \cr c & b\end{matrix}\right)\in \COP_2$, we show $M\in \LAS^{(3)}_{\Delta_2}$. Note that $a,b\ge 0$ and $c\ge -\sqrt{ab}$ (using the fact that $u^TMu\ge 0$ with $u=(1,0),$ $ (0,1),$ and $(\sqrt b,\sqrt a)$). Then we can write
$x^TMx= (\sqrt ax_1 -\sqrt b x_2)^2 + 2(c+\sqrt{ab})x_1x_2$, which, modulo the ideal $I_{\Delta_2}$,  is equal to 
$ (\sqrt ax_1 -\sqrt b x_2)^2 (x_1+x_2)+ 2(c+\sqrt{ab})(x_2^2x_1+x_1^2x_2)$, thus showing   
$M\in \LAS^{(3)}_{\Delta_2}$.

For $n=3$, the  matrix
\begin{align}\label{matrix-M}
 M:= \begin{pmatrix} 0 & 1 & 0 \\ 1 & 0 & 0 \\ 0 & 0 & 0 \end{pmatrix}
\end{align}
is copositive (since nonnegative), but does not belong to any of the cones $\LAS_{\Delta_3}^{(r)}$. 
To see this, assume, by way of contradiction, that $M\in\LAS^{(r)}_{\Delta_3}$ for some $r\in\oN$. Then the polynomial $x^TMx =2x_1x_2$ has a decomposition as in (\ref{no-finite}). However, we showed in the related example (end of Section \ref{secpop}) that such a decomposition does not exist.
\end{proof}

\begin{example}{Some differences between the cones $\LAS_{\Delta_n}^{(r)}$ and $\MK_n^{(r)}$}
By Theorems \ref{theo-link} and \ref{theoexactLAS}, we have $\bigcup_r \LAS^{(r)}_{\Delta_n} \subseteq \bigcup_r\MK^{(r)}_n$, with equality if $n=2$. This inclusion is strict for any $n\ge 3$. Indeed, the 
 matrix $M$ in (\ref{matrix-M}) is an example of a matrix that does not belong to any cone $\LAS_{\Delta_3}^{(r)}$ while it belongs to the cone $\MK_3^{(0)}$ (because $M$  is copositive and $\COP_3=\MK_3^{(0)}$, in view of Theorem \ref{theo-k0}). 

Another example is the Horn Matrix $H$. As observed in (\ref{Horn-k1}), $H\in \MK_5^{(1)}$ and it can be shown that $H\notin \LAS_{\Delta_5}^{(r)}$ for any $r$ (see \cite{LV-COP_5}). The proof  exploits the structure of the (infinitely many) zeros of the form $x^THx$ in $\Delta_5$.
\end{example}

We just saw two examples of copositive matrices that do not belong to any cone $\LAS_{\Delta_n}^{(r)}$. In both cases, the structure of the {\em infinitely many} zeros plays a crucial role. 
We will now discuss some tools that can be used to show membership in some cone $\LAS^{(r)}_{\Delta_n}$ 
in the case when the quadratic form $x^TMx$ has {\em finitely many} zeros in  $\Delta_n$.

\medskip
First, recall that, if a matrix $M$ lies in the interior of the cone $\COP_n$, then it belongs to some cone $\LAS^{(r)}_{\Delta_n}$ (see relation (\ref{interior-LAS})). Therefore we  now assume that $M$ lies on the boundary of $\COP_n$, denoted by $\partial \COP_n$.
The next result shows that, if the quadratic form $x^TMx$ has finitely many zeros in $\Delta_n$ and if these zeros satisfy an additional technical condition, then $M$ belongs to some cone $\LAS_{\Delta_n}^{(r)}$.   

\begin{theorem}[Laurent, Vargas \cite{LV-COP_5}]\label{opt-DMD}
Let $M\in \bCOP_n$. Assume that the quadratic form $p_M:= x^TMx$ has finitely many zeros in $\Delta_n$ and that, for every zero $u$ of $p_M$ in $\Delta_n$, we have $(Mu)_i>0$ for all $i\in [n]\setminus \supp(u)$.
Then, $M\in \bigcup_{r\geq0}\LAS_{\Delta_n}^{(r)}$ and, moreover, $DMD\in \bigcup_{r\geq0}\LAS_{\Delta_n}^{(r)}$ for all $D\in \MD_{++}^n$.
\end{theorem}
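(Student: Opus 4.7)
The plan is to apply Nie's theorem (Theorem~\ref{theo-Nie}) to the polynomial optimization problem
$f^* = \min\{x^TMx : x\in \Delta_n\}$,
with defining data $g = (x_1,\ldots,x_n)$ and $h = \sum_i x_i - 1$. Since $M$ is copositive we have $f^*\ge 0$, and since $M\in \bCOP_n$ the form $p_M := x^TMx$ admits at least one zero in $\Delta_n$, so $f^* = 0$. By hypothesis, the set of global minimizers is finite, equal to the set of zeros of $p_M$ in $\Delta_n$. The Archimedean condition holds for the standard description of the simplex. If we verify CQC, SCC, and SOSC at each such minimizer $u$, then Nie's theorem will produce a representation $p_M \in \MM(g)_r + I(h)_r$ for some $r\in \oN$, which is precisely the assertion $M\in \LAS_{\Delta_n}^{(r)}$.

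Fix a zero $u\in \Delta_n$ of $p_M$ and let $S = \supp(u)\ne \emptyset$, $J(u) = [n]\setminus S$. The active gradients are $\nabla g_j(u) = e_j$ for $j\in J(u)$ and $\nabla h(u) = e$, whose union $G(u)$ is linearly independent (as $S\ne \emptyset$ ensures $e \notin \mathrm{span}\{e_j : j\in J(u)\}$), giving CQC. The stationarity condition reads $2Mu = \lambda e + \sum_{j\in J(u)} \mu_j e_j$, so $(Mu)_i = \lambda/2$ for $i\in S$. Taking the inner product with $u$,
$0 = u^TMu = \sum_{i\in S} u_i (Mu)_i = (\lambda/2)\sum_{i\in S} u_i = \lambda/2$,
hence $\lambda = 0$. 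Then $\mu_i = 2(Mu)_i$ for $i\in J(u)$, and the assumption $(Mu)_i > 0$ for $i\notin S$ yields $\mu_i > 0$, which is SCC.

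To verify SOSC, note that $\nabla^2 L(u) = 2M$ and
$G(u)^{\perp} = \{v\in \oR^n : v_j = 0 \text{ for } j\in J(u),\ e^Tv = 0\}$,
i.e., vectors supported on $S$ with $\sum_{i\in S} v_i = 0$. Suppose, for contradiction, that some $0\ne v \in G(u)^{\perp}$ satisfies $v^TMv = 0$. Since $u_i > 0$ on $S$ and $v$ is supported on $S$ with $e^Tv = 0$, the point $u + tv$ belongs to $\Delta_n$ for every $t$ in a small open interval around $0$. Expanding,
$p_M(u+tv) = u^TMu + 2t(Mu)^Tv + t^2 v^TMv = 0 + 0 + 0 = 0$,
using $(Mu)^Tv = \sum_{i\in S}(Mu)_i v_i = (\lambda/2)\sum_{i\in S} v_i = 0$. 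This exhibits infinitely many zeros of $p_M$ in $\Delta_n$, contradicting the hypothesis. Hence SOSC holds, and Nie's theorem yields the first claim.

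For the second claim, I observe that $DMD$ inherits every hypothesis. Indeed $DMD\in \COP_n$, and the map $x\mapsto Dx/(e^TDx)$ is a bijection (preserving supports) between the zeros of $x^TDMDx$ in $\Delta_n$ and those of $p_M$ in $\Delta_n$, so $DMD$ has finitely many zeros in $\Delta_n$ and $DMD\in \bCOP_n$. Moreover, if $x\in \Delta_n$ is a zero of $x^TDMDx$ with associated $\bar z = Dx/(e^TDx)\in \Delta_n$, then for $i\notin \supp(x) = \supp(\bar z)$ one has $(DMDx)_i = D_{ii}(e^TDx)(M\bar z)_i > 0$, as required. Applying the first part to $DMD$ then yields $DMD\in \LAS^{(r)}_{\Delta_n}$ for some $r$ (possibly depending on $D$). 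The main obstacle in this argument is the verification of SOSC, where one must combine the finiteness of the zero set with the vanishing of $(Mu)_i$ on $\supp(u)$ (a consequence of $\lambda = 0$) to rule out zero-curvature directions along which $p_M$ would remain identically zero.
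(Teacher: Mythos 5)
Your proof is correct and follows essentially the same route as the paper: it recasts membership in $\bigcup_{r}\LAS^{(r)}_{\Delta_n}$ as finite convergence of the Lasserre hierarchy for the standard quadratic program $\min\{x^TMx : x\in\Delta_n\}$, checks the Archimedean condition together with (CQC), (SCC), (SOSC) at the finitely many zeros, applies Theorem \ref{theo-Nie}, and transfers the hypotheses to $DMD$ via the support-preserving bijection $x\mapsto Dx/(e^TDx)$ between zero sets, exactly as in Proposition \ref{bla} (your direct line-segment argument for (SOSC) replaces the paper's use of the fact that $M[\supp(u)]\succeq 0$, to the same effect). The only nitpick: failure of (SOSC) means some $0\ne v\in G(u)^{\perp}$ has $v^TMv\le 0$, and you only treat $v^TMv=0$; the case $v^TMv<0$ is ruled out by your own expansion, since then $p_M(u+tv)=t^2\,v^TMv<0$ at a point of $\Delta_n$, contradicting copositivity (or simply invoke the second-order necessary condition (SONC)).
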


The proof of Theorem \ref{opt-DMD} relies on  following an optimization approach, which enables using the result from Theorem \ref{theo-Nie} about finite convergence of the Lasserre hierarchy. 
For this, consider the following standard quadratic program 
\begin{align}\label{sqp}
\min\{x^TMx: x\in \Delta_n\}.
\end{align} 
First, since $M\in \partial\COP_n$ the optimal value of problem (\ref{sqp}) is zero and thus a vector $u\in \Delta_n$ is a global minimizer of problem (\ref{sqp}) if and only if $u$ is a zero of $x^TMx$.
Next, observe that, as a direct consequence of the definitions,  showing membership in some cone $\LAS^{(r)}_{\Delta_n}$ amounts to showing finite convergence of the Lasserre hierarchy for problem (\ref{sqp}).
\ignore{
In this section we will study sufficient conditions for the copositive matrix $M$ for belonging to the union $\bigcup_{r\geq 0}\LAS_{\Delta_n}^{(r)}$. Consider the following standard quadratic program 
\begin{align}\label{sqp}
p_M^*=\min\{x^TMx: x\in \Delta_n\}.
\end{align} 
Since $M$ is copositive we have $p^*\geq 0$. If $p^*>0$, then $M\in \text{int}(\COP_n)$ and thus, in view of (\ref{interior-LAS}), $M\in \bigcup_{r\geq0}\LAS_{\Delta_n}^{(r)}$. Hence, we restrict our attention to the cases where $p_M^*=0$, i.e., $M\in \partial\COP_n$. With the assumption $p^*_M=0$, it is easy to notice that, $M\in \bigcup_{r\geq 0}\LAS_{\Delta_n}^{(r)}$ if and only if the Lasserre hierarchy (\ref{lasserre-hierarchy}) for problem (\ref{sqp}) has finite convergence. 
}
\begin{example}{ Linking membership in  $\LAS^{(r)}_{\Delta_n}$ to finite convergence of Lasserre hierarchy}
Assume $M\in \partial\COP_n$. Then, $M\in \bigcup_{r\geq0}\LAS_{\Delta_n}^{(r)}$ if and only if the Lasserre hierarchy (\ref{lasserre-hierarchy}) applied to  problem (\ref{sqp}) (for matrix $M$)  has finite convergence.
\end{example}

Now, in order to study the finite convergence of the Lasserre hierarchy for problem (\ref{sqp}), we will apply the result of Theorem \ref{theo-Nie} to the special case of problem~(\ref{sqp}).
First, we observe that the Archimedean condition holds. For this, note that, for any $i\in [n]$, we have  
$$1-x_i= 1-\sum_{k=1}^n x_k +\sum_{k\in[n]\setminus \{i\}}x_k, \quad 1-x_i^2 = {(1+x_i)^2\over 2}(1-x_i) + {(1-x_i)^2\over 2}(1+x_i).$$
This implies $n-\sum_{i=1}^n x_i^2 \in \mathcal{M}(x_1,\ldots,x_n) +I_{\Delta_n}$, 
thus showing that the Archimedean condition holds. 

In \cite{LV-COP_5}  it is shown that the strict complementarity condition (SCC) holds at a global minimizer 
$u$ of problem (\ref{sqp}) if and only if $(Mu)_i>0$ for all $i\in [n]\setminus \supp(u)$. 
It is also shown there that, if problem (\ref{sqp}) has finitely many minimizers, then the second order sufficiency condition (SOSC) holds at each of them.  These two facts (roughly) allow us to apply the result from Theorem \ref{theo-Nie} and to conclude the proof of Theorem \ref{opt-DMD}.
The exact technical details are summarized in the next result.

\begin{proposition}[\cite{LV-COP_5}]\label{bla}
Let $M\in \partial\COP_n$ and $D\in \MD^n_{++}$.  Assume the form $x^TMx$ has finitely many zeros in $\Delta_n$. Then the following holds.
\begin{enumerate}
\item[(i)] (SCC) holds at a  minimizer $u$ of problem (\ref{sqp}) (for $M$) 
if $(Mu)_i>0$ for all $i\in [n]\setminus \supp(u)$.
\item[(ii)] (SOSC) holds at every minimizer of problem (\ref{sqp}) (for $M$).
\end{enumerate}
In addition, if the optimality conditions (SCC) and (SOSC) hold at every minimizer of problem (\ref{sqp}) for the matrix $M$, then they also hold  for every minimizer of problem (\ref{sqp}) for the matrix $DMD$.
\end{proposition}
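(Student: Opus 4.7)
The plan is to exploit the very simple structure of the constraints defining $\Delta_n$ (namely $g_j(x)=x_j$ for $j\in [n]$ and $h_1(x)=\sum_ix_i-1$) to directly compute the KKT multipliers and then verify (SCC) and (SOSC) from first principles. First observe that at any $u\in\Delta_n$, the vector $u$ is nonzero, so $J(u)=[n]\setminus\supp(u)\ne [n]$, and the gradients $\{e_j : j\in J(u)\}\cup\{e\}$ are linearly independent; hence (CQC) holds automatically at every feasible point, and the KKT multipliers are uniquely determined. Writing the KKT equation $2Mu=\lambda e+\sum_{j\in J(u)}\mu_je_j$ yields $2(Mu)_i=\lambda$ for $i\in\supp(u)$ and $\mu_i=2(Mu)_i-\lambda$ for $i\in J(u)$. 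Using $M\in\bCOP_n$ and that $u$ is a minimizer of (\ref{sqp}), we have $u^TMu=0$; combined with $\sum_{i\in\supp(u)}u_i=1$, this gives $\lambda/2=\sum_{i\in\supp(u)}u_i(Mu)_i=u^TMu=0$. Hence $\lambda=0$ and $\mu_i=2(Mu)_i$ for $i\in J(u)$, from which (i) is immediate: (SCC) says exactly that each $\mu_i>0$ for $i\in J(u)$, which is the hypothesis $(Mu)_i>0$ for all $i\in [n]\setminus\supp(u)$.

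For (ii), the Lagrangian is $L(x)=x^TMx-\lambda(\sum_ix_i-1)-\sum_{j\in J(u)}\mu_jx_j$ with Hessian $\nabla^2L(u)=2M$, and
\[
G(u)^{\perp}=\Big\{v\in\oR^n : v_j=0\ \text{for all}\ j\in J(u),\ \textstyle\sum_{i=1}^nv_i=0\Big\}.
\]
For any such $v$ and $|t|$ sufficiently small, the perturbation $u+tv$ still lies in $\Delta_n$ (coordinates in $\supp(u)$ stay strictly positive, coordinates outside vanish, and the total sum is preserved). Since $(Mu)_i=0$ on $\supp(u)$ (by the KKT analysis above) and $v_i=0$ off $\supp(u)$, one has $u^TMv=0$, so $(u+tv)^TM(u+tv)=t^2\,v^TMv$. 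Copositivity of $M$ forces $v^TMv\ge 0$, and if equality held for some $v\ne 0$, then $\{u+tv : |t|<\varepsilon\}$ would be a one-parameter family of zeros of $x^TMx$ in $\Delta_n$, contradicting the finitely-many-zeros hypothesis. Hence $v^TMv>0$ for every nonzero $v\in G(u)^{\perp}$, establishing (SOSC).

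For the additional claim concerning $DMD$, the key tool is the bijection $\phi:u\mapsto Du/\|Du\|_1$ between the zero sets (equivalently, the minimizer sets) of $x^T(DMD)x$ and $x^TMx$ on $\Delta_n$; this is well-defined and invertible because $D\in\MD^n_{++}$, and one checks directly that $\phi(u)^TM\phi(u)=u^T(DMD)u/\|Du\|_1^2$ and $\supp(\phi(u))=\supp(u)$. In particular $DMD$ also has finitely many zeros in $\Delta_n$, so (SOSC) for $DMD$ follows from (ii) applied to $DMD$. For (SCC), given a minimizer $\tilde u$ of $x^T(DMD)x$ on $\Delta_n$ and writing $w=\phi(\tilde u)$, a direct computation gives $(DMD\tilde u)_i=D_{ii}\|D\tilde u\|_1\,(Mw)_i$ for each $i$; since $D_{ii}$ and $\|D\tilde u\|_1$ are strictly positive, the sign of $(DMD\tilde u)_i$ matches that of $(Mw)_i$, so (SCC) at $w$ for $M$ (which holds by hypothesis, using $\supp(w)=\supp(\tilde u)$) is equivalent to (SCC) at $\tilde u$ for $DMD$.

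The main substantive step is the (SOSC) argument, where copositivity of $M$ and finiteness of the zero set must be used in tandem to exclude nonzero $v\in G(u)^{\perp}$ with $v^TMv=0$; everything else is routine verification of KKT multipliers and transparent bookkeeping under diagonal scaling.
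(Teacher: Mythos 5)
Your proposal is correct, and while its overall skeleton (compute the KKT multipliers, characterize (SCC) by positivity of $Mu$ off the support, get (SOSC) from finiteness of the zero set, transfer everything through a diagonal-scaling bijection of minimizers) matches the paper's, the two substantive steps are argued by a genuinely different mechanism. The paper's proof (from \cite{LV-COP_5}) routes both through Diananda's lemma: at a zero $u$ with support $S$ the principal submatrix $M[S]$ is positive semidefinite; this yields $M[S]\tilde u=0$, hence $(Mu)_i=0$ on $S$ for the multiplier computation, and for (SOSC) it converts $a^TM[S]a=0$ into $M[S]a=0$, which combined with a separate claim (finiteness implies at most one zero per support) forces $a=0$. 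You avoid the psd lemma entirely: you read $(Mu)_i=0$ on $\supp(u)$ directly off the stationarity equation $2Mu=\lambda e+\sum_{j\in J(u)}\mu_je_j$ together with $\lambda=2u^TMu=0$, and you prove (SOSC) by the segment perturbation $(u+tv)^TM(u+tv)=t^2\,v^TMv$ for $v\in G(u)^{\perp}$, where copositivity gives $v^TMv\ge 0$ and equality would create a continuum of zeros in $\Delta_n$, contradicting finiteness. Your route is more elementary and self-contained (no auxiliary lemma, no uniqueness-per-support claim), and it makes transparent exactly where copositivity and finiteness enter; the paper's route via $M[S]\succeq 0$ costs the extra lemma but yields structural byproducts (kernel information and uniqueness of the zero with a given support) that are reused elsewhere in their development. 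Your treatment of the scaling claim is essentially the paper's correspondence $u\mapsto Du/\norm{Du}_1$ (stated there with $D^{-1}$, i.e., in the reverse direction), supplemented by the explicit sign identity $(DMD\tilde u)_i=D_{ii}\norm{D\tilde u}_1(Mw)_i$, and it correctly uses the two-sided form of the (SCC) characterization that your own computation in (i) establishes.
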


\ignore{
Now we translate the result of Theorem \ref{theo-Nie} for studying the finite convergence of Lasserre hierarchies to the special case of standard quadratic programs. First, we observe that the Archimedean condition holds. For any $i\in [n]$, we have  
$$1-x_i= 1-\sum_{k=1}^n x_k +\sum_{k\in[n]\setminus \{i\}}x_k, \quad 1-x_i^2 = {(1+x_i)^2\over 2}(1-x_i) + {(1-x_i)^2\over 2}(1+x_i).$$
This implies $n-\sum_ix_i^2 \in \mathcal{M}(x_1,\ldots,x_n)_2 +\langle 1-\sum_i x_i\rangle_4$, thus showing the Archimedean condition holds. Now, we analyze the optimality conditions (CQC), (SCC), and (SOSC) for Problem (\ref{sqp}). The constrain qualification condition (CQC) for the minimizer $u\in \Delta_n$ reads
\begin{align*}\label{CQC-sqp}
\{e_i, e: i \in [n]\setminus \supp{(u)}\} \quad \text{ is linearly independent}.
\end{align*}
Hence (CQC) is always satisfied. Now we study the conditions (SCC) and (SOSC). For this, we recall a result from \cite{Diananda} about the support of optimal solutions for problem (\ref{sqp})
\begin{lemma} \cite[Lemma 7 (i)]{Diananda} \label{support-psd}
Let $M\in \COP_n$   and let $x\in \mathbb{R}_+^n$ be such that $x^TMx=0$. Let $S=\supp(x)$ be the support of $x$. Then $M[S]$, the principal submatrix of $M$ indexed by $S$, is positive semidefinite.  
\end{lemma}
\begin{proof}
Let $\tilde{x}=x|_S$ be the restriction of $x$ to the coordinates indexed by $S$, so $\tilde{x}^TM[S]\tilde{x}=0$.  Assume by contradiction that $M[S]$ is not positive semidefinite. Then there exists $y\in \mathbb{R}^{S}$ such that $y^TM[S]y<0$ and we can  assume that {$y^TM[S]\tilde{x} \le 0$} (else replace $y$ by $-y$).  Since all entries of $\tilde{x}$ are positive, there exists $\lambda\geq 0$ such that the vector $\lambda \tilde{x}+y$ has all its entries positive. Thus,  $(\lambda\tilde{x}+y)^TM[S](\lambda\tilde{x}+y)= \lambda^2\tilde{x}^TM[S]\tilde{x}+ 2\lambda\tilde{x}^TM[S]y+ y^TM[S]y <0$, contradicting that $M[S]$ is copositive. 
\end{proof}
Now we give a characterization for the minimizers of (\ref{sqp}) that satisfy the strict complementarity condition (SCC)
\begin{lemma}\label{scc-diag}
Let  $M\in \bCOP_n$, and let $u$ be a minimizer of problem (\ref{sqp}). The  strict complementarity condition (SCC) holds at $u$ if and only if  $\supp(Mu)=[n]\setminus \supp(u)$ or, equivalently,  $(Mu)_i>0$ 
 for all $i\in [n]\setminus \supp(u)$.
  \end{lemma}

  \begin{proof}
  Let $S=\supp(u)$. We first prove that $(Mu)_i=0$ for any $i\in S$. 
 Let $\tilde{u}=u|_{S}$ denote the restriction of vector $u$ to the coordinates indexed by $S$. Then, we have  $0=u^TMu=\tilde{u}^TM[S]\tilde{u}$. By Lemma \ref{support-psd}, $M[S]$ is positive semidefinite, and thus  $\tilde{u}\in\text{Ker}(M[S])$. Thus,  $0=(M[S]\tilde{u})_i=(Mu)_i$ for any $i\in S$. This shows 
 $\supp(Mu)\subseteq [n]\setminus S$.
 Hence equality $\supp(Mu)= [n]\setminus S$ holds if and only if   $(Mu)_i=\sum_{j\in \supp(u)} M_{ij}u_j>0$ for all $i\in [n]\setminus \supp(u)$. It suffices now to  show the link to (SCC).
 
 In problem (\ref{sqp}) the strict complementarity condition (SCC) reads:
$$Mu = \lambda e + \sum_{j\in[n]\setminus S}\mu_je_j \quad \text{ with } \mu_j>0 \text{ for } j\in [n]\setminus S.$$
By looking at the coordinate indexed by $i\in S$ we obtain that $0=(Mu)_i=\lambda$. Hence, $(Mu)_j=\mu_j$ for any $j\in [n]\setminus S$.  Therefore (SCC) holds if and only if  $(Mu)_j>0$ for all $j\in [n]\setminus S$.
 \end{proof}
It was observed, e.g. in \cite{Nie},  that if the three optimality conditions (CQC), (SCC), and (SOSC) hold at every global minimizer then Problem (\ref{sqp}) has finitely many global minimizers. On the other hand, in \cite{LV-COP_5}, it was shown that if a standard quadratic program has finitely many minimizers, then (SOSC) holds at all of them.
\begin{lemma}\label{finite-sosc}
Let $M\in \bCOP_n$, so that problem (\ref{sqp}) has optimal value zero. If (\ref{sqp})   has finitely many minimizers, then (SOSC) holds  at every global minimizer.
\end{lemma}

\begin{proof}
Assume $M\in \bCOP_n$ and (\ref{sqp}) has finitely many minimizers.
We first prove that, given $S\subseteq [n]$,  problem (\ref{sqp}) has at most one optimal solution with support $S$. For this, assume by contradiction that $u\ne v\in \Delta_n$ are solutions of $x^TMx=0$ with support $S$. By Lemma \ref{support-psd} the matrix $M[S]$ is positive semidefinite. Let $\tilde{u}$ and $\tilde{v}$ be the restrictions of the vectors $u$ and $v$ to the entries indexed by $S$. Hence, $\tilde{u}^TM[S]\tilde{u}=\tilde{v}^TM[S]\tilde{v}=0$, and thus $M[S]\tilde{u}=M[S]\tilde{v}=0$. This implies that every convex combination of $\tilde{u}, \tilde{v}$ belongs to the kernel of $M[S]$, so that the form $x^TM[S]x$ has infinitely many zeros on $\Delta_{|S|}$. Hence,  $x^TMx$ has infinitely many zeros on $\Delta_{n}$, contradicting the assumption. 

Let $u$ be a minimizer of problem (\ref{sqp}) with support $S$ and consider as above  its restriction $\tilde u\in \oR^{|S|}$.
 Observe that the \textit{second order sufficiency condition} (SOSC) for problem (\ref{sqp}) at $u$ reads 
$$ v^TMv > 0  \text{ for all } v\in \mathbb{R}^n\setminus \{0\} \text{ such that } \sum_{i=1}^{n}v_i=0 \text{ and } v_j=0\quad \forall j\in [n]\setminus S,$$
$$\text{or,  equivalently, } \quad a^TM[S]a> 0  \text{ for all } a\in \mathbb{R}^{|S|} \setminus {\{0\}} \text{ such that } \sum_{i\in S}a_i=0.$$
Assume that $a^TM[S]a=0$, we show $a=0$. Since $M[S]\succeq 0$ we have that $M[S]a=0$, so that $M[S](\lambda \tilde{u} +a)=0$ for all $\lambda\in \mathbb{R}$. Pick $\lambda>0$ large enough so that all entries of $\lambda \tilde{u}+a$ are positive. Then $\lambda \tilde{u} + a$ should be a multiple of $\tilde{u}$ because $u$ is the only minimizer over the simplex with support $S$. Combining with  the fact that $e^Ta=0$ this implies $a=0$.
\end{proof}
Now we consider the problem \ref{sqp} associated to the matrix $DMD$ for a positive diagonal matrix $D$. Observe that the minimizers of this problem are in correspondence with the minimizers of problem (\ref{sqp}) for $M\in \partial \COP_n$. Indeed, for $x\in \Delta$ we have that $x^TMx=0$ if and only if $y^TDMDy=0$ for $y=\frac{D^{-1}u}{\norm{D^{-1}u}_1}\in \Delta_n$. Using this fact we obtain the following result. The proof follows from the previous fact combined with Lemmas  \ref{scc-diag} and \ref{finite-sosc}.

\begin{theorem}\label{propequiscaling} 
If the optimality conditions hold at every minimizer of problem (\ref{sqp}) for a matrix $M\in \partial \COP_n$, then, for every  $D\in \MD^n_{++}$, they also hold at every minimizer of problem (\ref{sqp}) for matrix $DMD$.
\end{theorem}

\subsubsection{Criterion for showing membership in $\bigcup_{r\geq0}\LAS_{\Delta_n}^{(r)}$}
By combining the results from Theorems \ref{theo-Nie} and \ref{propequiscaling} and the fact that the Archimedean property is satisfied (as observed and the beginning of the section) we obtain the following result.
\begin{theorem}\label{opt-DMD}
Let $M\in \bCOP_n$ and assume problem (\ref{sqp}) has finitely many minimizers. Assume moreover that, for every minimizer $u$ of problem (\ref{sqp}), we have $(Mu)_i>0$ for all $i\in [n]\setminus \supp(u)$.
Then $DMD\in \bigcup_{r\geq0}\LAS_n^{(r)}$ for all $D\in \MD_{++}^n$.
\end{theorem}
}

The following example shows a copositive matrix $M$ for which the form $x^TMx$ 
has a unique zero in $\Delta_n$; however $M$ does not belong to  $\bigcup_{r\geq0}\MK_n^{(r)}$, and thus it also does not belong to  $\bigcup_{r\geq0}\LAS_{\Delta_n}^{(r)}$ (in view of relation (\ref{link})).
Hence, the condition on the support of the zeros in Theorem \ref{opt-DMD} cannot be omitted.

\begin{example}{A copositive matrix with a unique zero, that does not belong to any cone $\MK^{(r)}_n$}
Let $M_1$ be a matrix lying in $\text{int}(\COP_n)\setminus \MK^{(0)}_n$.
Such a matrix exists for any $n\geq 5$.
As an example for $M_1$, one may take the Horn matrix $H$ in (\ref{matHorn}), in which we replace all entries  1 by $t$, where  $t$ is a given scalar such that $1<t<\sqrt 5-1$ (see  \cite{LV2021b}).
By Theorem \ref{theo-cop} we have 
\begin{equation}
M:=\left(
\begin{array}{c|c}
M_1 & 0\\
\hline
0 & \begin{array}{cc}
1 & -1 \\
-1 & 1
\end{array}
\end{array}
\right) \in \COP_{n+2}\setminus \bigcup_{r\ge 0}\MK_{n+2}^{(r)}.
\end{equation}
Now we prove that the quadratic form $x^TMx$ has a unique zero in the simplex. For this, let $x\in \Delta_{n+2}$ such that $x^TMx=0$. As $M_1$ is strictly copositive and $y:=(x_1,\ldots,x_n)$ is a zero of the quadratic form $y^TM_1y$ it follows that $x_1=\ldots=x_n=0$. Hence  $(x_{n+1}, x_{n+2})$ is  a zero of the quadratic form $x_{n+1}^2 - 2x_{n+1}x_{n+2}+x_{n+2}^2$ in the simplex $\Delta_2$ and thus $x_{n+1}=x_{n+2}=1/2$. This shows that  the only zero of the quadratic form $x^TMx$ in the simplex $\Delta_n$ is $x=(0,0,\dots, 0, \frac{1}{2},\frac{1}{2})$, as desired.
\end{example}

\subsection{The cone of $5\times 5$ copositive matrices}\label{secCOP5}

 In this section we return to the cone $\COP_5$, more specifically, to  
 the result in Theorem~\ref{cop5=k}  claiming that $\COP_5=\bigcup_r \MK^{(r)}_5$.
 Here we give a sketch of proof for (some of) the main arguments that are used to show this result.
 
 As a starting point, observe that it suffices to show that every $5\times 5$ copositive matrix that lies on an extreme ray of $\COP_5$ (for short, call such a matrix  {\em extreme}) belongs to some cone $\MK^{(r)}_5$.
Then, as a crucial ingredient, we use the fact that the extreme matrices in $\COP_5$ have been fully characterized by Hildebrand \cite{Hildebrand}.
Note that, if $M$ is an extreme matrix in $\COP_n$, then the same holds for all its positive diagonal scalings $DMD$ where $D\in\MD^n_{++}$.
Hildebrand \cite{Hildebrand} introduced  the following matrices 
\begin{align*}
T(\psi)=
\begin{pmatrix}
1 & -\cos \psi_4 & \cos(\psi_4+\psi_5) & \cos(\psi_2+\psi_3) & -\cos\psi_3 \\
-\cos \psi_4 & 1 & -\cos\psi_5 & \cos(\psi_5+\psi_1) & \cos(\psi_3+\psi_4)  \\
\cos(\psi_4+\psi_5) & -\cos\psi_5 & 1 & -\cos\psi_1 & \cos(\psi_1+\psi_2)  \\
\cos(\psi_2+\psi_3) & \cos(\psi_5+\psi_1) & -\cos\psi_1 & 1 & -\cos\psi_2  \\
-\cos\psi_3 & \cos(\psi_3+\psi_4) & \cos(\psi_1+\psi_2)  & -\cos\psi_2 & 1  \\
\end{pmatrix},
\end{align*}
where $\psi\in \oR^5$, which he used to  
prove the following theorem.
 
 \begin{theorem}[Hildebrand \cite{Hildebrand}]\label{extreme-rays}
The extreme matrices $M$ in  $\COP_5$ can be divided into the following three categories:
\begin{description}
\item [(i)] $M\in \MK_n^{(0)}$,
\item [(ii)] $M$ is (up to row/column permutation)  a positive diagonal scaling of the Horn matrix $H$,
\item [(iii)]   $M$ is (up to row/column permutation)  a positive diagonal scaling  of a matrix $T(\psi)$ for some $\psi \in \Psi$, where the set $\Psi$ is defined by
\begin{align}\label{psi}
\Psi =\Big\{\psi\in \mathbb{R}^5 :  \sum_{i=1}^5 \psi_i < \pi, \ \psi_i>0 \text{ for } i\in [5]\Big\}.
\end{align}
\end{description}
\end{theorem}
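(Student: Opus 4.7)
The plan is to reduce Hildebrand's classification to a structural analysis of the zero set of the quadratic form $p_M = x^TMx$ on the simplex $\Delta_5$, exploiting the fact that extremality forces this zero set to be rich enough to determine $M$ up to scaling. Since positive diagonal scalings $M \mapsto DMD$ with $D \in \MD^5_{++}$ preserve both copositivity and extremality and the statement of the theorem is invariant under such scalings, one may first normalize the diagonal of $M$. A preliminary case distinction disposes of matrices with a zero diagonal entry: if $M_{ii}=0$, then evaluating $p_M$ at $e_i + t e_j$ for small $t>0$ forces $M_{ij}\geq 0$ for every $j$, so the $i$-th row and column of $M$ are nonnegative; writing $M$ as the sum of its cross entries at index $i$ (a nonnegative matrix) and the remaining $4\times 4$ principal block (which is copositive, hence in $\SPN_4=\MK_4^{(0)}$ by Theorem~\ref{theo-k0}) places $M$ in $\MK_5^{(0)}$, category~(i). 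Hence we may assume $M_{ii}>0$ for all $i$ and, after rescaling, that $M$ has unit diagonal.

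Next, I would study the minimal zeros of $p_M$ in $\Delta_5$. By Lemma~\ref{support-psd}, the principal submatrix $M[\supp(u)]$ of any zero $u$ is positive semidefinite, which combined with the unit-diagonal normalization severely restricts the off-diagonal entries of $M$ on zero supports: a 2-element support $\{i,j\}$ forces $M_{ij}=-1$, and a 3-element support forces $M[\{i,j,k\}]$ to be a rank-at-most-$2$ PSD matrix with unit diagonal (a 2-parameter trigonometric family). Extremality enters through the following perturbation principle: if $M$ were not extreme, it would admit a direction $M' \notin \oR M$ such that $M + tM' \in \COP_5$ for $t$ in an open interval around $0$, and this, together with appropriate first- and second-order conditions at the zeros of $p_M$, imposes strong vanishing conditions on $M'$ as a quadratic form on these zeros and their tangential structure. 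Equivalently, the minimal zeros of $p_M$ must be ``numerous enough'' in a precise sense (involving their supports and the kernels of the associated principal submatrices) to pin down the $14$ free entries of a unit-diagonal $5\times 5$ symmetric matrix up to scaling.

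The final step is a combinatorial case analysis of the possible minimal-zero support patterns compatible with both the richness condition and the PSD constraints on each principal submatrix $M[\supp(u)]$. A careful enumeration shows that, up to a permutation of the indices $\{1,\ldots,5\}$, only two admissible types survive for extreme matrices outside $\SPN_5$. The first is a configuration of five 2-element supports arranged in a 5-cycle; combined with the forced values $M_{ij}=-1$ on the cycle, this pins $M$ down, up to a positive diagonal scaling, to be the Horn matrix $H$ (category~(ii)). The second is a configuration of five 3-element supports arranged cyclically (each of the form $\{i,i+1,i+2\}$ modulo $5$), whose PSD constraints together with the unit diagonal yield a 5-parameter family naturally parametrized by angles $\psi_1,\ldots,\psi_5$ matching exactly the form $T(\psi)$ with $\psi \in \Psi$ (category~(iii)); the open conditions $\psi_i>0$ and $\sum_i\psi_i<\pi$ encode the nondegeneracy requirements ensuring that $T(\psi)$ remains strictly copositive off the five distinguished 3-element faces and does not collapse into $\SPN_5$ or a diagonal scaling of $H$. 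The main obstacle is precisely this enumeration: ruling out every other conceivable support pattern (short cycles, triangles, stars, disconnected configurations, mixtures of 2- and 3-element supports) and verifying that the two surviving patterns do produce genuinely extreme matrices requires a long case-by-case argument, and matching the natural angle parametrization with Hildebrand's explicit trigonometric formula for $T(\psi)$ is a nontrivial computation in its own right.
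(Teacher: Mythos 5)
This theorem is quoted in the chapter from Hildebrand's paper and is not proved there, so there is no in-paper argument to measure you against; your proposal has to stand on its own as a proof of the classification, and as it stands it does not. Your preliminary reductions are fine: diagonal scalings preserve extremality, and the zero-diagonal case is correctly dispatched (if $M_{ii}=0$ then copositivity forces the $i$-th row and column to be nonnegative, and splitting off that row/column leaves a $4\times 4$ copositive block which lies in $\SPN_4$ by Theorem \ref{theo-k0}, so $M\in\MK_5^{(0)}$). Likewise the local observations about zeros are correct: a zero of $x^TMx$ in $\Delta_5$ with support $\{i,j\}$ and unit diagonal forces $M_{ij}=-1$, and a $3$-element support forces a singular PSD $3\times 3$ block, which is where the trigonometric parametrization comes from. (Minor slip: a unit-diagonal $5\times 5$ symmetric matrix has $10$ free off-diagonal entries, not $14$; the count $14$ is the projective dimension of $\MS^5$ without the diagonal normalization.)

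The genuine gap is that everything that makes the theorem hard is deferred rather than proved. First, the ``perturbation principle'' is only gestured at: to use the zeros you need a precise linear-algebraic criterion for extremality, namely that $M$ spans an extreme ray iff the only symmetric matrices $M'$ satisfying the linear conditions imposed by the (minimal) zeros $u$ of $x^TMx$ --- conditions of the type $u^TM'u=0$ together with $(M'u)_i=0$ for the indices $i$ where $(Mu)_i=0$ --- are the multiples of $M$. Formulating and justifying this criterion, and controlling what happens when the zero set is infinite (as it is for the Horn matrix and its scalings), is a substantial step you do not carry out. Second, the entire combinatorial enumeration of admissible support configurations --- ruling out mixtures of $2$- and $3$-element supports, degenerate cycles, configurations with $4$-element supports, etc. --- is exactly the body of Hildebrand's proof (building on Baumert-type analysis of $5\times 5$ copositive matrices), and you explicitly acknowledge not doing it. Third, even for the two surviving patterns you do not verify copositivity and extremality of the resulting matrices, nor derive the conditions $\psi_i>0$, $\sum_{i=1}^5\psi_i<\pi$ in (\ref{psi}); these verifications are nontrivial computations. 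So the proposal is a reasonable road map, consistent in spirit with how the classification is actually established, but it is an outline of a proof rather than a proof: the decisive enumeration and the extremality criterion it relies on are missing.
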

As a direct consequence, in order to show equality $\COP_5=\bigcup_{r\geq0}\MK_n^{(r)}$,
it suffices  to show that every positive diagonal scaling of the matrices  $T(\psi)$ ($\psi\in\Psi$) and $H$ 
lies in some cone $\MK^{(r)}_n$.
It turns out that a different proof strategy is needed for the class of matrices $T(\psi)$ and for the Horn matrix $H$. The main reason lies in the fact that the form $x^TMx$ has finitely many zeros in the simplex when $M=T(\psi)$, but infinitely many zeros when $M=H$. We will next discuss these two cases separately.

\subsubsection*{Proof strategy for the matrices $T(\psi)$}

Here we show that any positive diagonal scaling of a matrix $T(\psi)$
(with $\psi\in \Psi$) belongs to some cone $\MK^{(r)}_5$.
We, in fact, show a stronger result, namely membership in some cone $\LAS^{(r)}_{\Delta_n}$.
For this,
the strategy is  to apply the result of Theorem \ref{opt-DMD} to the matrix $T(\psi)$.
So we need to verify that the required conditions on the zeros of $x^T T(\psi)x$ are satisfied. 
First, we recall a characterization of the (finitely many) zeros of $x^T T(\psi)x$, which follows from results in \cite{Hildebrand}.
\begin{lemma}[\cite{Hildebrand}]\label{minimizers}
For any $\psi\in \Psi$, the zeros of the quadratic form $x^T T(\psi) x$ in the simplex $\Delta_5$ 
are the vectors $v_i=\frac{u_i}{\|u_i\|_1}$ for $i\in [5]$, where the $u_i$'s are defined by
\begin{equation*}
\resizebox{0.9\hsize}{!}{$ u_1=\begin{pmatrix}  \sin \psi_5 \\
\sin(\psi_4+\psi_5)\\
\sin \psi_4\\
0\\
0
\end{pmatrix}, \
u_2= 
\begin{pmatrix}
\sin(\psi_3+\psi_4)\\
\sin \psi_3\\
0\\
0\\
\sin \psi_4
\end{pmatrix}, \ 
u_3= \begin{pmatrix}
0\\
\sin \psi_1\\
\sin(\psi_1+\psi_5)\\
\sin\psi_5\\
0
\end{pmatrix},\
u_4=\begin{pmatrix}
0\\
0\\
\sin\psi_2\\
\sin(\psi_1+\psi_2)\\
\sin\psi_1
\end{pmatrix},\
u_5=\begin{pmatrix}
\sin\psi_2\\
0\\
0\\
\sin\psi_3\\
\sin(\psi_2+\psi_3)
\end{pmatrix}$.}
\end{equation*}
\end{lemma}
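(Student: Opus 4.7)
The plan splits into two steps: verify each $v_i$ is a zero of $x^T T(\psi) x$ in $\Delta_5$, then show no other zeros exist.

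For the first step, by homogeneity it suffices to show $(T(\psi) u_i)_k = 0$ for $k \in \supp(u_i)$. Each such coordinate is a sum of three sine--cosine products that telescopes via $\sin A \cos B - \cos A \sin B = \sin(A-B)$; for instance,
\begin{align*}
(T(\psi) u_1)_1 = \sin\psi_5 - \cos\psi_4\sin(\psi_4+\psi_5)+\cos(\psi_4+\psi_5)\sin\psi_4 = \sin\psi_5 - \sin\psi_5 = 0,
\end{align*}
and the other two on-support coordinates reduce analogously; the cyclic symmetry $\psi \mapsto (\psi_2,\psi_3,\psi_4,\psi_5,\psi_1)$ of $T(\psi)$ handles $u_2, \ldots, u_5$. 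A parallel computation, combining $2\cos A\sin B = \sin(A+B) - \sin(A-B)$ with $\cos A+\cos B = 2\cos(\tfrac{A+B}{2})\cos(\tfrac{A-B}{2})$, gives
\begin{align*}
(T(\psi)u_1)_4 = 2\sin\psi_5\,\cos\!\Big(\tfrac{\sum_k\psi_k}{2}\Big)\,\cos\!\Big(\tfrac{\psi_2+\psi_3-\psi_1-\psi_4-\psi_5}{2}\Big),
\end{align*}
which is strictly positive since $\psi\in\Psi$ keeps both half-angles in $(-\pi/2, \pi/2)$; analogous identities show $(T(\psi)u_i)_k > 0$ for every $k \notin \supp(u_i)$. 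Since $\psi_k, \psi_k+\psi_{k+1}\in(0,\pi)$, the on-support entries of $u_i$ are strictly positive, so $v_i\in\Delta_5$.

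For the second step, let $x\in\Delta_5$ with $x^T T(\psi) x = 0$ and set $S = \supp(x)$. As $T(\psi)$ is copositive and $x$ attains the minimum $0$ of $y^T T(\psi) y$ over $\Delta_5$, the KKT conditions force the equality-constraint Lagrange multiplier to $0$, giving $(T(\psi)x)_i=0$ for $i\in S$ and $(T(\psi)x)_i\ge 0$ for $i\notin S$; hence $\tilde x := x|_S$ is a strictly positive kernel vector of $T(\psi)[S]$, and by the standard Diananda-type argument $T(\psi)[S] \succeq 0$. The cases $|S|\le 2$ are immediate (diagonal is $1$, and each $2\times 2$ minor has determinant $1-\cos^2\theta>0$). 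For $|S|=3$, one computes $\det T(\psi)[S]$ across the ten triples. The five cyclic triples $\{k,k+1,k+2\}\bmod 5$ agree with $\supp(u_i)$ and yield a determinant of the form $1 - \cos^2 a - \cos^2 b - \cos^2 c + 2\cos a\cos b\cos c$ with $b=a+c$, which simplifies to $0$ via $\cos(a+c)\cos(a-c) = \cos^2 a + \cos^2 c - 1$; a positive $2\times 2$ minor forces rank $2$, so the kernel is one-dimensional, spanned by $u_i|_S$, whence $x = v_i$. The five non-cyclic triples produce the modified determinant $1 - \cos^2 a - \cos^2 b - \cos^2 c - 2\cos a\cos b\cos c$ with $a+b+c = \sum_k\psi_k$, which coincides with the Gram determinant of unit vectors at pairwise angles $(\pi-a, b, c)$; under $\sum_k \psi_k < \pi$ the triangle inequality $\pi - a \leq b + c$ fails strictly (being equivalent to $a+b+c\ge\pi$), forcing this determinant to be strictly negative, contradicting $T(\psi)[S]\succeq 0$.

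The cleanest part is $|S|\in\{4,5\}$. Any such $S$ contains a cyclic triple $T=\supp(u_i)$, and restricting the identity $T(\psi)[S]\tilde x = 0$ to rows indexed by $T$ yields
\begin{align*}
T(\psi)[T]\,\tilde x|_T = -T(\psi)[T,S\setminus T]\,\tilde x|_{S\setminus T}.
\end{align*}
The left-hand side lies in the image of the symmetric matrix $T(\psi)[T]$, which equals $(\ker T(\psi)[T])^\perp = \mathrm{span}(u_i|_T)^\perp$. Taking inner product with $u_i|_T$ on both sides, and using symmetry of $T(\psi)$ to identify the resulting row vector with $((T(\psi)u_i)_j)_{j\in S\setminus T}$, we obtain $0 = \sum_{j\in S\setminus T} (T(\psi)u_i)_j\,\tilde x_j$. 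But $(T(\psi)u_i)_j > 0$ for $j\notin T$ by the first step, and $\tilde x_j > 0$ for $j\in S\setminus T$, so the sum is strictly positive --- a contradiction. The main obstacle I anticipate is the bookkeeping concentrated in the first step: verifying that $(T(\psi)u_i)_k > 0$ for every $i\in [5]$ and $k\notin\supp(u_i)$ requires repeated application of product-to-sum and sum-to-product identities while tracking signs against the constraint $\sum_k\psi_k<\pi$, though cyclic symmetry reduces the essential work to a single representative case.
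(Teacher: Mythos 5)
Your proposal is correct, and it is worth noting that the chapter itself gives no proof of this lemma: it is simply recalled as a consequence of Hildebrand's analysis in \cite{Hildebrand}, where the zeros arise from his classification of the extreme rays of $\COP_5$. Your argument is therefore a genuinely different, self-contained route. The computations check out: the on-support identities $(T(\psi)u_i)_k=0$ follow from the addition formulas exactly as you write, your product-to-sum evaluation of $(T(\psi)u_1)_4$ (and analogously $(T(\psi)u_1)_5=2\sin\psi_4\cos(\tfrac{\Sigma}{2})\cos(\tfrac{\psi_3+\psi_4+\psi_5-\psi_1-\psi_2}{2})$, with $\Sigma=\sum_k\psi_k$) is correct and strictly positive on $\Psi$, and the cyclic invariance $T_{i,i+1}=-\cos\psi_{i+3}$, $T_{i,i+2}=\cos(\psi_{i+3}+\psi_{i+4})$ legitimately reduces the bookkeeping to one representative. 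For the converse direction, your use of the standard facts that $(T(\psi)x)_i=0$ on $\supp(x)$ and $T(\psi)[S]\succeq 0$ (Diananda-type, cf.\ \cite{Diananda}) is fine, \emph{provided} you make explicit that copositivity of $T(\psi)$ for $\psi\in\Psi$ is being imported from Theorem \ref{extreme-rays} (or proved separately); the determinant computations for the ten triples are right, including the factorization $1-\cos^2a-\cos^2b-\cos^2c+2\cos a\cos b\cos c=-(\cos b-\cos(a+c))(\cos b-\cos(a-c))$ which gives zero for cyclic triples ($b=a+c$) and strict negativity for the non-cyclic ones under $\Sigma<\pi$ (both factors negative since $\pi-a>b+c\ge|b-c|$), and your inner-product argument $0=\sum_{j\in S\setminus T}(T(\psi)u_i)_j\,\tilde x_j>0$ disposes of supports of size $4$ and $5$ cleanly. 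What the two approaches buy: citing \cite{Hildebrand} keeps the chapter short and places the lemma in the context of the full extreme-ray classification, whereas your proof makes Lemma \ref{minimizers} verifiable within the chapter using only elementary trigonometry and basic copositivity facts, at the cost of the sign bookkeeping you already flagged.
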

Then, it is  straightforward to check that the conditions in Theorem \ref{opt-DMD} are satisfied and so we obtain the following result for the extreme matrices of type (iii) in Theorem~\ref{extreme-rays}.

\begin{theorem}[Laurent, Vargas \cite{LV-COP_5}]\label{Tpsi}
We have $DT(\psi)D\in \bigcup_{r\geq0}\LAS_{\Delta_n}^{(r)}$ for all $D\in \MD^5_{++}$ and $\psi\in \Psi$.
\end{theorem}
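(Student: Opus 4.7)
My plan is to apply Theorem \ref{opt-DMD} to the matrix $M=T(\psi)$, since the conclusion of that theorem is exactly the statement to be proved. Three hypotheses need to be verified: $T(\psi)\in\bCOP_5$, the quadratic form $x^TT(\psi)x$ has finitely many zeros on $\Delta_5$, and at every such zero $u$ we have $(T(\psi)u)_i>0$ for all $i\in[5]\setminus\supp(u)$. The first is immediate from Theorem \ref{extreme-rays}, since $T(\psi)$ spans an extreme ray of $\COP_5$ and hence lies on its boundary. The second is provided by Lemma \ref{minimizers}, which lists exactly five zeros $v_i=u_i/\|u_i\|_1$, each with support of size three.

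The crux of the argument is the strict positivity check in the third condition. A key simplification is that the matrix $T(\psi)$, the list $u_1,\ldots,u_5$ from Lemma \ref{minimizers}, and the parameter set $\Psi$ are all invariant under the simultaneous cyclic shift $i\mapsto i+1\ (\mathrm{mod}\ 5)$ of coordinates and $\psi_i\mapsto\psi_{i+1}\ (\mathrm{mod}\ 5)$ of parameters. This reduces the verification to a single minimizer, say $v_1$, whose support is $\{1,2,3\}$, so one only needs to check $(T(\psi)u_1)_4>0$ and $(T(\psi)u_1)_5>0$ for every $\psi\in\Psi$.

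I would carry out this check by direct computation. Writing out the two entries from the explicit formulas for $T(\psi)$ and $u_1$, and applying a product-to-sum identity of the form $\cos(\alpha+\beta)\sin(\alpha+\gamma)-\cos\beta\sin\gamma=\cos(\alpha+\beta+\gamma)\sin\alpha$ to cancel the cross-terms, each expression collapses to
\begin{align*}
(T(\psi)u_1)_4 &= \sin\psi_5\bigl[\cos(\psi_2+\psi_3)+\cos(\psi_1+\psi_4+\psi_5)\bigr],\\
(T(\psi)u_1)_5 &= \sin\psi_4\bigl[\cos(\psi_1+\psi_2)+\cos(\psi_3+\psi_4+\psi_5)\bigr].
\end{align*}
The sine factors are positive because each $\psi_i\in(0,\pi)$. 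For the bracketed sums I would invoke $\cos A+\cos B=2\cos(\tfrac{A+B}{2})\cos(\tfrac{A-B}{2})$: in both cases $A+B=\sum_{i=1}^5\psi_i\in(0,\pi)$ by definition of $\Psi$, and each of $A,B$ lies in $(0,\pi)$, so the half-sum and half-difference both belong to $(-\pi/2,\pi/2)$, making both cosines positive. This gives the required strict positivity, and Theorem \ref{opt-DMD} then yields $DT(\psi)D\in\bigcup_{r\geq 0}\LAS_{\Delta_5}^{(r)}$ for every $D\in\MD^5_{++}$.

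The main obstacle is not conceptual but clerical: one has to execute the trigonometric simplification correctly and justify the cyclic-symmetry reduction rigorously. Without exploiting the symmetry the same simplification must be done five times, which is tedious but straightforward.
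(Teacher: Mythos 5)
Your proposal follows exactly the paper's route: apply Theorem \ref{opt-DMD} to $T(\psi)$, using Lemma \ref{minimizers} for the finitely many zeros and then checking the strict positivity condition $(T(\psi)u)_i>0$ off the support. Your explicit trigonometric verification (which I checked: both product-to-sum collapses and the sum-to-product positivity argument are correct, and the cyclic-shift reduction is legitimate since $P^TT(\psi)P=T(\sigma\psi)$ with $\Psi$ shift-invariant) simply carries out in detail what the paper dismisses as ``straightforward to check''.
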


\subsubsection*{Proof strategy for the Horn matrix $H$}

As already mentioned, the above strategy cannot be applied to the positive diagonal scalings of $H$ (extreme matrices of type (ii)  in Theorem \ref{extreme-rays}), because the form $x^THx$ has infinitely many zeros in $\Delta_5$; e.g.,  any $x=(\frac{1}{2}, 0, \frac{t}{2}, \frac{1-t}{2}, 0)$ with $t\in [0,1]$ is a zero. In fact, as mentioned earlier, the Horn matrix $H$ does not belong to any of the cones $\LAS_{\Delta_n}^{(r)}$ (see \cite{LV-COP_5}).
 Then, another strategy should be applied  for showing that all its positive diagonal scalings belong to some cone $\MK^{(r)}_5$.

\medskip
The starting point is to use  the fact that $\bigcup_r \MK^{(r)}_n=\bigcup _r \LAS^{(r)}_{\mathbb S^{n-1}}$ (recall Theorem~\ref{theo-link}) and  to change variables. This enables us to rephrase the question of whether all positive diagonal scalings of $H$ belong to $ \bigcup_r\MK^{(r)}_5$ as the question of deciding whether, for all positive scalars $d_1,\ldots,d_5$,  the form $(x^{\circ 2})^THx^{\circ 2}$ can be written as a sum of squares modulo the ideal generated by $\sum_{i=1}^5d_i x_i^2 -1$. This latter question was recently answered in the affirmative by Schweighofer and Vargas \cite{SV}.

\begin{theorem}[Schweighofer, Vargas \cite{SV}]\label{DHD-theorem}
Let $d_1, d_2, \dots, d_5>0$ be positive real numbers. Then we have
$$(x^{\circ 2})^T Hx^{\circ 2} = \sigma +q\Big(1-\sum_{i=1}^5 d_ix_i^2\Big) \ \text{ for some } \sigma\in \Sigma \text{ \rm and } q\in \oR[x].$$
Therefore, $DHD\in \bigcup_r \MK^{(r)}_5$ for all $D\in\MD^5_{++}$.
\end{theorem}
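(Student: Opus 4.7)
The plan is to first reduce the statement to a claim about all positive diagonal scalings of $H$ belonging to $\bigcup_r \MK^{(r)}_5$, then analyze the zero set, and finally confront the main obstacle posed by infinitely many zeros.

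First, I would apply the substitution $y_i = \sqrt{d_i}\,x_i$, so that $\sum_i d_i x_i^2 = \sum_i y_i^2$ and $x_i^2 = y_i^2/d_i$. Setting $D=\Diag(\sqrt{d_1},\ldots,\sqrt{d_5})$, the claimed identity transforms into
\[
(y^{\circ 2})^T (D^{-1}HD^{-1})\, y^{\circ 2} \;=\; \tilde\sigma + \tilde q\Big(1-\sum_{i=1}^5 y_i^2\Big),
\]
with $\tilde\sigma\in\Sigma$ and $\tilde q\in\oR[y]$. By Theorem~\ref{dKPL-prop}, this is equivalent to $D^{-1}HD^{-1}\in \bigcup_r\MK^{(r)}_5$. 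As $d$ varies, $D^{-1}HD^{-1}$ ranges over all positive diagonal scalings of $H$, so the two assertions of the theorem collapse into the single statement that every positive diagonal scaling of $H$ belongs to some cone $\MK^{(r)}_5$.

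Second, I would examine the zero set. A short computation using $H$'s cyclic symmetry shows that $x^THx$ vanishes on $\Delta_5$ along a union of five line segments, one for each cyclic shift (e.g., $\{(\tfrac12,0,\tfrac t2,\tfrac{1-t}{2},0):t\in[0,1]\}$). Lifting through $x\mapsto x^{\circ 2}$, the form $(x^{\circ 2})^T H x^{\circ 2}$ therefore has a positive-dimensional zero set on every ellipsoid $\{\sum d_i x_i^2=1\}$. This is exactly why the Nie-type strategy behind Theorem~\ref{opt-DMD} cannot be invoked: the associated standard quadratic program has infinitely many minimizers, and indeed, as noted in Theorem~\ref{theoexactLAS} and its follow-up example, $H\not\in\bigcup_r\LAS^{(r)}_{\Delta_5}$. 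Moving to the sphere/ellipsoid (i.e., using the cones $\MK^{(r)}_5$ rather than $\LAS^{(r)}_{\Delta_5}$) is essential because it enlarges the admissible denominators, but the infinite zero set persists.

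The approach I would take is to build, for each $d$, an explicit sum-of-squares identity extending Parrilo's decomposition~(\ref{Horn-k1}) to the ellipsoid setting: concretely, to find an exponent $r=r(d)$ and an SOS decomposition
\[
\Big(\sum_{i=1}^5 d_i x_i^2\Big)^r (x^{\circ 2})^T H x^{\circ 2} \in \Sigma,
\]
in which the squares are organized in five cyclic orbits and the weights $d_i$ appear as explicit parameters designed so that all residual cross-terms cancel modulo the ideal $\langle 1-\sum_i d_i x_i^2\rangle$. A continuity/deformation argument from the symmetric case $d=(1,\ldots,1)$ (where Parrilo's identity gives $r=1$) seems natural, together with an invocation of the cyclic $\mathbb Z/5\mathbb Z$-symmetry to reduce the number of unknown square summands.

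The main obstacle is precisely the infinite, one-dimensional zero set on the ellipsoid. Any SOS representative $\sigma=\sum_k q_k^2$ must satisfy $q_k\equiv 0$ along the entire lifted zero variety, which places stringent ideal-theoretic constraints on the allowable $q_k$ and forces $r=r(d)$ to grow (note $r$ cannot be bounded uniformly in $d$, by the argument in the proof of Theorem~\ref{theoDDGH}). The hard part is therefore controlling, for \emph{every} choice of positive $d$, the interplay between the vanishing ideal of this lifted zero set and the ellipsoid ideal $\langle 1-\sum d_i x_i^2\rangle$, in a way that still yields an SOS decomposition of bounded (but $d$-dependent) degree. Overcoming it likely requires either exploiting the cyclic symmetry to reduce the problem to a tractable parametric system, or appealing to a refined boundary Positivstellensatz tailored to forms whose zeros on the ambient variety have such a structured, positive-dimensional shape.
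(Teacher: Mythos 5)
Your reduction is sound: the substitution $y_i=\sqrt{d_i}\,x_i$ together with Theorem \ref{dKPL-prop} does show that the stated identity is equivalent to every positive diagonal scaling of $H$ lying in some cone $\MK^{(r)}_5$, i.e., to finding, for each $d$, an exponent $r(d)$ with $\big(\sum_{i=1}^5 d_ix_i^2\big)^{r(d)}(x^{\circ 2})^THx^{\circ 2}\in\Sigma$ (modulo a small bookkeeping slip: with $y_i=\sqrt{d_i}x_i$ the matrix that appears is $D^{-1}HD^{-1}$ for $D=\Diag(d_1,\ldots,d_5)$, not for $\Diag(\sqrt{d_1},\ldots,\sqrt{d_5})$ --- harmless, since $d$ ranges over all positive tuples). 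Your observations that the zero set of $x^THx$ on $\Delta_5$ is one-dimensional, that this blocks the Nie-type route of Theorem \ref{opt-DMD} (indeed $H\notin\bigcup_r\LAS^{(r)}_{\Delta_5}$), and that $r(d)$ cannot be bounded uniformly in $d$ by the scaling argument in the proof of Theorem \ref{theoDDGH}, are all correct; this part essentially reproduces the framing the chapter itself gives just before the theorem.

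However, beyond this (easy) equivalence nothing is proved: the entire content of the theorem is the existence of such an $r(d)$ for every $d$, and your proposal leaves exactly that as an unexecuted plan (``find an explicit parametric SOS identity extending (\ref{Horn-k1})'', ``a continuity/deformation argument seems natural'', ``overcoming it likely requires\ldots''). The deformation idea is precisely what the obstacle you name rules out in any naive form: because the quartic form has a positive-dimensional zero set on each ellipsoid, an SOS certificate cannot be perturbed freely, and since $r(d)$ must blow up along degenerating scalings there is no single bounded-degree parametric family to deform within; you supply no mechanism that actually produces the certificates. The proof this chapter points to (in \cite{SV}) proceeds quite differently: it works with the representation $\sigma+q(1-\sum_i d_ix_i^2)$ directly and uses the theory of pure states in real algebraic geometry \cite{BSS}, combined with the characterization from \cite{LV2021b} of exactly which diagonal scalings $DHD$ belong to $\MK^{(1)}_5$; the pure-state machinery is what copes with the infinitely many zeros, and it is this ingredient (or a substitute for it) that is missing from your argument.
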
 
The proof of this theorem uses the theory of pure states in real algebraic geometry (as described in \cite{BSS}), combined with a characterization of the diagonal scalings of the Horn matrix that belong to the cone $\MK_n^{(1)}$ (given in \cite{LV2021b}). The technical details go beyond the scope of this chapter, so we refer to \cite{SV} for details.

\ignore{
\begin{warning}{Horn Matrix $H$}
$H\notin \LAS_{\Delta_n}^{(r)}$ for any $r\geq 0$. The proof relies in exploiting the structure of the (infinitely many) zeros of $x^THx$ over $\Delta_n$.
\end{warning}
This example shows that another strategy should be applied
\\
We finish the section by showing that the cones $\LAS_{\Delta_n}^{(r)}$ approximate exactly $\COP_n$ if and only if $n\leq 3$.
}

\section{The stability number of a graph $\alpha(G)$}\label{section-alpha}

In this section, we investigate a class of copositive matrices that arise naturally from graphs.
Consider a graph $G=(V=[n],E)$, where $V=[n]$ is the set of vertices and $E$ is the set of edges, consisting of the pairs of distinct vertices that are adjacent in $G$. A set $S\subseteq V$ is called {\em stable} (or {\em independent}) if it does not contain any edge of $G$. Then,  the {\em stability number} of $G$, denoted by $\alpha(G)$, is defined as the maximum cardinality of a stable set in $G$. Computing $\alpha(G)$ is a well-known NP-hard problem (see \cite{Karp}), with many applications, e.g., in operations research, social networks analysis, and chemistry. There is a vast literature on this problem, dealing among other things with how to define linear and/or semidefinite approximations for $\alpha (G)$ (see, e.g., \cite{ dKP2002,Laurent2003, ZVP2006} and further references therein). 

\begin{example}{Lasserre hierarchy for $\alpha(G)$ via polynomial optimization on the binary cube}
The stability number of $G=([n],E)$ can be formulated as a polynomial optimization problem  on the binary cube $\{0,1\}^{n}$:
\begin{align}\label{alpha-binary}
\alpha(G)=\max\Big\{\sum_{i\in V}x_i:\  x_ix_j=0 \text{ for } \{i,j\}\in E,\ x_i^2-x_i=0 \text{ for } i\in V\Big\}.
\end{align}
We can consider the Lasserre hierarchy (\ref{lasserre-hierarchy}) for problem (\ref{alpha-binary}) and  obtain the following bounds 
\begin{align}\label{las-alpha}
\las^{(r)}(G):= \min \Big\{\lambda: \ & \lambda - \sum_{i\in V}x_i = \sigma + \sum_{\{i,j\}\in E}p_{ij}x_ix_j + \sum_{i\in V}q_i(x_i^2-x_i)\\
&\text{ for some  } \sigma\in \Sigma_{2r} \text{ and }   p_{ij}, q_i\in \oR[x]_{2r-2} \Big\}.
\end{align}
Clearly, we have $\alpha(G)\le \las^{(r)}(G)$. Moreover, the bound is exact at order $r=\alpha(G)$, that is,
$\alpha(G)=\las^{(\alpha(G))}(G)$ (see \cite{Laurent2003}). The proof is not difficult and exploits the fact that in the definition of these parameters one works modulo the ideal generated by the polynomials $x_i^2-x_i$ ($i\in V$) and the edge monomials $x_ix_j$ ($\{i,j\}\in E$).
At order $r=1$, the bound $\las^{(1)}(G)$ coincides with the parameter $\vartheta(G)$ introduced in 1979 by Lov\'asz in his seminal paper \cite{Lo79}.
\end{example}

In this section we focus on the hierarchies of approximations that naturally arise when considering the following copositive reformulation for $\alpha(G)$, given  by de Klerk and Pasechnik \cite{dKP2002}:
\begin{equation}\label{alpha-cop}
\alpha(G)=\min \{t:\  t(A_G+I)-J \in \COP_n\}.
\end{equation}
Here,  $A_G, I$, and $J$ are, respectively, the adjacency matrix of $G$ (whose entries are all 0 except 1 at the positions corresponding to the edges of $G$), the identity, and the all-ones matrix. 
As a consequence, it follows from (\ref{alpha-cop}) that the following {\em graph matrix}
\begin{equation}\label{matG}
M_G:=\alpha(G)(I+A_G)-J
\end{equation}
belongs to $\COP_n$. The copositive reformulation (\ref{alpha-cop}) for $\alpha(G)$ can be seen as an application of the following quadratic formulation by Motzkin and Straus \cite{motzkin}:
$$
{1\over \alpha(G)}=\min\{x^T(I+A_G)x: x\in \Delta_n\}.
$$

\begin{example}{
The Horn matrix coincides with the graph matrix of the graph $C_5$.}
When $G=C_5$ is the 5-cycle, its adjacency matrix $A_G$ is given by 
\begin{align*}\label{A_C_5}
A_{C_5}= \left(\begin{matrix} 0 & 1 & 0 & 0 & 1\cr
1 & 0 & 1 & 0 & 0\cr
0 & 1 & 0 & 1 & 0\cr
0 & 0 & 1 & 0 & 1 \cr
1 & 0 & 0 & 1 & 0
 \end{matrix}\right).
\end{align*}
As $\alpha(C_5)=2$, it follows that  the graph matrix $M_{C_5}=2(I+A_{C_5})-J$ of $C_5$ coincides with the Horn matrix $H$.
\end{example}
Based on the formulation (\ref{alpha-cop}), de Klerk and Pasechnik \cite{dKP2002} proposed two hierarchies $\zeta^{(r)}(G)$ and $\vartheta^{(r)}(G)$ of upper bounds for $\alpha(G)$,  that are obtained by replacing  in (\ref{alpha-cop}) the cone $\COP_n$ by its subcones $\MC_n^{(r)}$ and $\MK_n^{(r)}$,  respectively. In this section, we present several known results about these two hierarchies and related results for the graph matrices $M_G$.
One of the central questions  is whether the hierarchy $\vartheta^{(r)}(G)$ converges to $\alpha(G)$ in finitely many steps or, equivalently, whether the matrix $M_G$ belongs to $\bigcup_r\MK^{(r)}_n$, and what can be said about the minimum number of steps where finite convergence takes place.

\subsection{The hierarchy $\zeta^{(r)}(G)$}\label{sec-alpha-C}

As mentioned above, for an integer $r\ge 0$, the parameter $\zeta^{(r)}(G)$ is defined as
\begin{equation}
\zeta^{(r)}(G):=\min \{t:  \  t(A_G+I)-J \in \MC_n^{(r)}\}.
\end{equation}
Since $\text{int}(\COP_n)\subseteq  \bigcup_{r\geq 0}\MC_n^{(r)}$, it follows directly that the parameters  $\zeta^{(r)}(G)$ converge asymptotically  to $\alpha(G)$ as $r\to \infty$. Note that, if $G=K_n$ is a complete graph, then $\alpha(G)=1$ and the matrix $I+A_G-J$ is the zero matrix, thus belonging trivially to the cone $\MC^{(0)}_n$, so that $1=\alpha(K_n)=\zeta^{(0)}(K_n)$.  However, finite convergence does not hold if $G$ is not a complete graph. 
\begin{theorem}[de Klerk, Pasechnik \cite{dKP2002}]\label{theozeta}
Assume  $G$ is not a complete graph. Then, we have $\zeta^{(r)}(G)>\alpha(G)$ for all $r\in \mathbb{N}$.
\end{theorem}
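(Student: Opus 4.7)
The plan is to prove the stronger statement that $M_G:=\alpha(G)(I+A_G)-J$ itself does not belong to $\MC_n^{(r)}$ for any $r\in \oN$. Since $\MC_n^{(r)}$ is closed (it is defined by sign conditions on finitely many coefficients) and $\MC_n^{(r)}\subseteq \COP_n$, the infimum defining $\zeta^{(r)}(G)$ is attained whenever finite, and always satisfies $\zeta^{(r)}(G)\ge \alpha(G)$ by (\ref{alpha-cop}). Therefore $M_G\notin\MC_n^{(r)}$ will force $\zeta^{(r)}(G)>\alpha(G)$.

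Assume for contradiction that $M_G\in \MC_n^{(r)}$, so the polynomial
\[
p(x)\ :=\ \Big(\sum_{k=1}^n x_k\Big)^{r} x^TM_G x\ =\ \sum_{|\alpha|=r+2} c_\alpha\, x^\alpha
\]
has all coefficients $c_\alpha\ge 0$. Since $G$ is not complete, there is at least one non-edge, so $\alpha(G)\ge 2$. Fix a maximum stable set $S\subseteq V$ with $|S|=\alpha(G)$. Using that $S$ contains no edge, the Motzkin--Straus-type identity gives $\chi_S^T(I+A_G)\chi_S=|S|$ and $\chi_S^TJ\chi_S=|S|^2$, so
\[
\chi_S^TM_G\chi_S=\alpha(G)\cdot |S|-|S|^2=0,\qquad\text{hence } p(\chi_S)=|S|^r\cdot 0=0.
\]

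Next I evaluate $p(\chi_S)$ monomial by monomial. Since $(\chi_S)^\alpha\in\{0,1\}$, equal to $1$ exactly when $\supp(\alpha)\subseteq S$, the assumption $c_\alpha\ge 0$ combined with $p(\chi_S)=0$ forces $c_\alpha=0$ for every $\alpha$ with $\supp(\alpha)\subseteq S$. The contradiction will come from exhibiting a single such $\alpha$ whose coefficient is positive. Pick any $i\in S$ and consider $\alpha=(r+2)e_i$; the monomial $x_i^{r+2}$ in $p$ can be obtained only by taking $x_i^r$ from $(\sum_k x_k)^r$ and $x_i^2$ from $x^TM_Gx$, so
\[
c_{(r+2)e_i}\ =\ M_{G,ii}\ =\ \alpha(G)-1\ \ge\ 1.
\]
Since $\supp((r+2)e_i)=\{i\}\subseteq S$, this contradicts $c_\alpha=0$ on that support, completing the proof.

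There is no real obstacle: the entire argument is a short coefficient comparison using the zero of $x^TM_Gx$ at $\chi_S$. The only points deserving a line of justification are the closedness of $\MC_n^{(r)}$ (to ensure strict inequality rather than just non-attainment) and the fact that $G$ non-complete implies $\alpha(G)\ge 2$ (so that the diagonal entry $M_{G,ii}=\alpha(G)-1$ is strictly positive, which is exactly where the hypothesis ``$G$ not complete'' is used).
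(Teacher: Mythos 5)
Your proof is correct, and it takes a different route from the chapter, which in fact does not prove Theorem \ref{theozeta} at all: there the statement is quoted from \cite{dKP2002}, and it also follows from the closed-form expression of Pe\~na, Vera and Zuluaga stated right after it (Theorem \ref{theorem-zeta}), since with $r+2=u\alpha(G)+v$ one computes $\binom{r+2}{2}-\alpha(G)\bigl(\binom{u}{2}\alpha(G)+uv\bigr)=\frac{1}{2}\bigl(u\alpha(G)(\alpha(G)-1)+v(v-1)\bigr)$, which is strictly positive whenever $\alpha(G)\ge 2$ and the denominator $\binom{u}{2}\alpha(G)+uv$ is nonzero (and $\zeta^{(r)}(G)=\infty$ otherwise). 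Your argument instead reduces the claim to the equivalent statement $M_G\notin\MC^{(r)}_n$ (using closedness of $\MC^{(r)}_n$ and $\MC^{(r)}_n\subseteq\COP_n$ to get attainment of the minimum at $t=\alpha(G)$ if equality held), and then runs a zero-versus-nonnegative-coefficients argument: evaluating $(\sum_k x_k)^r\, x^TM_Gx$ at $\chi^S$ for a maximum stable set $S$ forces $c_\alpha=0$ for every exponent $\alpha$ with $\supp(\alpha)\subseteq S$, while the coefficient of $x_i^{r+2}$ for $i\in S$ equals $(M_G)_{ii}=\alpha(G)-1\ge 1$ because $G$ is not complete. All steps check out, including the attainment argument and the identification $c_{(r+2)e_i}=(M_G)_{ii}$ (no other product of monomials can produce $x_i^{r+2}$, so there is no cancellation issue). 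This is in the spirit of the original argument in \cite{dKP2002} and of the way zeros are exploited in Lemma \ref{kernel-k0}; indeed it proves the more general fact that any copositive matrix whose quadratic form has a nonzero nonnegative zero whose support contains an index with strictly positive diagonal entry lies outside every cone $\MC^{(r)}_n$. What the chapter's route via Theorem \ref{theorem-zeta} buys instead is quantitative information: the exact value of $\zeta^{(r)}(G)$ and hence Corollary \ref{corollary-zeta} on when rounding recovers $\alpha(G)$, which your coefficient argument does not provide.
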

By the definition of the cone $\MC^{(r)}_n$, the parameter $\zeta^{(r)}(G)$ can be formulated as a linear program, asking for the smallest scalar $t$ for which all the coefficients of the polynomial 
$(\sum_{i=1}^nx_i)^r \ x^T(t(I+A_G)-J)x$ are nonnegative.
The parameter $\zeta^{(r)}(G)$ is very well understood. Indeed, Pe\~{n}a, Vera and Zuluaga \cite{PVZ2007} give a closed-form expression for it in terms of $\alpha(G)$. 
\begin{theorem}[Pe\~{n}a, Vera, Zuluaga \cite{PVZ2007}]\label{theorem-zeta}
Write $r+2=u\alpha(G)+v$, where $u, v$ are nonnegative integers such that  $v\le \alpha(G)-1$. Then we have
$$ \zeta^{(r)}(G)=\frac{\binom{r+2}{2}}{\binom{u}{2}\alpha(G)+uv},$$
where we set $\zeta^{(r)}(G)=\infty$ if $r\le \alpha(G)-2$ (since then the denominator in the above formula is equal to 0).
\end{theorem}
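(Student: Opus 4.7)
The plan is to decipher membership $t(I+A_G)-J\in\MC^{(r)}_n$ as a finite list of linear inequalities in $t$ (one per monomial of degree $r+2$) and then optimize. A direct multinomial expansion gives the coefficient of $x^\beta$ (with $\beta\in\oN^n$, $|\beta|=r+2$) in $(\sum_k x_k)^r\, x^T(t(I+A_G)-J)x$ as
\[
c_\beta(t) \;=\; \frac{r!}{\prod_k\beta_k!}\Big[\,t\bigl(\beta^T(I+A_G)\beta - (r+2)\bigr) \,-\, (r+2)(r+1)\,\Big].
\]
The bracketed quantity $\beta^T(I+A_G)\beta-(r+2)=\sum_i\beta_i(\beta_i-1)+2\sum_{\{i,j\}\in E}\beta_i\beta_j$ is always $\ge 0$ for $\beta\in\oN^n$ and vanishes iff $\beta$ is the 0/1-indicator of a stable set of size $r+2$. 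Hence the system $\{c_\beta(t)\ge 0\}_\beta$ is infeasible exactly when $\alpha(G)\ge r+2$, i.e.\ $r\le\alpha(G)-2$, matching the $\infty$ case; otherwise
\[
\zeta^{(r)}(G)\;=\;\frac{(r+2)(r+1)}{\mu^*},\qquad \mu^*\,:=\,\min\bigl\{\beta^T(I+A_G)\beta-(r+2)\,:\,\beta\in\oN^n,\ |\beta|=r+2\bigr\}.
\]

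Next I would evaluate $\mu^*$ by a discrete Motzkin--Straus reduction. The crucial algebraic fact is that, whenever $\{i,j\}\in E$, one has $(e_i-e_j)^T(I+A_G)(e_i-e_j)=1-2+1=0$, so $\tau\mapsto(\beta+\tau(e_i-e_j))^T(I+A_G)(\beta+\tau(e_i-e_j))$ is \emph{linear} in $\tau$. Consequently, if a minimizer $\beta$ has $\beta_i,\beta_j\ge 1$ for some edge $\{i,j\}$, then transferring all mass from one endpoint to the other (in whichever direction is non-increasing for the objective) keeps $|\beta|=r+2$ and $\beta\ge 0$, does not increase $\beta^T(I+A_G)\beta$, and strictly shrinks $\supp(\beta)$. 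Iterating, we may assume $\supp(\beta)=S$ is a stable set, so $\beta^T(I+A_G)\beta=\sum_{i\in S}\beta_i^2$.

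It remains to minimize $\sum_{i\in S}\beta_i^2$ subject to $\sum_{i\in S}\beta_i=r+2$, over all stable sets $S$. Padding with zeros shows that, for fixed $|S|$, the minimum is non-increasing in $|S|$, so the optimum uses a maximum stable set $|S|=\alpha(G)$; the classical spreading-out argument then yields, for $r+2=u\alpha+v$ with $0\le v\le \alpha-1$, the value $\alpha u^2+2uv+v$ (attained by $v$ entries equal to $u+1$ and $\alpha-v$ entries equal to $u$). Subtracting $r+2=u\alpha+v$ gives $\mu^*=\alpha u(u-1)+2uv=2\bigl(\binom{u}{2}\alpha+uv\bigr)$, whence $\zeta^{(r)}(G)=\binom{r+2}{2}/\bigl(\binom{u}{2}\alpha+uv\bigr)$, as announced. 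The main obstacle I anticipate is keeping the discrete Motzkin--Straus reduction clean: in the continuous setting on the simplex the argument is immediate because the second-order term is $\le 0$, but in the integer setting one must exploit the \emph{vanishing} of $(e_i-e_j)^T(I+A_G)(e_i-e_j)$ along edge directions so that a single full-mass transfer suffices to decrease the support without overshooting or getting stuck in a cycle of swaps.
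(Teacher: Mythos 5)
Your proposal is correct, and it is complete as written. Note that the chapter itself does not prove this theorem (it is quoted with a citation to Pe\~na, Vera and Zuluaga), so there is no in-paper proof to compare against; your argument is a valid self-contained derivation along the natural route. All the key steps check out: the coefficient of $x^\beta$ in $(\sum_k x_k)^r\,x^T(t(I+A_G)-J)x$ is indeed $\tfrac{r!}{\prod_k\beta_k!}\bigl[t(\beta^T(I+A_G)\beta-(r+2))-(r+2)(r+1)\bigr]$, the bracketed term is nonnegative and vanishes exactly on indicators of stable sets of size $r+2$ (which gives the infeasibility threshold $r\le\alpha(G)-2$, matching the convention $\zeta^{(r)}=\infty$), and since the finitely many constraints are of the form $t\,b_\beta\ge(r+1)(r+2)$ with $b_\beta>0$, the minimum is $(r+1)(r+2)/\mu^*$ with $\mu^*=\min_\beta b_\beta$. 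Your discrete Motzkin--Straus step is sound: along an edge direction $e_i-e_j$ the quadratic term of $\tau\mapsto(\beta+\tau(e_i-e_j))^T(I+A_G)(\beta+\tau(e_i-e_j))$ vanishes, so the map is affine in $\tau$, one of the two integral endpoints $\tau=\beta_j$ or $\tau=-\beta_i$ does not increase the value, stays nonnegative, preserves $|\beta|$, and strictly shrinks the support, so the process terminates with a stable-set-supported minimizer. The padding and balancing arguments then give $\mu^*=\alpha u(u-1)+2uv=2\bigl(\binom{u}{2}\alpha+uv\bigr)$, and since $(r+1)(r+2)=2\binom{r+2}{2}$ the claimed closed form follows, including the degenerate cases $u\in\{0,1\}$, $v$ arbitrary, where the denominator is $0$.
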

So the above result shows that the bound $\zeta^{(r)}$ is useless for $r\leq \alpha(G)-2$. 
Another consequence is that after $r=\alpha(G)^2-1$ steps we find $\alpha(G)$ up to rounding.  (See also \cite{dKP2002} where this result is shown for $r=\alpha(G)^2$). 
\begin{corollary}[\cite{PVZ2007}]\label{corollary-zeta}
We have $\lfloor{\zeta^{(r)}(G)}\rfloor=\alpha(G)$ if and only if $r\geq \alpha(G)^2-1$.
\end{corollary}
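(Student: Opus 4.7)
The plan is to derive the corollary directly from the closed-form formula of Theorem \ref{theorem-zeta}, combined with two structural facts. First, from the inclusion $\MC_n^{(r)}\subseteq \MC_n^{(r+1)}\subseteq \COP_n$ together with the copositive reformulation (\ref{alpha-cop}) of $\alpha(G)$, one reads off the double inequality $\alpha(G)\le \zeta^{(r+1)}(G)\le \zeta^{(r)}(G)$ for every $r\ge 0$. Writing $\alpha:=\alpha(G)$ for brevity, the condition $\lfloor \zeta^{(r)}(G)\rfloor = \alpha$ is therefore equivalent to the strict inequality $\zeta^{(r)}(G)<\alpha+1$, and by monotonicity it suffices to locate the smallest index $r$ at which this strict inequality first appears.

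For the ``only if'' direction, I would plug $r=\alpha^2-2$ into Theorem \ref{theorem-zeta}: then $r+2=\alpha^2=\alpha\cdot\alpha+0$ gives $(u,v)=(\alpha,0)$, and the formula simplifies to
\begin{equation*}
\zeta^{(\alpha^2-2)}(G)=\frac{\binom{\alpha^2}{2}}{\binom{\alpha}{2}\alpha}=\frac{\alpha^2-1}{\alpha-1}=\alpha+1.
\end{equation*}
Thus $\lfloor\zeta^{(\alpha^2-2)}(G)\rfloor=\alpha+1$, and by monotonicity $\lfloor\zeta^{(r)}(G)\rfloor\ge \alpha+1>\alpha$ for every $r\le \alpha^2-2$.

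For the ``if'' direction, I would evaluate at $r=\alpha^2-1$: now $r+2=\alpha^2+1=\alpha\cdot\alpha+1$ gives $(u,v)=(\alpha,1)$, and the formula yields
\begin{equation*}
\zeta^{(\alpha^2-1)}(G)=\frac{\binom{\alpha^2+1}{2}}{\binom{\alpha}{2}\alpha+\alpha}=\frac{\alpha(\alpha^2+1)}{\alpha^2-\alpha+2}.
\end{equation*}
A one-line cross-multiplication shows $\alpha(\alpha^2+1)=\alpha^3+\alpha<\alpha^3+\alpha+2=(\alpha+1)(\alpha^2-\alpha+2)$, so $\zeta^{(\alpha^2-1)}(G)<\alpha+1$. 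Combined with $\zeta^{(\alpha^2-1)}(G)\ge \alpha$, this gives $\lfloor\zeta^{(\alpha^2-1)}(G)\rfloor=\alpha$, and monotonicity propagates the conclusion to all $r\ge \alpha^2-1$.

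There is no real obstacle: the entire argument reduces to the two displayed evaluations, with monotonicity doing the bookkeeping on either side of the threshold $r=\alpha^2-1$. The only degeneracy worth flagging is $\alpha=1$ (complete graphs), where $\alpha^2-1=0$ and the formula degenerates to $\zeta^{(r)}(G)=1$ for every $r\ge 0$; the corollary then holds trivially and is consistent with the observation $\zeta^{(0)}(K_n)=1$ made before Theorem \ref{theozeta}.
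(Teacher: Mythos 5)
Your proposal is correct and follows exactly the route the paper intends: the corollary is read off from the closed-form formula of Theorem \ref{theorem-zeta} by evaluating at $r=\alpha(G)^2-2$ (giving the value $\alpha(G)+1$) and at $r=\alpha(G)^2-1$ (giving a value in $[\alpha(G),\alpha(G)+1)$), with the monotonicity $\alpha(G)\le\zeta^{(r+1)}(G)\le\zeta^{(r)}(G)$ handling all other orders. Your computations check out, and flagging the degenerate case $\alpha(G)=1$ is a sensible extra precaution.
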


\subsection{The hierarchy $\vartheta^{(r)}(G)$}\label{sec-alpha-K}

We now consider the parameter $\vartheta^{(r)}(G)$, for $r\in \oN$,  defined as follows in \cite{dKP2002}: 
\begin{equation}
\vartheta^{(r)}(G):=\min\{t:\  t(A_G+I)-J \in \MK_n^{(r)}\}.
\end{equation}
Since $\MC_n^{(r)}\subseteq \MK_n^{(r)}\subseteq \COP_n$ we have $\alpha(G)\leq \vartheta^{(r)}(G)\leq\zeta^{(r)}(G)$ for any $r\ge 0$, and thus 
 the parameters $\vartheta^{(r)}(G)$ converge asymptotically  to $\alpha(G)$ as $r\to \infty$. 

At order $r=0$, while the parameter $\zeta^{(0)}(G)=\infty$ is useless, the parameter $\vartheta^{(0)}(G)$ provides a useful bound for $\alpha(G)$. Indeed, it is shown in \cite{dKP2002} that $\vartheta^{(0)}(G)$ coincides with the  variation $\vartheta'(G)$ of the Lov\'asz theta number $\vartheta(G)$ (obtained by adding some nonnegativity constraints); so we have  the inequalities $\alpha(G)\le \vartheta'(G)=\vartheta^{(0)}(G)\le \vartheta(G)$ (see \cite{Lo79,Sch79}). This connection in fact motivates the choice of the notation $\vartheta^{(r)}(G)$.
For instance, if $G$ is a perfect graph\footnote{A graph $G$ is called {\em perfect} if its clique number $\omega(G)$ coincides with its chromatic number $\chi(G)$, and the same holds for any induced subgraph $G'$ of $G$. Here $\omega(G)$ denotes the maximum cardinality of a clique (a set of pairwise adjacent vertices) in $G$ and $\chi(G)$ is the minimum number of colors that are needed to color the vertices of $G$ in such a way that adjacent vertices receive distinct colors. An induced subgraph $G'$ of $G$ is any subgraph of $G$ of the form $G'=G[U]$, obtained by selecting a subset $U\subseteq V$ and keeping only the edges of $G$ that are contained in $U$.}, then we have $\vartheta(G)=\vartheta^{(0)}(G)=\alpha(G)$ (see  \cite{GLS} for a broad exposition). We also have $\vartheta(C_5)=\vartheta^{(0)}(C_5)$ (note that $C_5$ is not a perfect graph since $\omega(C_5)=2<\chi(C_5)=3$). But there exist graphs for which $\alpha(G)=\vartheta^{(0)}(G)<\vartheta(G)$ (see, e.g., \cite{Best}).

In Theorem \ref{theozeta} we saw that the bounds $\zeta^{(r)}(G)$ are never exact. This raises naturally the question of whether the (stronger) bonds $\vartheta^{(r)}(G)$ may be exact.  Recall the definition of  the graph matrix $M_G=\alpha(G)(A_G+I)-J$ in (\ref{matG}), and define the associated polynomial $p_G:=(x^{\circ2})^TM_Gx^{\circ2}$. Then, for any $r\in \mathbb{N}$, we have
$$\vartheta^{(r)}(G)=\alpha(G)\ \Longleftrightarrow \ M_G\in \MK_n^{(r)}\  \Longleftrightarrow \Big(\sum_{i=1}^{n}x_i^2\Big)^rp_G\in \Sigma.$$
As $M_G$ is copositive the polynomial $p_G$ is globally nonnegative. The point however is that $p_G$ has zeros in $\mathbb{R}^n\setminus \{0\}$. In particular,  every stable set $S\subseteq V$ of cardinality $\alpha(G)$ provides a zero $x=\chi^S$. Thus the question of whether $p_G$ admits  a positivity certificate of the form $(\sum_{I=1}^nx_i^2)^rp_G\in \Sigma$ for some $r\in\oN$ (as in (\ref{cert-reznick})) is nontrivial. 
 In \cite{dKP2002} it was in fact conjectured that such a certificate exists at order $r=\alpha(G)-1$; in other words,  that the parameter $\vartheta^{(r)}(G)$ is exact at order $r=\alpha(G)-1$. 


\medskip\noindent
{\bf Conjecture 1 (de Klerk and Pasechnik \cite{dKP2002}})

{\em For any graph $G$, we have $\vartheta^{(\alpha(G)-1)}(G)=\alpha(G)$,  or, equivalently, we have $M_G~\in~\MK_n^{(\alpha(G)-1)}$.}

\medskip

\begin{trailer}{Comparison of the parameters $\vartheta^{(r)}(G)$ and $\las^{(r)}(G)$}
At the beginning of Section \ref{section-alpha} we introduced the parameters $\las^{(r)}(G)$.
In \cite{GL2007} it is shown that, for any integer $r\ge 1$,  a slight strengthening of the parameter $\las^{(r)}(G)$ (obtained by adding some nonnegativity constraints) is at least as good as the parameter $\vartheta^{(r-1)}(G)$. The bounds $\las^{(r)}(G)$ are known to converge to $\alpha(G)$ in $\alpha(G)$ steps, i.e., $\las^{(\alpha(G))}(G)=\alpha(G)$. Thus Conjecture~1 asks whether a similar property holds for the parameters $\vartheta^{(r)}(G)$.  
While the finite convergence property for the Lasserre-type bounds is relatively easy to prove (by exploiting the fact  that one works modulo the ideal generated by  $x_i^2-x_i$ for $i\in V$ and $x_ix_j$ for $\{i,j\}\in E$)), proving Conjecture~1 seems much more challenging.
\end{trailer}

 Conjecture~1 is known to hold for some graph classes. For instance, we saw above that it holds for perfect graphs (with $r=0$), but it also holds for odd cycles and their complements -- that are not perfect (with $r=1$, see \cite{dKP2002}). In \cite{GL2007} Conjecture~1  was shown to hold for all graphs $G$ with $\alpha(G)\leq 8$ (see also \cite{PVZ2007} for the case $\alpha(G)\leq 6$). In fact, a stronger result is shown there: the proof relies on a technical construction of matrices that permit to certify  membership of $M_G$ in the cones $\MQ^{(r)}_n$ (and thus in the cones $\MK_n^{(r)}$).
 
\begin{theorem}[Gvozdenovi\'{c}, Laurent \cite{GL2007}]\label{alpha-8}
Let $G$ be a graph with $\alpha(G)\leq 8$. Then we have $\vartheta^{(\alpha(G)-1)}(G)=\alpha(G)$, or, equivalently, $M_G\in \MK^{(\alpha(G)-1)}_n$.
\end{theorem}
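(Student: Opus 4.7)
My plan is to prove the stronger claim $M_G \in \MQ_n^{(\alpha(G)-1)}$ for every graph $G$ with $\alpha(G)\le 8$, which by the inclusions (\ref{inclusion-cones}) gives $M_G \in \MK_n^{(\alpha(G)-1)}$ and thus the desired equality $\vartheta^{(\alpha(G)-1)}(G)=\alpha(G)$. Setting $r:=\alpha(G)-1$ and invoking the characterization (\ref{eqQr}) of $\MQ_n^{(r)}$, what must be produced is a decomposition
\begin{equation*}
\Big(\sum_{i=1}^n x_i\Big)^{r} x^T M_G x \;=\; \sum_{\substack{\beta\in\mathbb{N}^n\\ |\beta|=r}} \sigma_\beta(x)\,x^\beta \;+\; \sum_{\substack{\beta\in\mathbb{N}^n\\ |\beta|=r+2}} c_\beta\, x^\beta,
\end{equation*}
where, for $|\beta|=r$, $\sigma_\beta$ is a sum of squares of degree two (equivalently, $\sigma_\beta = x^T P_\beta x$ for some $P_\beta \in \MS_+^n$), and for $|\beta|=r+2$, $c_\beta\ge 0$. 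Expanding the left-hand side by the multinomial theorem and the explicit form $M_G=\alpha(G)(I+A_G)-J$ pins down the target coefficient of each monomial $x^\gamma$ with $|\gamma|=r+2$ that the right-hand side must reproduce.

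The heart of the argument is an explicit combinatorial construction of the matrices $P_\beta$. For each $\beta$ with $|\beta|=r$, I would build $P_\beta$ using the principal submatrix of $M_G$ indexed by $\supp(\beta)$, calibrated by means of the Motzkin--Straus identity $\min_{x\in\Delta_n} x^T(I+A_G)x = 1/\alpha(G)$ applied to the subgraph induced on $\supp(\beta)\cup\{i,j\}$ when an extra edge $\{i,j\}$ is incorporated to form a degree-$(r+2)$ monomial. The construction distributes the contribution of $x^T M_G x \cdot x^\beta$ between the quadratic block $x^T P_\beta x$ (which absorbs all cross-terms that can be written as a quadratic SoS supported on $\supp(\beta)$) and the residual pure monomials of degree $r+2$, whose coefficients become the $c_\beta$'s. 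Coefficient-matching against the expanded left-hand side fixes the construction uniquely.

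Verification then splits into two parts. First, one must check $c_\beta \ge 0$ for $|\beta|=r+2$; this is the easier step and follows from P\'olya-type coefficient bookkeeping together with the copositivity of $M_G$ on the coordinate directions corresponding to $\supp(\gamma)$ for $\gamma=\beta$. Second, and crucially, one must prove $P_\beta\succeq 0$ for every $\beta$ with $|\beta|=r$. This PSD verification reduces, after a Schur-complement argument, to a family of scalar inequalities that are controlled by the value of $\alpha(G)$ and the combinatorics of the induced subgraphs $G[\supp(\beta)\cup\{i,j\}]$.

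The main obstacle is exactly this PSD check. The available slack for absorbing the negative rank-one contribution of $-J$ into the PSD block scales unfavorably with $\alpha(G)$, and the threshold $\alpha(G)\le 8$ is precisely the point at which this slack remains sufficient uniformly over all graphs $G$. I expect that the $\alpha(G)\le 6$ case (treated in \cite{PVZ2007}) admits a clean closed-form verification, while the extension to $\alpha(G)\in\{7,8\}$ in \cite{GL2007} requires a finer choice of $P_\beta$ (for example, a parametric family of PSD completions) and a careful analysis of the worst-case induced subgraphs. Pushing the same style of construction past $\alpha(G)=8$ appears to require a fundamentally different idea, which is consistent with Conjecture~1 being open at that time by these techniques.
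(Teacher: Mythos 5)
Your overall route---proving the stronger statement $M_G\in\MQ_n^{(\alpha(G)-1)}$ and then using the inclusion $\MQ_n^{(r)}\subseteq\MK_n^{(r)}$ from (\ref{inclusion-cones})---is exactly the route this chapter attributes to \cite{GL2007}, so the plan points in the right direction. But as written it is a plan, not a proof: the entire mathematical content of the theorem sits in the two steps you leave open, namely the explicit definition of the matrices $P_\beta$ and scalars $c_\beta$ in the decomposition (\ref{eqQr}), and the verification that $P_\beta\succeq 0$ and $c_\beta\ge 0$ for \emph{every} graph with $\alpha(G)\le 8$. You acknowledge this yourself (``the main obstacle is exactly this PSD check''), and the assertion that $\alpha(G)\le 8$ ``is precisely the point at which this slack remains sufficient'' is precisely what has to be proved; no computation, eigenvalue estimate, or worst-case analysis supporting it is given, and the distinction you draw between the cases $\alpha\le 6$ and $\alpha\in\{7,8\}$ is likewise only an expectation.

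Beyond incompleteness, several of the heuristics are doubtful as stated. In (\ref{eqQr}) the sums of squares $\sigma_\beta$ with $|\beta|=r$ are quadratic forms in all $n$ variables, whereas $(\sum_{i=1}^n x_i)^r\,x^TM_Gx$ produces monomials $x^\beta x_ix_j$ with $i,j$ ranging over all of $[n]$; a matrix $P_\beta$ fashioned from the principal submatrix $M_G[\supp(\beta)]$ cannot account for these cross terms, and $M_G[\supp(\beta)]$ need not even be positive semidefinite when $\supp(\beta)$ meets edges of $G$. Coefficient matching also does not ``fix the construction uniquely'': a given monomial's coefficient can be distributed among many different $\beta$'s and between the SoS blocks and the constants $c_\beta$, and exhibiting one feasible distribution is exactly the difficulty the construction in \cite{GL2007} resolves. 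Finally, the suggestion that $c_\beta\ge 0$ follows from P\'olya-type bookkeeping plus copositivity cannot stand on its own: $x^TM_Gx$ vanishes on the simplex at the maximum stable sets, and Theorem \ref{theozeta} shows $M_G\notin\MC_n^{(r)}$ for all $r$ whenever $G$ is not complete, so the nonnegative-coefficient part can never carry the negative contributions of $-J$ by itself; whether the residual coefficients are nonnegative depends entirely on the (unspecified) choice of the $P_\beta$. In short, you have reproduced the strategy of \cite{GL2007} at the level of its statement, but the technical construction and its verification---which constitute the proof---are missing.
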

Whether Conjecture~1 holds in general is still an open problem. However, a weaker form of it has been recently settled; namely finite convergence of the hierarchy $\vartheta^{(r)}(G)$ to $\alpha(G)$, or, equivalently, membership of the graph matrices $M_G$ in $\bigcup_r\MK^{(r)}_n$.

\begin{theorem}[Schweighofer, Vargas \cite{SV}]\label{finite-conv}
For any graph $G$, we have $\vartheta^{(r)}(G)=\alpha(G)$ for some $r\in \mathbb{N}$. Equivalently, we have $M_G\in\bigcup_r\MK^{(r)}_n$.
\end{theorem}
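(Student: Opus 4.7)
The plan is to adapt the pure-state strategy used in Theorem \ref{DHD-theorem} for the Horn matrix $H = M_{C_5}$ to an arbitrary graph matrix $M_G$. By the chain of equivalences in Theorem \ref{theo-link}, showing $M_G \in \bigcup_r \MK^{(r)}_n$ amounts to exhibiting a Putinar-type representation of the nonnegative polynomial $p_G := (x^{\circ 2})^T M_G x^{\circ 2}$ on the unit sphere, namely
\[
p_G = \sigma + q\Big(1-\sum_{i=1}^n x_i^2\Big) \quad \text{for some } \sigma \in \Sigma,\ q \in \oR[x].
\]
Producing such a decomposition is the sole task.

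To see what stands in the way of a direct sum-of-squares certificate, I would first analyze the zero set of $p_G$ on $\mathbb{S}^{n-1}$. Setting $y = x^{\circ 2}$, the Motzkin--Straus identity shows that $y^T M_G y$ attains its minimum $0$ on $\Delta_n$ precisely on the convex hull of the vectors $\chi^S/\alpha(G)$ indexed by maximum stable sets $S$ of $G$. Pulled back under the square map, this zero set is in general positive-dimensional (and is zero-dimensional only in the special case where the maximum stable sets of $G$ are pairwise disjoint). This is exactly why Nie's finite convergence criterion (Theorem \ref{theo-Nie}) cannot be invoked directly, in sharp contrast with the Hildebrand matrices $T(\psi)$ handled in Theorem \ref{Tpsi}.

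The central tool for bypassing this obstruction is the theory of pure states in real algebraic geometry from \cite{BSS}. A pure-state criterion reduces the existence of a Putinar representation of $p_G$ modulo the sphere ideal to checking that $\phi(p_G)\ge 0$ for every pure state $\phi$ of the relevant quadratic module $\Sigma + I_{\mathbb{S}^{n-1}}$, together with a tangential second-order positivity condition along the zero locus. One classifies these pure states using the combinatorics of the maximum stable sets of $G$: states supported at an isolated zero $x_S$ (with $x_i = 1/\sqrt{\alpha(G)}$ for $i\in S$ and $x_i=0$ otherwise) are controlled by a Hessian computation in the spirit of the SOSC analysis of Proposition \ref{bla}, while states carried by positive-dimensional strata require factoring $p_G$ through the corresponding support ideal and are absorbed using the Horn-matrix input from Theorem \ref{DHD-theorem}, combined with the explicit characterisation of the diagonal scalings of $H$ lying inside $\MK^{(1)}_n$ given in \cite{LV2021b}.

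The hard part is precisely this pure-state analysis for an arbitrary graph: the positive-dimensional zero locus of $p_G$ encodes the intersection pattern of the maximum stable sets of $G$, and matching this combinatorial structure against the algebraic positivity of $p_G$ along every tangential direction is delicate. Carrying this out uniformly in $G$ --- in particular, showing that no obstruction beyond the Horn one can arise, so that Theorem \ref{DHD-theorem} together with the local second-order data suffices at every pure state --- is the core technical content of \cite{SV}, and is where the method departs most substantially from the Nie-style argument used for the matrices $T(\psi)$.
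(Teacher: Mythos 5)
The approach you outline is not the one the paper (and \cite{SV}) actually uses, and as written it has a genuine gap. The proof sketched in the chapter does \emph{not} run a pure-state analysis directly on $p_G$ for an arbitrary graph. It first reduces the statement ``$M_G\in\bigcup_r\MK^{(r)}_n$ for every $G$'' to a single preservation statement via Theorem \ref{equiv} (from \cite{LV2021b}): it suffices to show that membership in $\bigcup_r\MK^{(r)}_n=\bigcup_r\LAS^{(r)}_{\mathbb S^{n-1}}$ is preserved when an isolated node is added, the role of acritical graphs (handled by the Nie-type simplex argument, Theorem \ref{finite-acritical}) being to anchor this reduction. The pure-state machinery of \cite{BSS} is then applied only to prove that preservation statement, Theorem \ref{isolated-preserve}, i.e., that a representation $p_G=\sigma_0+q\bigl(\sum_{i=1}^n x_i^2-1\bigr)$ with $\sigma_0\in\Sigma$ persists for $G\oplus i_0$. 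Your plan instead asks for a direct pure-state certification of $p_G$ for arbitrary $G$, with the positive-dimensional zero strata ``absorbed'' by the Horn-matrix result, Theorem \ref{DHD-theorem}. You give no argument for why the only positive-dimensional obstructions are Horn-like, and they are not: by Theorem \ref{minimizers-ms} such strata arise from critical edges and clique configurations in complete generality (a single critical edge in a graph with no induced $C_5$ already produces them), so Theorem \ref{DHD-theorem} cannot play that role. In effect the entire difficulty is deferred to ``the core technical content of \cite{SV}'' while the proposed route is not the one carried out there, and it omits the key idea (the isolated-node reduction) that makes the actual proof work.

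In addition, the description of the zero set on which your classification of pure states would rest is incorrect. The zeros of $x^TM_Gx$ on $\Delta_n$ are \emph{not} the convex hull of the points $\chi^S/\alpha(G)$: for $G=C_4$ the barycenter $(\tfrac14,\tfrac14,\tfrac14,\tfrac14)$ of the two maximum-stable-set points is not a zero. The correct characterization is Theorem \ref{minimizers-ms} (supports inducing $\alpha(G)$ cliques, each carrying weight $1/\alpha(G)$), and finiteness of the zero set is equivalent to $G$ being acritical (Corollary \ref{corzerosMG}), not to the maximum stable sets being pairwise disjoint: the Petersen graph is acritical yet its maximum stable sets pairwise intersect, while $K_2$ has disjoint maximum stable sets but a critical edge and hence infinitely many zeros. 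Since your pure-state analysis is organized around this zero-set picture, the error propagates through the whole plan.
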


In what follows we discuss some of the ingredients that are used for the proof of this result.
Here too, we will use the fact that 
$\bigcup_r \LAS^{(r)}_{\Delta_n}\subseteq \bigcup_r \MK^{(r)}_n=\bigcup_r \LAS^{(r)}_{\mathbb S^{n-1}}$ (recall Theorem~\ref{theo-link}) and so we we will consider the quadratic form $x^T M_Gx$ instead of the quartic form $p_G=(x^{\circ 2})^TM_Gx^{\circ 2}$. 
Whether  the quadratic form $x^TM_Gx$ has finitely many zeros in the simplex plays an important role. 
We will  first discuss the case when there are finitely many zeros, in which case one can show a stronger result, namely membership of $M_G$ in $\bigcup_r\LAS^{(r)}_{\Delta_n}$ (see Theorem~\ref{finite-acritical}
 below). 

\medskip
As we will see in  Corollary \ref{corzerosMG} below, whether the number of zeros of $x^T M_Gx$ in $\Delta_n$ is finite   is directly related to the notion of critical edges in the graph $G$. We first introduce this graph notion.

\begin{example}{Critical edges}
Let $G=(V,E)$ be a graph. The edge $e\in E$ is \textit{critical} is $\alpha(G\setminus e)=\alpha(G)+1$. Here $G\setminus e$ denotes the graph $(V,E\setminus\{e\})$.
\begin{center}
	\definecolor{ududff}{rgb}{0.30196078431372547,0.30196078431372547,1.}
	\definecolor{f}{rgb}{0., 0., 0.}
	\begin{tikzpicture}[line cap=round,line join=round,=triangle 45,x=.6cm,y=.6cm]
	\clip(-1.44,1.34) rectangle (3.42,6.28);
	\draw [line width=2.pt] (-0.02,3.18)-- (0.98,3.98);
	\draw [line width=2.pt] (0.98,3.98)-- (1.98,3.22);
	\draw [thick, dashed] (1.74,1.96)-- (1.98,3.22);
	\draw [thick, dashed] (0.28,1.94)-- (-0.02,3.18);
	\draw [line width=2.pt] (0.28,1.94)-- (1.74,1.96);
	\draw [line width=2.pt] (0.98,3.98)-- (0.98,5.28);
	\begin{scriptsize}
	\draw [fill=f] (0.98,3.98) circle (2.5pt);
	\draw [fill=f] (-0.02,3.18) circle (2.5pt);
	\draw [fill=f] (0.28,1.94) circle (2.5pt);
	\draw [fill=f] (1.74,1.96) circle (2.5pt);
	\draw [fill=f] (1.98,3.22) circle (2.5pt);
	\draw [fill=f] (0.98,5.28) circle (2.5pt);
	\end{scriptsize}
	\end{tikzpicture}
	\end{center}
For example, for the above graph, the two dashed edges are its critical  edges.
	\end{example}
	
\begin{example}{Critical graphs}
We say that $G$ is \textit{critical} if all its edges are critical. For example,  odd cycles are critical graphs.
The next figure shows the 5-cycle $C_5$. \ignore{
\definecolor{uuuuuu}{rgb}{0.26666666666666666,0.26666666666666666,0.26666666666666666}
\definecolor{uququq}{rgb}{0.25098039215686274,0.25098039215686274,0.25098039215686274}
\begin{tikzpicture}[line cap=round,line join=round,=triangle 45,x=.3cm,y=.3cm]
\draw [line width=1pt] (-13.578509963461387,8.326106157908189)-- (-11.19999999999867,10.054194799648329);
\draw [line width=1pt] (-11.19999999999867,10.054194799648329)-- (-8.821490036535954,8.326106157908187);
\draw [line width=1pt] (-8.821490036535954,8.326106157908187)-- (-9.72999999999844,5.53);
\draw [line width=1pt] (-12.6699999999989,5.53)-- (-9.72999999999844,5.53);
\draw [line width=1pt] (-12.6699999999989,5.53)-- (-13.578509963461387,8.326106157908189);
\begin{scriptsize}
\draw [fill=uququq] (-12.6699999999989,5.53) circle (2.5pt);
\draw[color=uququq] (-13.66, 5.52);
\draw [fill=uququq] (-9.72999999999844,5.53) circle (2.5pt);
\draw[color=uququq] (-8.92, 5.52);
\draw [fill=uuuuuu] (-8.821490036535954,8.326106157908187) circle (2.5pt);
\draw[color=uququq] (-8.0, 8.52);
\draw [fill=uuuuuu] (-11.19999999999867,10.054194799648329) circle (2.5pt);
\draw[color=uququq] (-12, 10.5);
\draw [fill=uuuuuu] (-13.578509963461387,8.326106157908189) circle (2.5pt);
\draw[color=uququq] (-14.3, 8.9);
\end{scriptsize}
\end{tikzpicture}
}
\begin{center}
\definecolor{ududff}{rgb}{0.30196078431372547,0.30196078431372547,1}
\definecolor{uuuuuu}{rgb}{0.26666666666666666,0.26666666666666666,0.26666666666666666}
\definecolor{uququq}{rgb}{0.25098039215686274,0.25098039215686274,0.25098039215686274}
\begin{tikzpicture}[line cap=round,line join=round,=triangle 45,x=0.7cm,y=.7cm]
\clip(-8.774346433433802,2.2925404618158955) rectangle (-2.277383759104841,7.904409670366731);
\draw [line width=2.2pt] (-7.118033988749895,5.552113032590308)-- (-5.5,6.727683537175254);
\draw [line width=2.2pt] (-5.5,6.727683537175254)-- (-3.881966011250105,5.552113032590308);
\draw [line width=2.2pt] (-3.881966011250105,5.552113032590308)-- (-4.5,3.65);
\draw [line width=2.6pt] (-4.5,3.65)-- (-6.5,3.65);
\draw [line width=2.2pt] (-6.5,3.65)-- (-7.118033988749895,5.552113032590308);
\draw [line width=2.2pt] (-2.118033988749895,5.492113032590307)-- (-0.5,6.667683537175253);
\draw [line width=2.2pt] (-0.5,6.667683537175253)-- (1.118033988749895,5.492113032590306);
\draw [line width=2.2pt] (1.118033988749895,5.492113032590306)-- (0.5,3.59);
\draw [line width=2.2pt] (0.5,3.59)-- (-1.5,3.59);
\draw [line width=2.2pt] (-1.5,3.59)-- (-2.118033988749895,5.492113032590307);
\draw [line width=2.2pt,dash pattern=on 1pt off 1pt] (-0.5,6.667683537175253)-- (-0.5071968275485451,8.394035842924854);
\draw [line width=2.2pt] (-6.32622,0.9425110481835794)-- (-4.72551,0.9425110481835786);
\draw [line width=2.2pt] (-4.72551,0.9425110481835786)-- (-3.9251549999999993,-0.4437444759082114);
\draw [line width=2.2pt] (-3.9251549999999993,-0.4437444759082114)-- (-4.72551,-1.83);
\draw [line width=2.2pt] (-4.72551,-1.83)-- (-6.32622,-1.83);
\draw [line width=2.2pt] (-6.32622,-1.83)-- (-7.126575000000001,-0.4437444759082093);
\draw [line width=2.2pt] (-7.126575000000001,-0.4437444759082093)-- (-6.32622,0.9425110481835794);
\draw [line width=2.2pt] (-2.0190801509178193,-0.09932688610350526)-- (-0.40169272594544636,1.077133537175253);
\draw [line width=2.2pt] (-0.40169272594544636,1.077133537175253)-- (1.21698782658197,-0.09754704871588105);
\draw [line width=2.2pt] (1.21698782658197,-0.09754704871588105)-- (0.6,-2);
\draw [line width=2.2pt] (0.6,-2)-- (-1.4,-2.0011);
\draw [line width=2.2pt] (-1.4,-2.0011)-- (-2.0190801509178193,-0.09932688610350526);
\draw [line width=2.2pt] (0,-1.2)-- (-1.047213595499958,-0.439154786963877);
\draw [line width=2.2pt] (-1.047213595499958,-0.439154786963877)-- (0.24721359549995792,-0.4391547869638774);
\draw [line width=2.2pt] (0.24721359549995792,-0.4391547869638774)-- (-0.8,-1.2);
\draw [line width=2.2pt] (-0.8,-1.2)-- (-0.4,0.03107341487010118);
\draw [line width=2.2pt] (-0.4,0.03107341487010118)-- (0,-1.2);
\draw [line width=2.2pt] (-0.4,0.03107341487010118)-- (-0.40169272594544636,1.077133537175253);
\draw [line width=2.2pt] (0.24721359549995792,-0.4391547869638774)-- (1.21698782658197,-0.09754704871588105);
\draw [line width=2.2pt] (0,-1.2)-- (0.6,-2);
\draw [line width=2.2pt] (-0.8,-1.2)-- (-1.4,-2.0011);
\draw [line width=2.2pt] (-1.047213595499958,-0.439154786963877)-- (-2.0190801509178193,-0.09932688610350526);
\draw [line width=2pt] (-7,-3.5)-- (-3.878084707852556,-4.034936229704507);
\begin{scriptsize}
\draw [fill=black] (-6.5,3.65) circle (4pt);
\draw [fill=black] (-4.5,3.65) circle (4pt);
\draw [fill=black] (-3.881966011250105,5.552113032590308) circle (4pt);
\draw [fill=black] (-5.5,6.727683537175254) circle (4pt);
\draw [fill=black] (-7.118033988749895,5.552113032590308) circle (4pt);
\draw [fill=black] (-1.5,3.59) circle (4pt);
\draw [fill=black] (0.5,3.59) circle (4pt);
\draw [fill=black] (1.118033988749895,5.492113032590306) circle (4pt);
\end{scriptsize}
\end{tikzpicture}

\end{center}
\end{example}
\begin{example}{Acritical graphs}
We say that $G$ is \textit{acritical} if it does not have critical edges. Every even cycle is acritical, as well as the Petersen graph. The next figure shows the 6-cycle $C_6$ and the Petersen graph.
\begin{center}
\definecolor{ududff}{rgb}{0.30196078431372547,0.30196078431372547,1}
\definecolor{uuuuuu}{rgb}{0.26666666666666666,0.26666666666666666,0.26666666666666666}
\definecolor{uququq}{rgb}{0.25098039215686274,0.25098039215686274,0.25098039215686274}
\begin{tikzpicture}[line cap=round,line join=round,=triangle 45,x=.7cm,y=.7cm]
\clip(-7.722083293278789,-2.6468648622780937) rectangle (2.529540021760409,2.6920000915198554);
\draw [line width=2.2pt] (-7.118033988749895,5.552113032590308)-- (-5.5,6.727683537175254);
\draw [line width=2.2pt] (-5.5,6.727683537175254)-- (-3.881966011250105,5.552113032590308);
\draw [line width=2.2pt] (-3.881966011250105,5.552113032590308)-- (-4.5,3.65);
\draw [line width=2.6pt] (-4.5,3.65)-- (-6.5,3.65);
\draw [line width=2.2pt] (-6.5,3.65)-- (-7.118033988749895,5.552113032590308);
\draw [line width=2.2pt] (-2.118033988749895,5.492113032590307)-- (-0.5,6.667683537175253);
\draw [line width=2.2pt] (-0.5,6.667683537175253)-- (1.118033988749895,5.492113032590306);
\draw [line width=2.2pt] (1.118033988749895,5.492113032590306)-- (0.5,3.59);
\draw [line width=2.2pt] (0.5,3.59)-- (-1.5,3.59);
\draw [line width=2.2pt] (-1.5,3.59)-- (-2.118033988749895,5.492113032590307);
\draw [line width=2.2pt,dash pattern=on 1pt off 1pt] (-0.5,6.667683537175253)-- (-0.5071968275485451,8.394035842924854);
\draw [line width=2.2pt] (-6.32622,0.7725110481835795)-- (-4.72551,0.7725110481835786);
\draw [line width=2.2pt] (-4.72551,0.7725110481835786)-- (-3.9251549999999993,-0.6137444759082114);
\draw [line width=2.2pt] (-3.9251549999999993,-0.6137444759082114)-- (-4.72551,-2);
\draw [line width=2.2pt] (-4.72551,-2)-- (-6.32622,-2);
\draw [line width=2.2pt] (-6.32622,-2)-- (-7.126575000000001,-0.6137444759082092);
\draw [line width=2.2pt] (-7.126575000000001,-0.6137444759082092)-- (-6.32622,0.7725110481835795);
\draw [line width=2.2pt] (-2.018033988749895,-0.09788696740969272)-- (-0.4,1.0776835371752531);
\draw [line width=2.2pt] (-0.4,1.0776835371752531)-- (1.218033988749895,-0.09788696740969349);
\draw [line width=2.2pt] (1.218033988749895,-0.09788696740969349)-- (0.6,-2);
\draw [line width=2.2pt] (0.6,-2)-- (-1.4,-2);
\draw [line width=2.2pt] (-1.4,-2)-- (-2.018033988749895,-0.09788696740969272);
\draw [line width=2.2pt] (0,-1.2)-- (-1.047213595499958,-0.439154786963877);
\draw [line width=2.2pt] (-1.047213595499958,-0.439154786963877)-- (0.24721359549995792,-0.4391547869638774);
\draw [line width=2.2pt] (0.24721359549995792,-0.4391547869638774)-- (-0.8,-1.2);
\draw [line width=2.2pt] (-0.8,-1.2)-- (-0.4,0.03107341487010118);
\draw [line width=2.2pt] (-0.4,0.03107341487010118)-- (0,-1.2);
\draw [line width=2.2pt] (-0.4,0.03107341487010118)-- (-0.4,1.0776835371752531);
\draw [line width=2.2pt] (0.24721359549995792,-0.4391547869638774)-- (1.218033988749895,-0.09788696740969349);
\draw [line width=2.2pt] (0,-1.2)-- (0.6,-2);
\draw [line width=2.2pt] (-0.8,-1.2)-- (-1.4,-2);
\draw [line width=2.2pt] (-1.047213595499958,-0.439154786963877)-- (-2.018033988749895,-0.09788696740969272);
\draw [line width=2pt] (-7,-3.5)-- (-3.878084707852556,-4.034936229704507);
\begin{scriptsize}
\draw [fill=uququq] (-6.5,3.65) circle (3.5pt);
\draw [fill=uququq] (-4.5,3.65) circle (3.5pt);
\draw [fill=uuuuuu] (-3.881966011250105,5.552113032590308) circle (3.5pt);
\draw [fill=uuuuuu] (-5.5,6.727683537175254) circle (3.5pt);
\draw [fill=uuuuuu] (-7.118033988749895,5.552113032590308) circle (3.5pt);
\draw [fill=uququq] (-1.5,3.59) circle (3.5pt);
\draw [fill=uququq] (0.5,3.59) circle (3.5pt);
\draw [fill=uuuuuu] (1.118033988749895,5.492113032590306) circle (3.5pt);
\draw [fill=uuuuuu] (-0.5,6.667683537175253) circle (3.5pt);
\draw [fill=uuuuuu] (-2.118033988749895,5.492113032590307) circle (3.5pt);
\draw [fill=uququq] (-0.5071968275485451,8.394035842924854) circle (3.5pt);
\draw [fill=black] (-6.32622,-2) circle (3.5pt);
\draw [fill=black] (-4.72551,-2) circle (3.5pt);
\draw [fill=black] (-3.9251549999999993,-0.6137444759082114) circle (3.5pt);
\draw [fill=black] (-4.72551,0.7725110481835786) circle (3.5pt);
\draw [fill=black] (-6.32622,0.7725110481835795) circle (3.5pt);
\draw [fill=black] (-7.126575000000001,-0.6137444759082092) circle (3.5pt);
\draw [fill=black] (-1.4,-2) circle (3.5pt);
\draw [fill=black] (0.6,-2) circle (3.5pt);
\draw [fill=black] (1.218033988749895,-0.09788696740969349) circle (3.5pt);
\draw [fill=black] (-0.4,1.0776835371752531) circle (3.5pt);
\draw [fill=black] (-2.018033988749895,-0.09788696740969272) circle (3.5pt);
\draw [fill=black] (-0.8,-1.2) circle (3.5pt);
\draw [fill=black] (0,-1.2) circle (3.5pt);
\draw [fill=black] (0.24721359549995792,-0.4391547869638774) circle (3.5pt);
\draw [fill=black] (-0.4,0.03107341487010118) circle (3.5pt);
\draw [fill=black] (-1.047213595499958,-0.439154786963877) circle (3.5pt);
\draw [fill=ududff] (-7,-5.5) circle (2.5pt);
\draw [fill=ududff] (-5,-5.5) circle (2.5pt);
\draw [fill=uuuuuu] (-5,-3.5) circle (2.5pt);
\draw [fill=uuuuuu] (-7,-3.5) circle (2.5pt);
\draw [fill=ududff] (-3,-5.5) circle (2.5pt);
\draw [fill=ududff] (-3,-3.5) circle (2.5pt);
\draw [fill=ududff] (-3.878084707852556,-4.034936229704507) circle (2.5pt);
\draw[color=ududff] (-2.483478714200079,-3.38628512583227) node {$K_1$};
\draw[color=black] (-5.3408993936975575,-3.7873266247091113) node {$l_3$};
\end{scriptsize}
\end{tikzpicture}
\end{center}
\end{example}

We now explain the role played by the critical edges in the description of the zeros of the form $x^TM_Gx$ in the simplex $\Delta_n$. First, note that, if $S$ is a stable set of size $\alpha(G)$, then $x=\chi^S/|S|$ is a zero. However, in general, there are more zeros. A characterization of the zeros was given in \cite{LV2021a} (see also \cite{GHPR}).

\begin{theorem}[\cite{LV2021a}]\label{minimizers-ms} 
Let $x\in \Delta_n$ with support $S:=\{i\in V:x_i>0\}$ and let $V_1, V_2, \dots, V_k$ denote the connected components of  $G[S]$, the subgraph of $G$ induced by the support $S$ of $x$. Then $x$ is a zero of the form $x^TM_Gx$ if and only if $k=\alpha(G)$ and, for all $h\in [k]$, $V_h$ is a clique of $G$ and $\sum_{i\in V_h}x_i=\frac{1}{\alpha(G)}$. In addition, the edges that are contained in $S$ 
 are critical edges of $G$.
\end{theorem}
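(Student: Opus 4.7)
The plan is to view $x^TM_Gx$ through the Motzkin--Straus lens. For $x\in\Delta_n$,
$$x^TM_Gx=\alpha(G)\,x^T(I+A_G)x-(e^Tx)^2=\alpha(G)\,x^T(I+A_G)x-1,$$
so $x^TM_Gx=0$ iff $x$ attains the Motzkin--Straus minimum $1/\alpha(G)$ of $x^T(I+A_G)x$ on $\Delta_n$. The easy direction $(\Leftarrow)$ is then a one-line computation: if the $V_h$ are cliques with $y_h:=\sum_{i\in V_h}x_i=1/\alpha(G)$ and $k=\alpha(G)$, then the absence of edges between distinct components gives $x^T(I+A_G)x=\sum_h\sum_{i,j\in V_h}x_ix_j=\sum_h y_h^2=k/\alpha(G)^2=1/\alpha(G)$.

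For the forward direction, the same absence-of-cross-edges observation splits the form as $x^T(I+A_G)x=\sum_h x|_{V_h}^T(I+A_{G[V_h]})x|_{V_h}$. Applying Motzkin--Straus in each $G[V_h]$, then the Engel (Cauchy--Schwarz) inequality $\sum_h y_h^2/\alpha_h \ge (\sum_h y_h)^2/\sum_h \alpha_h$, and finally $\sum_h\alpha(G[V_h])=\alpha(G[S])\le\alpha(G)$, yields
$$\frac{1}{\alpha(G)}=\sum_h x|_{V_h}^T(I+A_{G[V_h]})x|_{V_h}\ge\sum_h\frac{y_h^2}{\alpha(G[V_h])}\ge\frac{1}{\sum_h\alpha(G[V_h])}\ge\frac{1}{\alpha(G)}.$$
Equality throughout forces $\alpha(G[S])=\alpha(G)$, the ratios $y_h/\alpha(G[V_h])$ to be constant in $h$, and each rescaled restriction $x|_{V_h}/y_h$ to be a Motzkin--Straus minimizer on $G[V_h]$ with \emph{full support}.

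The only substantive step is then the following lemma: \emph{a connected graph $H$ that admits a full-support Motzkin--Straus minimizer $\tilde x$ is a clique}. Indeed, fix any maximum stable set $T$ of $H$. From the KKT identity $(I+A_H)\tilde x=(1/\alpha(H))\,e$, evaluating $\tilde x^T(I+A_H)\chi^T$ two ways gives $\sum_{i\notin T}\tilde x_i(|N_H(i)\cap T|-1)=0$. Maximality of $T$ yields $|N_H(i)\cap T|\ge1$ for all $i\notin T$, and positivity of $\tilde x_i$ then forces $|N_H(i)\cap T|=1$, partitioning $V(H)\setminus T$ into stars $U_l$ around the vertices $t_l\in T$. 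Now apply the same relation to the swapped maximum stable set $T'=(T\setminus\{t_l\})\cup\{u\}$ for any $u\in U_l$ (stable by the unique-neighbour condition): for $v\in U_l\setminus\{u\}$, requiring $|N_H(v)\cap T'|=1$ forces $v\sim u$, so $U_l$ is a clique; for $v\in U_{l'}$ with $l'\ne l$, it forces $v\not\sim u$, so no edges run between different $U_l$'s. Hence $H$ decomposes as the disjoint union of the cliques $\{t_l\}\cup U_l$, and connectedness of $H$ forces $|T|=1$, i.e., $H$ is a clique. Applied to each $V_h$, this gives $\alpha(G[V_h])=1$, $y_h=1/\alpha(G)$, and $k=\alpha(G)$.

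The ``in addition'' claim is then immediate: given an edge $\{i,j\}$ of $G[S]$, both endpoints lie in a common clique $V_h$, and $\{i,j\}\cup\bigcup_{l\ne h}\{v_l\}$ (any choice $v_l\in V_l$) is a stable set in $G\setminus\{i,j\}$ of size $k+1=\alpha(G)+1$, proving $\{i,j\}$ is critical. The main technical obstacle is the connected-full-support-clique lemma; the surrounding steps amount to Motzkin--Straus accounting.
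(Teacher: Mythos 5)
Your proposal is correct. Note that the chapter itself does not prove Theorem \ref{minimizers-ms}; it is only quoted from \cite{LV2021a} (see also \cite{GHPR}), so the comparison can only be with the general route taken in the cited literature, which, like yours, passes through the Motzkin--Straus identification of the zeros of $x^TM_Gx$ in $\Delta_n$ with the minimizers of $x^T(I+A_G)x$. Your argument is a clean, self-contained version of that route: the componentwise splitting, the Engel/Cauchy--Schwarz chain with its equality analysis, and above all the key lemma that a connected graph with a full-support Motzkin--Straus minimizer is a clique, which you prove via the first-order (KKT) identity $(I+A_H)\tilde x=\frac{1}{\alpha(H)}e$ (valid since the constraints are linear and the nonnegativity constraints are inactive at a full-support minimizer) together with the stable-set swapping argument $T\mapsto (T\setminus\{t_l\})\cup\{u\}$. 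I checked the details: the identity $\sum_{i\notin T}\tilde x_i\bigl(|N_H(i)\cap T|-1\bigr)=0$ is correct, the swapped set $T'$ is again a maximum (hence maximal) stable set so the ``exactly one neighbour'' conclusion does apply to it, and the deduced clique decomposition plus connectedness forces $\alpha(H)=1$. The equality analysis then gives $\alpha(G[S])=\alpha(G)=k$ and $y_h=1/\alpha(G)$, and the criticality claim follows once one adds the (standard, and worth stating) inequality $\alpha(G\setminus e)\le \alpha(G)+1$, so that your stable set of size $\alpha(G)+1$ in $G\setminus e$ indeed yields $\alpha(G\setminus e)=\alpha(G)+1$. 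The only other point worth making explicit is that the minimizer property of each rescaled restriction $x|_{V_h}/y_h$ (needed for the KKT identity on $G[V_h]$) comes precisely from equality in the first inequality of your chain; with that said, the proof is complete.
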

In particular, we can characterize  the graphs $G$ for which the form $x^TM_Gx$ has  finitely many zeros in $\Delta_n$. 
\begin{corollary}[\cite{LV2021a}]\label{corzerosMG}
Let $G$ be a graph. The form $x^TM_Gx$ has finitely many zeros in $\Delta_n$ if and only if $G$ is acritical (i.e., $G$ has no critical edge). In that case, the zeros are the vectors of the form $\chi^S/|S|$, where $S$ is a stable set of size $\alpha(G)$.
\end{corollary}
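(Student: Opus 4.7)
The plan is to derive both directions of Corollary~\ref{corzerosMG} as direct consequences of the structural characterization in Theorem~\ref{minimizers-ms}, which describes every zero $x\in\Delta_n$ of $x^TM_Gx$ in terms of the connected components of $G[\supp(x)]$.

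For the ``if'' direction, assume $G$ is acritical and let $x\in\Delta_n$ with $S=\supp(x)$ be a zero of $x^TM_Gx$. By Theorem~\ref{minimizers-ms}, every edge contained in $S$ must be critical; since $G$ has no critical edges, $G[S]$ has no edges at all, so each connected component of $G[S]$ is a single vertex. The same theorem then gives that the number of components equals $\alpha(G)$, forcing $|S|=\alpha(G)$ and showing that $S$ is stable. The condition $\sum_{i\in V_h}x_i=1/\alpha(G)$ applied to each singleton component $V_h=\{i\}$ yields $x_i=1/\alpha(G)$ for all $i\in S$, so $x=\chi^S/|S|$. Since there are only finitely many stable sets of size $\alpha(G)$, this yields a finite zero set of the claimed form.

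For the ``only if'' direction, I show the contrapositive: if $G$ has a critical edge, then $x^TM_Gx$ has infinitely many zeros in $\Delta_n$. Let $e=\{i,j\}$ be critical, so $\alpha(G\setminus e)=\alpha(G)+1$, and pick a maximum stable set $S'$ of $G\setminus e$, with $|S'|=\alpha(G)+1$. Since $S'$ is not stable in $G$, it must contain the edge $e$, so $\{i,j\}\subseteq S'$; moreover, no other edge of $G$ lies in $S'$. Hence $G[S']$ consists of the clique $\{i,j\}$ together with $\alpha(G)-1$ isolated vertices, giving exactly $\alpha(G)$ connected components, each a clique. For every $t\in(0,1/\alpha(G))$, define $x^{(t)}\in\Delta_n$ by
\[
x^{(t)}_i=t,\quad x^{(t)}_j=\tfrac{1}{\alpha(G)}-t,\quad x^{(t)}_v=\tfrac{1}{\alpha(G)} \text{ for } v\in S'\setminus\{i,j\},\quad x^{(t)}_v=0 \text{ otherwise}.
\]
Then $\supp(x^{(t)})=S'$, the component-mass condition $\sum_{v\in V_h}x^{(t)}_v=1/\alpha(G)$ is satisfied on each component, and the only edge in $S'$ is the critical edge $e$. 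By Theorem~\ref{minimizers-ms}, each $x^{(t)}$ is a zero of $x^TM_Gx$, producing a one-parameter family of zeros.

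The main (minor) obstacle is the bookkeeping in the converse direction: one must verify that $S'$ genuinely contains both endpoints of $e$ and no other edge of $G$, so that $G[S']$ has the required structure of one edge plus isolated vertices; this follows immediately from $S'$ being a maximum stable set of $G\setminus e$ of size $\alpha(G)+1$. Once this is in place, Theorem~\ref{minimizers-ms} supplies both directions with no further computation.
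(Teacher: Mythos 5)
Your proposal is correct and follows essentially the same route as the paper, namely reading off both directions from the characterization in Theorem~\ref{minimizers-ms} (the paper's infinite family of zeros for a critical edge is exactly your one-parameter construction, cf.\ the $C_5$ example). The only point worth adding explicitly is that each $\chi^S/|S|$ with $S$ a maximum stable set is indeed a zero, which is immediate from the ``if'' part of Theorem~\ref{minimizers-ms} applied to the singleton components.
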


\begin{example}{Zeros of the form $x^TM_Gx$ for the cycles $C_4$ and $C_5$}
The 4-cycle $C_4$ has vertex set $\{1,2,3,4\}$ and edges $\{1,2\}$, $\{2,3\}$, $\{3,4\}$, and $\{4,1\}$. It  has stability number $\alpha(C_4)=2$, it is acritical, and its maximum stable sets are the sets $\{1,3\}$ and $\{2,4\}$. Then, in view of Corollary \ref{corzerosMG}, the only zeros of the form $x^TM_{C_4}x$ in $\Delta_4$ are $(\frac{1}{2},0,\frac{1}{2}, 0)$ and $(0, \frac{1}{2}, 0, \frac{1}{2})$. 

The 5-cycle $C_5$ has vertex set $\{1,2,3,4,5\}$ and edges $\{1,2\}$, $\{2,3\}$, $\{3,4\}$, $\{4, 5\}$, and $\{5,1\}$. It has stability number $\alpha(C_5)=2$ and it is critical. Then, in view of Theorem \ref{minimizers-ms}, the form $x^TM_{C_5}x$ has infinitely many zeros in $\Delta_5$. For example, for any $t\in (0,1)$, the point $x_t=(\frac{1}{2}, 0, \frac{t}{2}, \frac{1-t}{2},0)$ is a zero supported in the two cliques $\{1\}$ and $\{3,4\}$ (indeed a critical edge). It can be checked that (up to symmetry)  all zeros take the shape of $x_t$ for $t\in [0,1]$.
\end{example}

When $G$ is an acritical graph one can show that its graph matrix $M_G$ belongs to one of the 
cones $\LAS_{\Delta_n}^{(r)}$, thus a stronger result than the result  from Theorem \ref{finite-conv}.

\begin{theorem}[Laurent, Vargas \cite{LV2021a}]\label{finite-acritical}
Let $G$ be an acritical graph. Then we have $M_G\in \bigcup_{r\geq 0}\LAS_{\Delta_n}^{(r)}$.
\end{theorem}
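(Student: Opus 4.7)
The plan is to apply Theorem~\ref{opt-DMD} directly to the graph matrix $M_G = \alpha(G)(I+A_G) - J$. Since $M_G$ is copositive and the quadratic form $x^T M_G x$ vanishes at $u = \chi^S/\alpha(G)$ for any maximum stable set $S$, the matrix $M_G$ lies on $\partial\COP_n$, so Theorem~\ref{opt-DMD} is applicable provided that its two hypotheses hold: (i) $x^TM_Gx$ has finitely many zeros in $\Delta_n$, and (ii) for every such zero $u$, we have $(M_Gu)_i>0$ for all $i\in [n]\setminus\supp(u)$.

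Hypothesis (i) is immediate from Corollary~\ref{corzerosMG}: because $G$ is acritical, the zeros of $x^TM_Gx$ in $\Delta_n$ are exactly the finitely many vectors $u_S := \chi^S/\alpha(G)$, where $S$ ranges over the maximum stable sets of $G$. To verify hypothesis (ii), I would compute $M_Gu_S$ explicitly. Using $Ju_S = e$ (since $u_S\in\Delta_n$) and the fact that $S$ is stable (so $(A_Gu_S)_i = 0$ for $i\in S$), a direct computation gives
\begin{align*}
(M_G u_S)_i =
\begin{cases}
0 & \text{if } i\in S,\\
|N(i)\cap S|-1 & \text{if } i\notin S,
\end{cases}
\end{align*}
where $N(i)$ denotes the neighborhood of $i$. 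Hence hypothesis (ii) reduces to the combinatorial claim that $|N(i)\cap S|\geq 2$ for every $i\notin S$ and every maximum stable set $S$.

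The key step, and I expect the main (though modest) obstacle, is precisely this combinatorial claim; it is where acriticality is used. By maximality of $S$, any $i\notin S$ has at least one neighbor in $S$, so $|N(i)\cap S|\geq 1$. Suppose, for contradiction, that $|N(i)\cap S|=1$, and let $j$ be the unique neighbor of $i$ in $S$. Then in $G\setminus\{i,j\}$ the set $S\cup\{i\}$ is stable (as $i$ loses its only neighbor in $S$), so $\alpha(G\setminus\{i,j\})\geq\alpha(G)+1$. Combined with the trivial inequality $\alpha(G\setminus e)\leq\alpha(G)+1$ for any edge $e$, this shows that $\{i,j\}$ is a critical edge of $G$, contradicting acriticality.

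With both hypotheses of Theorem~\ref{opt-DMD} verified, we conclude that $M_G\in\bigcup_{r\geq 0}\LAS^{(r)}_{\Delta_n}$, as desired. (As a bonus, Theorem~\ref{opt-DMD} yields the same conclusion for every positive diagonal scaling $DM_GD$ with $D\in\MD^n_{++}$.)
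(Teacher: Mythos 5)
Your proposal is correct and follows essentially the same route as the paper: apply Theorem~\ref{opt-DMD} (i.e.\ Nie's finite-convergence criterion), get finiteness of the zeros from Corollary~\ref{corzerosMG}, and verify $(M_Gu_S)_i=|N(i)\cap S|-1>0$ for $i\notin S$ via the observation that acriticality forces $|N(i)\cap S|\ge 2$, which is exactly the paper's Lemma~\ref{lemacritical}. Your computation and the critical-edge contradiction coincide with the paper's argument, so there is nothing to add.
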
 
As $\LAS_{\Delta_n}^{(r)}\subseteq \MK_n^{(r)}$ for any $r\in\oN$, this result implies  finite convergence of the hierarchy of bounds $\vartheta^{(r)}(G)$ to $\alpha(G)$ for the class of acritical graphs.  

The proof of Theorem \ref{finite-acritical}  relies on applying Theorem \ref{theo-Nie}.  By assumption, $G$ is acritical, and thus the quadratic form $x^TM_Gx$ has finitely many zeros in $\Delta_n$, as described in Corollary \ref{corzerosMG}. Now it suffices to verify that the zeros satisfy the conditions of Theorem \ref{theo-Nie}. We next  give the (easy) details for the sake of concreteness.

\begin{lemma}[\cite{LV2021a}]\label{lemacritical}
Let $G$ be an acritical graph and let $S$ be a stable set of size $\alpha(G)$. Then, for $x=\chi^S/\alpha(G)$, we have $(M_Gx)_i>0$ for $i\notin S$.   
\end{lemma}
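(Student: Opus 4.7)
\medskip
\noindent\textbf{Proof plan.}
The plan is to compute $(M_Gx)_i$ explicitly and then use acriticality to rule out the possibility that this entry is zero. First, I would expand using $M_G=\alpha(G)(I+A_G)-J$. Since $S$ has size $\alpha(G)$, we have $\sum_j x_j=1$, so $Jx=e$. Moreover, $\alpha(G)\big((I+A_G)x\big)_i=\chi^S_i+|N(i)\cap S|$, where $N(i)$ denotes the neighborhood of $i$ in $G$. Hence for any $i\in V$,
\begin{equation*}
(M_Gx)_i=\chi^S_i+|N(i)\cap S|-1,
\end{equation*}
and in particular, for $i\notin S$, one obtains $(M_Gx)_i=|N(i)\cap S|-1$.

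Next, I would argue that $|N(i)\cap S|\ge 2$ for every $i\notin S$. Since $S$ is a maximum stable set and $i\notin S$, the set $S\cup\{i\}$ is not stable, so $i$ has at least one neighbor in $S$; the issue is therefore to rule out $|N(i)\cap S|=1$. Suppose, toward a contradiction, that $N(i)\cap S=\{j\}$ for some $j\in S$, and let $e=\{i,j\}$. Then in the graph $G\setminus e$, the vertex $i$ has no neighbor in $S$ anymore, so $S\cup\{i\}$ becomes a stable set in $G\setminus e$ of size $\alpha(G)+1$. Combined with the elementary fact that removing one edge can increase the stability number by at most one (if $T$ is stable in $G\setminus e$ then either $T$ is already stable in $G$, or $T$ contains both endpoints of $e$ and removing one gives a stable set of $G$ of size $|T|-1$), this yields $\alpha(G\setminus e)=\alpha(G)+1$, i.e., $e$ is a critical edge of $G$, contradicting acriticality.

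Therefore $|N(i)\cap S|\ge 2$ for every $i\notin S$, which gives $(M_Gx)_i\ge 1>0$, as desired.

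The computation is routine; the only conceptual step is the equivalence between $|N(i)\cap S|=1$ and criticality of the resulting edge, which is exactly where the acriticality assumption is used. I do not anticipate any real obstacle — the argument is a short combinatorial check once the expression for $(M_Gx)_i$ is written down.
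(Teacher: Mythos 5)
Your proposal is correct and follows essentially the same route as the paper: compute $(M_Gx)_i=|N(i)\cap S|-1$ for $i\notin S$, then use acriticality to rule out $|N(i)\cap S|=1$ (the paper likewise notes that a unique neighbour $j\in S$ would make $\{i,j\}$ a critical edge). Your extra justification that edge removal increases the stability number by at most one is a fine, slightly more explicit way of verifying criticality.
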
 
\begin{proof}
For a vertex $i\in V\setminus S$, let $N_S(i)$ denote the number of neighbours of $i$ in $S$. We have $N_S(i)\geq 1$ because $S\cup\{i\}$ is not stable, as $S$ is a stable set of size $\alpha(G)$. Since $G$ is acritical we must have $N_S(i)\geq 2$. Indeed, if $N_S(i)=1$ and $j\in S$ is the only neighbour of $i$ in $S$, then $\{i,j\}$ is a critical edge, contradicting the assumption on $G$.  Now we compute $(M_Gx)_i$:
\begin{align*}
 (M_Gx)_i&= \frac{1}{\alpha(G)}((\alpha(G)-1)N_S(i) - (\alpha(G)-N_S(i)))\\
 		&=\frac{1}{\alpha(G)}(\alpha(G)N_S(i)-\alpha(G))>0,
 \end{align*}
 where the last inequality holds as $N_S(i)\geq 2$.
\end{proof}
The above strategy does not  extend for general graphs (having some critical edges) and also the result of Theorem~\ref{finite-acritical} does not extend. For example, if $G=C_5$ is the 5-cycle (whose edges are all critical), then $M_G$ is the Horn matrix that does not belong to any of the 
cones  $\LAS_{\Delta_n}^{(r)}$ (as we saw in Section~\ref{section-membership-LAS}). 
Hence  another strategy is needed to show membership of $M_G$ in $\bigcup_r\MK^{(r)}_n$ for general graphs.
We now sketch some of the key ingredients that are used to show this result.

\subsubsection*{Some key ingredients for the proof for Theorem \ref{finite-conv}}For studying Conjecture~1 and, in general, the membership of the graph matrices $M_G$ in the cones $\MK_n^{(r)}$, it turns out that  the graph notion of  isolated nodes plays a crucial role. 

A node $i$ of a graph $G$ is said to be an {\em isolated node} of $G$ if $i$ is not adjacent to any other node of $G$.
Given a graph $G=(V,E)$ and a new node $i_0\not\in V$, the graph $G\oplus i_0$ is the graph $(V\cup\{i_0\},E)$ obtained by adding $i_0$ as an isolated node to $G$. The following result makes the link to Conjecture~1 clear.

\begin{theorem}[Gvozdenovi\'{c}, Laurent \cite{GL2007}]\label{theoGL}
Assume that, for any graph $G=([n],E)$ and $r\in\oN$,  we have
\begin{align}\label{eqimpli}
M_G\in  \MK^{(r)}_n \Longrightarrow M_{G\oplus i_0}\in \MK^{(r)}_{n+1}.
\end{align}
Then Conjecture~1 holds.
\end{theorem}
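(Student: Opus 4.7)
The natural approach is by strong induction on $n = |V(G)|$. For the base case I would rely on Theorem~\ref{alpha-8}, which establishes Conjecture~1 for all graphs with $\alpha(G) \le 8$, and in particular for all graphs on at most 8 vertices.

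For the inductive step, let $G$ be a graph on $n$ vertices with $\alpha(G) = k$, and suppose Conjecture~1 holds for all graphs on strictly fewer than $n$ vertices. The argument splits into two cases depending on whether $G$ contains an isolated vertex. Suppose first that $G$ has an isolated vertex $v$, and set $H := G - v$, the subgraph of $G$ on the remaining $n-1$ vertices. Since $v$ is isolated, it must lie in every maximum stable set of $G$ (extending any stable set of $H$ by $v$ produces a larger one), so $\alpha(H) = k-1$. By the induction hypothesis, $M_H \in \MK^{(k-2)}_{n-1}$, and the hypothesis~(\ref{eqimpli}) applied to $H$ at level $r = k - 2$ then yields $M_G = M_{H \oplus v} \in \MK^{(k-2)}_n \subseteq \MK^{(k-1)}_n$, as desired.

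The remaining case, where $G$ has no isolated vertex, is the main obstacle. The implication~(\ref{eqimpli}) only runs in the forward direction (from $G$ to $G \oplus i_0$) and hence does not directly reduce such a $G$ to a graph on fewer vertices. I expect the proof in \cite{GL2007} to close this case through an additional construction specific to the graph matrices $M_G$: for instance, one might associate to $G$ an auxiliary graph in which an isolated vertex is exposed (or in which the stability number drops appropriately), establish membership at the right level via the induction hypothesis combined with~(\ref{eqimpli}), and then transfer the conclusion back to $G$ through a dedicated structural property of the cones $\MK^{(r)}_n$ going beyond~(\ref{eqimpli}) alone. This is the delicate technical step where the heart of the argument lies.
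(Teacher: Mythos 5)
Your treatment of the isolated-vertex case is fine (if $v$ is isolated then $\alpha(G-v)=\alpha(G)-1$, the induction hypothesis gives $M_{G-v}\in \MK^{(\alpha(G)-2)}_{n-1}$, and (\ref{eqimpli}) transfers this to $G$), but this is not where the content of the theorem lies, and the case you explicitly leave open --- $G$ without isolated vertices --- is a genuine gap rather than a routine detail. The hypothesis (\ref{eqimpli}) only ever produces graphs that \emph{have} an isolated node, so an induction on $n$ split according to the presence of an isolated vertex can never reach, for instance, connected graphs: these exist on every number of vertices, so the base case supplied by Theorem \ref{alpha-8} (graphs on at most $8$ vertices) does not absorb them, and nothing in your scheme says anything about them. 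In other words, as written the argument proves Conjecture~1 only for graphs obtained from small graphs by repeatedly adding isolated nodes.

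The missing ingredient is the actual engine of the proof in \cite{GL2007}: an induction on $\alpha(G)$ (not on $n$) driven by a construction that certifies $M_G\in\MK^{(r)}_n$ from level-$(r-1)$ certificates for the graph matrices of the subgraphs $G\setminus N_G[v]$, $v\in V$, obtained by deleting closed neighborhoods; these have stability number at most $\alpha(G)-1$, so the induction hypothesis applies to them. The catch is that this construction needs those smaller matrices brought back to the ambient size, i.e.\ the subgraphs must be padded with isolated nodes, and padding raises the stability number: Conjecture~1 applied directly to the padded graphs would only give membership at too high an order. This is precisely the point where the assumed implication (\ref{eqimpli}) --- preservation of the \emph{same} level $r$ when an isolated node is added --- is invoked, and it is why the theorem is an implication rather than an unconditional statement. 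Your closing paragraph correctly guesses that some auxiliary construction ``specific to the graph matrices'' is needed, but guessing its existence is not supplying it; without that recursion over the neighborhoods $N_G[v]$ (or an equivalent device), the no-isolated-vertex case cannot be closed from (\ref{eqimpli}) alone. Note also that the chapter itself offers no proof of Theorem \ref{theoGL}; it cites \cite{GL2007}, whose construction is the one sketched above.
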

 Moreover,  it was conjectured in \cite{GL2007} that (\ref{eqimpli}) holds for each $r\in\oN$
  (which, if true,  would thus imply Conjecture~1).
 However, this conjecture was disproved in \cite{LV2021a}. 
 
 \begin{example}{Adding an isolated node may not preserve membership in $\MK^{(r)}$}
 Consider the 5-cycle $C_5$, whose graph matrix coincides with the Hall matrix: $M_{C_5}=H$.
 As we have seen earlier, $M_{C_5}\in \MK^{(1)}_5$. In \cite{LV2021a} it is shown that, if $G=C_5\oplus i_1\oplus \dots \oplus i_8$ is the graph obtained by adding eight isolated nodes to the 5-cycle, then $M_G\in \MK_{13}^{(1)}$,  but, if we add one more isolated node $i_0$ to $G$ (thus we add nine isolated nodes to $C_5$), then we have $M_{G\oplus i_0}\notin \MK_{14}^{(1)}$. 
  \end{example}

Hence, one cannot rely on the result of Theorem \ref{theoGL} and a new strategy is needed for solving Conjecture~1. 
The following variation of Theorem \ref{theoGL} is shown in \cite{LV2021b}, which can serve as a basis for proving a weaker form of Conjecture~1, namely membership of $M_G$ in $\bigcup_r \MK^{(r)}_n$.

\begin{theorem}[Laurent and Vargas \cite{LV2021b}]\label{equiv}
The following two assertions are equivalent.
\begin{description}
\item[(i)] For any graph $G=([n],E)$,  $M_G\in \bigcup_{r\geq0}\MK_{n}^{(r)}$ implies $M_{G\oplus i_0}\in \bigcup_{r\geq0}\MK_{n+1}^{(r)}$.
\item[(ii)] For any graph $G=([n],E), $ we have $M_G\in\bigcup_{r\geq0}\MK_n^{(r)}$.
\end{description}
\end{theorem}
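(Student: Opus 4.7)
The plan is to handle the two directions separately. The implication $(ii) \Rightarrow (i)$ is immediate: under $(ii)$, the matrix $M_{G\oplus i_0}$ always lies in $\bigcup_{r\ge 0} \MK^{(r)}_{n+1}$, so the conclusion of the implication in $(i)$ is automatic and $(i)$ holds trivially.

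For the substantive direction $(i) \Rightarrow (ii)$, I would proceed by strong induction on the number of vertices $n$. The base case $n=1$ is immediate since the unique one-vertex graph has $\alpha(G)=1$ and $M_G=0\in\MK^{(0)}_1$. For the inductive step, I would fix a graph $G$ on $n\ge 2$ vertices and assume that $(ii)$ holds for all graphs on fewer than $n$ vertices. The easy subcase is when $G$ has an isolated vertex $v$: then $G=G'\oplus v$ with $|V(G')|=n-1$, the induction hypothesis gives $M_{G'}\in\bigcup_{r\ge 0}\MK^{(r)}_{n-1}$, and hypothesis $(i)$ applied to $G'$ yields $M_G\in\bigcup_{r\ge 0}\MK^{(r)}_n$.

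The main obstacle is the remaining subcase, in which $G$ has no isolated vertex. Here $(i)$ cannot be used to reduce $M_G$ to a smaller graph matrix, because $G$ is not of the form $G'\oplus i_0$. My plan is to dispose of this case by an independent argument tailored to the structure of $G$. When $G$ is acritical (i.e., has no critical edge), Theorem~\ref{finite-acritical} already delivers the stronger membership $M_G\in\bigcup_{r\ge 0}\LAS^{(r)}_{\Delta_n}\subseteq\bigcup_{r\ge 0}\MK^{(r)}_n$, so this sub-subcase is essentially free. When $G$ has critical edges, I would adapt the Horn-type sum-of-squares construction behind Theorem~\ref{DHD-theorem}, using the precise description of the zeros of $x^TM_Gx$ in $\Delta_n$ (Theorem~\ref{minimizers-ms}) to build an explicit certificate for $(x^{\circ 2})^TM_Gx^{\circ 2}$ on the unit sphere, which via the equality $\bigcup_r\MK^{(r)}_n=\bigcup_r\LAS^{(r)}_{\mathbb S^{n-1}}$ from Theorem~\ref{theo-link} gives membership in some $\MK^{(r)}_n$. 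The hardest single step is producing such a Horn-type certificate uniformly across all critical graphs without isolated vertices on an arbitrary number of vertices; this is the essential technical content, and everything else is routine bookkeeping around the induction on $n$ and the peel-off step using $(i)$.
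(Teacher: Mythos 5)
The trivial parts of your argument are fine: (ii)$\Rightarrow$(i) is indeed immediate, the base case is fine, and the peel-off step (if $G=G'\oplus v$, combine the induction hypothesis for $G'$ with hypothesis (i)) is correct. The genuine gap is the main case. In your induction, hypothesis (i) is used \emph{only} when $G$ happens to contain an isolated vertex; for a graph $G$ with no isolated vertex the induction and the hypothesis give you nothing, and you are left having to prove assertion (ii) for such $G$ outright. Your plan for that case --- ``adapt the Horn-type construction behind Theorem~\ref{DHD-theorem}'' guided by the zero description of Theorem~\ref{minimizers-ms} --- is not a proof step but precisely the open core of the problem. When $G$ has critical edges, the form $x^TM_Gx$ has \emph{infinitely many} zeros in $\Delta_n$ (Corollary~\ref{corzerosMG}), so the finite-zero machinery (Theorem~\ref{theo-Nie}, Theorem~\ref{opt-DMD}) is unavailable; even for the single matrix $H=M_{C_5}$ and its diagonal scalings the only known certificate (Theorem~\ref{DHD-theorem}) required the pure-states machinery of \cite{SV}, and no uniform analogue for arbitrary graphs with critical edges is known. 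Indeed, \cite{SV} did \emph{not} prove Theorem~\ref{finite-conv} by exhibiting such certificates: they proved exactly statement (i) (via Theorem~\ref{isolated-preserve}) and then invoked the equivalence of Theorem~\ref{equiv}. So if your ``hardest single step'' could be carried out, you would be proving (ii) for all graphs without isolated vertices unconditionally, making the equivalence pointless; calling the rest ``routine bookkeeping'' hides an open-problem-sized hole. (A secondary slip: graphs with critical edges need not be critical graphs; one can reduce to the critical case by deleting non-critical edges, since $\MK^{(r)}_n$ is a convex cone containing all nonnegative matrices, but that reduction may itself create isolated vertices and in any case does not remove the main difficulty.)

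The structural lesson is that a correct proof of (i)$\Rightarrow$(ii) must extract leverage from (i) for graphs \emph{without} isolated vertices as well. This is what the argument in \cite{LV2021b} does: the induction there is arranged so that (i) is applied to auxiliary graphs arising from $G$ by vertex deletions, which is exactly the mechanism by which isolated-node additions become relevant for \emph{every} graph (compare the role of the implication (\ref{eqimpli}) in Theorem~\ref{theoGL}). Your proposal never uses (i) outside the trivial peel-off, and that is why it cannot close the main case.
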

This result is used as a crucial ingredient  in \cite{SV} for showing Theorem \ref{finite-conv}; namely, the authors of \cite{SV} show that Theorem \ref{equiv}~(i) holds.
The starting point of their proof is to use the fact that  $\bigcup_{r\geq0}\MK_n^{(r)}=\bigcup_{r\geq0}\LAS^{(r)}_{\mathbb{S}^{n-1}}$ (by Theorem \ref{theo-link}) and then to show that membership of the graph matrices in  $\bigcup_{r\geq0}\LAS^{(r)}_{\mathbb{S}^{n-1}}$ is preserved after adding isolated nodes. Recall that $p_G=(x^{\circ2})^TM_Gx^{\circ2}=\sum_{i,j\in V}x_i^2x_j^2(M_G)_{ij}.$

\begin{theorem}[Schweighofer and Vargas \cite{SV}]\label{isolated-preserve} 
Let $G=([n],E)$ be a graph. Assume that $p_G=\sigma_0 + q(\sum_{i=1}^nx_i^2-1)$ for some $\sigma_0\in \Sigma$ and $q_0\in \mathbb{R}[x_1, \dots, x_n]$. Then $p_{G\oplus i_0}=\sigma_1+q_1(x_{i_0}^2+\sum_{i=1}^nx_i^2-1)$ for some $\sigma_1\in \Sigma$ and $q_1\in \mathbb{R}[x_{i_0}, x_1, \dots, x_n]$.
\end{theorem}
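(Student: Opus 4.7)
The plan is to start from an explicit algebraic identity. Using that $M_{G \oplus i_0}$ agrees with $M_G + (I + A_G)$ on the $V \times V$ block, has diagonal entry $\alpha := \alpha(G)$ at $i_0$, and entries $-1$ in the $i_0$-row and column, a short expansion combined with $p_G(x) = \alpha\,(x^{\circ 2})^T(I+A_G)x^{\circ 2} - s^2$ (where $s := \sum_{i=1}^n x_i^2$) yields the polynomial identity
\[
p_{G \oplus i_0}(x, y) \;=\; \tfrac{\alpha+1}{\alpha}\, p_G(x) \;+\; \tfrac{1}{\alpha}\bigl(s - \alpha y^2\bigr)^2.
\]
Since the second summand is already a square in $\mathbb{R}[x,y]$, the whole task reduces to showing that the hypothesis $p_G \in \Sigma + \langle s-1\rangle$ in $\mathbb{R}[x]$ upgrades to $p_G \in \Sigma + \langle s + y^2 - 1\rangle$ in $\mathbb{R}[x,y]$.

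Substituting $p_G = \sigma_0 + q_0(s-1)$ and $s-1 = (s+y^2-1) - y^2$, this upgrade is equivalent to $\sigma_0(x) - q_0(x)\,y^2 \in \Sigma + \langle s+y^2-1\rangle$. The difficulty is the unstructured term $-q_0\,y^2$: since $q_0$ is only a polynomial (not SOS), there is no manifest way to absorb it. My first attempt would exploit that $p_G$ is homogeneous of degree $4$ and even in each $x_i$: after averaging over $\{\pm 1\}^n$ and matching degrees one may write $p_G = \sigma_0^{(4)} + \sigma_0^{(2)}s + \sigma_0^{(0)}s^2$, where $\sigma_0^{(2k)}$ are the homogeneous parts of an even SOS polynomial $\sigma_0$. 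Modulo $s+y^2-1$ this becomes the appealing form $p_G \equiv \sigma_0^{(4)}(x) + \sigma_0^{(2)}(x)(1-y^2) + \sigma_0^{(0)}(1-y^2)^2$, which would be the SOS $\sum_k\bigl(\phi_k^{(0)}(1-y^2) + \phi_k^{(2)}(x)\bigr)^2$ \emph{if} $\sigma_0$ admitted an SOS decomposition $\sigma_0 = \sum_k(\phi_k^{(0)} + \phi_k^{(2)})^2$ without linear summands. Such a restricted decomposition is not always available---it requires the pointwise Cauchy-Schwarz constraint $\sigma_0^{(2)}(x)^2 \le 4\sigma_0^{(0)}\sigma_0^{(4)}(x)$---so this elementary strategy is incomplete in general.

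Closing that gap is the main obstacle, and is presumably why the paper invokes pure state theory from \cite{BSS}. I would proceed by contradiction: assume $p_{G \oplus i_0} \notin \Sigma + \langle s + y^2 - 1\rangle$, apply Hahn-Banach to obtain a linear functional on $\mathbb{R}[x,y]/\langle s+y^2-1\rangle$ that is nonnegative on $\Sigma$ but strictly negative on $p_{G \oplus i_0}$, and decompose it into pure states---which on a compact real variety correspond (essentially) to point evaluations at zeros of $p_{G \oplus i_0}$ on the $(n+1)$-sphere. The key identity of the first paragraph then lets one push this obstruction down to the $n$-sphere: at a pure state supported near the ``equator'' $y = 0$ the square term vanishes, so a negative value on $p_{G \oplus i_0}$ descends to a negative value on $p_G$, contradicting $p_G \in \Sigma + \langle s-1\rangle$. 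The technical heart---characterizing the pure states adapted to the sphere and verifying the compatibility of the ``$y \to 0$'' restriction with the positivity structure---is where the serious work of \cite{SV} lives and is beyond a quick sketch.
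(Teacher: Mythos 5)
Your opening identity is correct and is a nice observation: with $s=\sum_{i=1}^n x_i^2$, $y=x_{i_0}$ and $\alpha=\alpha(G)$ one indeed has $p_{G\oplus i_0}=\frac{\alpha+1}{\alpha}\,p_G+\frac{1}{\alpha}(s-\alpha y^2)^2$. The fatal step is the next one: you discard the square term and claim the task ``reduces to'' upgrading $p_G\in\Sigma+\langle s-1\rangle$ in $\mathbb{R}[x]$ to $p_G\in\Sigma+\langle s+y^2-1\rangle$ in $\mathbb{R}[x,y]$. That target statement is false in general, so the reduction is a dead end rather than a gap one could hope to close. Take $G=C_5$, so $p_G=p_H$ with $H$ the Horn matrix; then $p_H\in\Sigma+\langle s-1\rangle$ (equivalently $H\in\MK^{(1)}_5$), yet $p_H$, viewed in the six variables $(x,y)$, is exactly the quartic form of the block matrix with blocks $H$ and $0$, which by Proposition \ref{theo-cop} (the leftmost matrix in (\ref{eqexmat})) lies in $\COP_6\setminus\bigcup_r\MK^{(r)}_6$; by Theorem \ref{dKPL-prop} this says precisely that $p_H\notin\Sigma+\langle s+y^2-1\rangle$ in $\mathbb{R}[x,y]$. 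So the square term cannot be thrown away: it is exactly what keeps $p_{G\oplus i_0}$ positive near the north pole $x=0$, $y^2=1$ (where $p_{G\oplus i_0}=\alpha>0$), and the zeros of $p_{G\oplus i_0}$ on the sphere occur where $p_G(x)=0$ and $s=\alpha y^2$, i.e.\ $y^2=1/(\alpha+1)$ --- not near the equator $y=0$ as your later sketch assumes. Your second paragraph (which you yourself flag as incomplete) and the localization step in your third paragraph both inherit this flaw.

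The remaining pure-state sketch is too coarse to count as a proof: pure states in the framework of \cite{BSS} need not be point evaluations (that is precisely the delicate point), no argument is given for why an obstructing functional would be carried by the zeros you describe, and the passage from a state on $\mathbb{R}[x,y]$ modulo $\langle s+y^2-1\rangle$ to a contradiction with $p_G\in\Sigma+\langle s-1\rangle$ in $\mathbb{R}[x]$ (a different quotient, requiring a rescaling of $x$ from $|x|^2=\alpha/(\alpha+1)$ to $|x|^2=1$) is left unargued. For calibration: the chapter itself does not prove Theorem \ref{isolated-preserve}; it only records that the proof uses the theory of pure states as in \cite{BSS} and defers entirely to \cite{SV}, explicitly because the details are beyond its scope. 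So your instinct about the general toolkit matches the cited source, but the concrete route you propose reduces the theorem to a statement the paper itself refutes, and the part that would have to do the real work is not carried out.
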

Here too, the proof of this theorem uses the theory of pure states in real algebraic geometry (as described in \cite{BSS}).
 The technical details are too involved and thus go beyond the scope of this chapter,  we refer to \cite{SV} for the full details. As explained above, this theorem  implies Theorem \ref{finite-conv}.  The result (and proof) of Theorem \ref{isolated-preserve}, however, does not give any explicit bound on the degree of $\sigma_1$ in terms of the degree of $\sigma_0$. Hence one cannot infer any information on the degree of a representation of $p_G$ in $\Sigma+ I(\sum_{i=1}^nx_i^2-1)$. In other words, this result gives no information on the number of steps at which finite convergence of $\vartheta^{(r)}(G)$ to $\alpha(G)$ takes place.

\smallskip
Therefore, the status of Conjecture~1 remains widely open and its resolution likely requires new techniques. There is some evidence for its validity; for instance, Conjecture~1 holds for perfect graphs and for graphs $G$ with $\alpha(G)\le 8$ (Theorem~\ref{theoGL}), and any graph matrix $M_G$ belongs to some cone $\MK^{(r)}_n$ (Theorem \ref{finite-conv}). These facts also make the search for a possible counterexample a rather difficult task. 

\section{Concluding remarks}\label{secfinal}
In this chapter we have discussed several hierarchies of conic inner approximations for the copositive cone $\COP_n$, motivated by various sum-of-squares  certificates for positive polynomials on $\oR^n$, $\oR^n_+$, the simplex $\Delta_n$, and the unit sphere $\mathbb S^{n-1}$. 
The main players are Parrilo's  cones $\MK^{(r)}_n$, originally defined as the sets of matrices $M$ for which the polynomial $(\sum_{i=1}^nx_i^2)^r (x^{\circ 2})^TMx^{\circ 2}$ is a sum of squares of polynomials, thus  having a  certificate ``with denominator" (for positivity on $\oR^n$). The question whether these cones cover the full copositive cone is completely settled: the answer is positive for $n\le 5$ and negative for $n\ge 6$. The cones $\MK^{(r)}_n$ also capture the class of copositive graph matrices, of the form $M_G=\alpha(G)(A_G+I)-J$ for some graph~$G$.  
The challenge in settling these questions lies in the fact that, for any copositive matrix lying on the border of $\COP_n$, the associated  form has (nontrivial) zeros (and thus is not {\em strictly} positive), so that the classical positivity certificates do not suffice to claim membership in the conic approximations, and thus other techniques are needed.

A useful step is understanding the links to  other certificates ``without denominators" for positivity on the simplex or the sphere, which lead to the Lasserre-type cones $\LAS^{(r)}_{\Delta_n}$ and $\LAS^{(r)}_{\mathbb S^{n-1}}$.  Roughly speaking, the simplex-based cones form a weaker hierarchy, while the sphere-based cones provide an equivalent formulation for Parrilo's cones (see Theorem \ref{theo-link} and relation (\ref{link-rep}) for the exact relationships). Membership in the simplex-based cones can be shown for some  classes of copositive matrices, which thus implies membership in Parrilo's cones. 

We recall Conjecture 1 that asks whether any graph matrix $M_G$ belongs to the cone $\MK^{(r)}_n$ of order $r=\alpha(G)-1$, still widely open for graphs with $\alpha(G)\ge 9$.  The resolution of Conjecture~1 would offer an interesting  result that is relevant to the intersection of combinatorial optimization (about the computation of $\alpha(G)$), matrix copositivity (membership of a class of structured copositive matrices in one of Parrilo's approximation cones),   and real algebraic geometry (a sum-of-squares representation result with an explicit degree bound for a polynomial with zeros).

Matrix copositivity revolves around the question of deciding   whether a quadratic form  is nonnegative on $\oR^n_+$. This fits, more generally, within the study of copositive tensors, thus going from quadratic forms to forms with degree $d\ge 2$. There is a wide literature on  copositive tensors; we refer, e.g., to \cite{NieYangZhang,Qi2013,SongQi} and further references therein. 
 The relationships between the various types of positivity certificates discussed in this chapter  for the case $d=2$ 
extend to the case $d\ge 2$. (Note indeed that Theorems \ref{theoZVP} and \ref{dKPL-prop} hold for general homogeneous polynomials.) An interesting research direction may be to understand classes of structured symmetric tensors that are captured by some of the corresponding conic hierarchies.

\smallskip\noindent

\end{document}